\theoremstyle{plain}
\newtheorem{prop}{Proposition}
\numberwithin{prop}{section}
\newtheorem{thm}[prop]{Theorem}
\newtheorem{cor}[prop]{Corollary}
\newtheorem{lmm}[prop]{Lemma}
\newenvironment{mthm}[1]
  {\innermthm}
  {\endinnermthm}
\theoremstyle{definition}
\newtheorem{rmk}[prop]{Remark}
\newtheorem{ex}[prop]{Example}
\newtheorem{defi}[prop]{Definition}
\newtheorem{claim}{Claim}
\newtheorem*{claim*}{Claim}
\newcommand{\st}{\ensuremath{\:\middle\vert\:}}
\renewcommand{\emptyset}{\varnothing}
\DeclareMathOperator{\scl}{scl}
\DeclareMathOperator{\lk}{Lk}
\DeclareMathOperator{\CAT}{CAT}
\DeclareMathOperator{\sign}{sign}
\DeclareMathOperator{\tot}{tot}
\DeclareMathOperator{\intr}{int}
\title[From letter-quasimorphisms to angle structures]{From letter-quasimorphisms to angle structures and spectral gaps for scl}
\author{Alexis Marchand}
\date{\today}
\address{DPMMS, Centre for Mathematical Sciences, Wilberforce Road, Cambridge CB3 0WB, United Kingdom}
\email{\href{mailto:aptm3@cam.ac.uk}{aptm3@cam.ac.uk}}
\begin{document}

\begin{abstract}
    We give a new geometric proof of a theorem of Heuer showing that, in the presence of letter-quasimorphisms (which are analogues of real-valued quasimorphisms with image in free groups), and in particular in RAAGs, there is a sharp lower bound of $\frac{1}{2}$ for stable commutator length.
    Our approach is to show that letter-quasimorphisms give rise to negatively curved angle structures on admissible surfaces.
    This generalises Duncan and Howie's proof of the $\frac{1}{2}$-lower bound in free groups, and can also be seen as a version of Bavard duality for letter-quasimorphisms.
\end{abstract}

\maketitle
    
\section{Introduction}

    Stable commutator length, or $\scl$, is an invariant of group elements that measures their homological complexity.
    Topologically, given a group $G$ realised as the fundamental group of a space $X$, and given a homologically trivial element $g\in G$, the \emph{stable commutator length} of $g$ is defined as
    \[
        \scl(g)\coloneqq\inf_{f,\Sigma}\frac{-\chi^-(\Sigma)}{2n(\Sigma)},
    \]
    where the infimum is over all \emph{admissible surfaces}, i.e. maps $f:\left(\Sigma,\partial\Sigma\right)\rightarrow\left(X,g\right)$ from oriented compact surfaces, bounding $g^{n(\Sigma)}$ for some $n(\Sigma)\in\mathbb{N}$ --- see \S{}\ref{sec:scl} for a precise definition.
    
    Dually, Bavard \cite{bavard} showed that $\scl$ can be interpreted in terms of quasimorphisms:
    \[
        \scl(g)=\sup_{\phi}\frac{\phi(g)}{2D(\phi)},
    \]
    where the supremum is over all maps $\phi:G\rightarrow\mathbb{R}$ which satisfy
    \[
        D(\phi)\coloneqq\sup_{a,b\in G}\left|\phi(ab)-\phi(a)-\phi(b)\right|<\infty,
    \]
    and $\phi\left(a^n\right)=n\phi(a)$ for all $a\in G$ and $n\in\mathbb{Z}$ --- such maps are called \emph{homogeneous quasimorphisms}.
    See Calegari's book \cite{cal-scl}*{\S{}2.5} for more on Bavard duality.
    
    This paper is about proving \emph{lower bounds} for $\scl$.
    A guiding thread is that $\scl$ takes large values in the presence of negative curvature.
    So-called spectral gaps (defined in \S{}\ref{subsec:specgaps}) have been discovered in groups with various notions of hyperbolicity or non-positive curvature, including Gromov-hyperbolic groups \cite{calegari-fujiwara}, mapping class groups \cite{bbf}, $3$-manifold groups \cite{chen-heuer}, and certain amalgamated free products \cites{cfl,chen-specgaps}.
    Of particular importance is the following:
    \begin{thm}[Duncan--Howie \cite{duncan-howie}]
        Suppose that there is a morphism $\varphi:G\rightarrow F$, with $F$ a free group, such that $\varphi(g)\neq1$.
        Then $\scl(g)\geq\frac{1}{2}$.
        
        In particular, (residually) free groups satisfy a strong spectral gap for $\scl$.
    \end{thm}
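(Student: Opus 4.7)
The plan is to reduce the theorem to the free group case and then appeal to the paper's main result---a Bavard-type duality for letter-quasimorphisms.

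First, I would observe that $\scl$ is monotonically non-increasing under group homomorphisms: composing an admissible map $f:(\Sigma,\partial\Sigma)\rightarrow(X,g)$ with a realisation of $\varphi$ yields an admissible map for $\varphi(g)$ with the same ratio $-\chi^-(\Sigma)/(2n(\Sigma))$, so $\scl_G(g)\geq\scl_F(\varphi(g))$. It therefore suffices to prove $\scl_F(h)\geq\frac{1}{2}$ for every nontrivial element $h$ of a free group $F$.

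For the free group bound, I would take the identity $\id_F:F\rightarrow F$ as the prototypical letter-quasimorphism, of minimal defect. The paper's Bavard-type duality for letter-quasimorphisms should give a lower bound on $\scl$ proportional to an appropriate ``norm'' of $\phi(h^n)/n$ divided by the defect $D(\phi)$. Applied to $\phi=\id_F$ and any $h\neq1$, this yields $\scl_F(h)\geq\frac{1}{2}$ immediately.

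The crux, and the main obstacle, is establishing this duality. Geometrically, given an admissible surface $\Sigma$ for $h^n$ and a letter-quasimorphism $\phi$, I would lift $\partial\Sigma$ to the Cayley tree of the target free group and use the combinatorics of the lift to assign angles to the corners of $\Sigma$. The angle assignment should be arranged so that \emph{(i)} each interior vertex carries total angle at most $2\pi$ (non-positive curvature), and \emph{(ii)} the boundary contributions sum correctly along each boundary component. A combinatorial Gauss--Bonnet argument then converts these local conditions into the global bound $-\chi^-(\Sigma)\geq n$, i.e.\ $-\chi^-(\Sigma)/(2n)\geq\frac{1}{2}$. Designing the angle structure to satisfy both conditions simultaneously is the technical heart of the argument, generalising Duncan and Howie's original combinatorial manipulation to the letter-quasimorphism setting.
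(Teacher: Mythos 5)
Your overall strategy --- reduce to the free group by monotonicity of $\scl$ under homomorphisms, then invoke the paper's letter-quasimorphism machinery --- is indeed how this theorem sits inside the paper: Duncan--Howie is recovered as a corollary of Theorem \ref{thm:exist-ang-str} because (residually) free groups are quasi-residually free in the sense of Definition \ref{def:qrf}. The monotonicity step is correct. But the key step you propose, ``take the identity $\id_F$ as the prototypical letter-quasimorphism,'' does not work: by Definition \ref{def:let-qm} a letter-quasimorphism takes values in the set $\mathcal{A}$ of \emph{alternating} words in $F_2=F(a,b)$, and the identity of a free group is not such a map --- even for $F=F_2$, the word $a^2$ is not alternating, and for higher rank there is no map to $F_2$ at all. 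The correct input is the sign-type letter-quasimorphism of Heuer recalled in the paper, $a^{m_1}b^{n_1}\cdots a^{m_k}b^{n_k}\mapsto a^{\sign(m_1)}b^{\sign(n_1)}\cdots a^{\sign(m_k)}b^{\sign(n_k)}$, and one must still verify the hypothesis $\Phi\left(h^n\right)=\Phi(h)^n$ for all $n$, which fails for arbitrary $h$ (e.g.\ $h=aba^{-1}$: $\Phi\left(h^2\right)=aba^{-1}\neq\Phi(h)^2$); this is exactly why some work (conjugating to a cyclically reduced representative, arranging the first and last letters to lie in different generators, possibly passing to a suitable rank-two quotient or subgroup) is needed, and it is the content of Heuer's Example 4.2 cited in the paper rather than something that comes for free.

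Two smaller points. First, the paper's result is not a quantitative ``norm over defect'' Bavard duality: there is no defect $D(\Phi)$ attached to a letter-quasimorphism here, only the all-or-nothing conclusion $\scl(g)\geq\frac{1}{2}$ under the hypotheses $\Phi(g)\neq1$ and $\Phi\left(g^n\right)=\Phi(g)^n$, so your appeal to ``minimal defect'' has no referent. Second, your final paragraph re-sketches the proof of that duality itself (angle assignments on an admissible surface plus combinatorial Gau\ss--Bonnet); that is precisely the paper's Theorem \ref{thm:exist-ang-str} and may simply be cited --- but as a proof of Duncan--Howie it is only a sketch, since the genuine technical content (the stripe decomposition induced by $\Phi$, the hexagons from the non-degenerate case, the angles $\theta_{i,j}$, and Lemma \ref{lmm:bnd-labels}) is not reproduced. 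So the reduction is sound in outline, but the proposal as written has a gap at the choice of letter-quasimorphism and the verification of its homogeneity on powers.
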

    \emph{Strong spectral gaps} are sharp spectral gaps at $\frac{1}{2}$ with an additional assumption of non-vanishing of $\scl$ outside the trivial element (see Definition \ref{def:str-sp-gap}).
    Strong spectral gaps are inherited by subgroups, so they could reveal information about the subgroup structure of certain groups.
    
    The Duncan--Howie Theorem was generalised by Heuer \cite{heuer-raags}, who showed that right-angled Artin groups (RAAGs) also satisfy a strong spectral gap for $\scl$.
    A remarkable corollary is that the fundamental group of any special cube complex has a strong spectral gap.
    Other authors had previously studied spectral gaps in RAAGs, including Fern\'{o}s et al. \cite{fft}, and Forester et al. \cite{fst}.
    
    In order to obtain a strong spectral gap for RAAGs, Heuer \cite{heuer-raags} introduced \emph{letter-quasimorphisms}, which are analogues of quasimorphisms with image in free groups (see Definition \ref{def:let-qm}).
    We will say that a group is \emph{quasi-residually free} if every element admits a letter-quasimorphism with sufficiently non-trivial image (see Definition \ref{def:qrf}); examples include residually free groups and RAAGs.
    Heuer's main result is the following:
    \begin{thm}[Heuer \cite{heuer-raags}]\label{thm:heuer}
        Suppose that there is a letter-quasimorphism $\Phi:G\rightarrow\mathcal{A}$ with $\Phi(g)\neq1$ and $\Phi\left(g^n\right)=\Phi(g)^n$ for all $n\in\mathbb{Z}$.
        Then $\scl(g)\geq\frac{1}{2}$.
        
        In particular, quasi-residually free groups satisfy a strong spectral gap for $\scl$.
    \end{thm}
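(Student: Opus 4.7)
The plan is to give a geometric proof following the strategy of Duncan--Howie, with the free-group tree arguments replaced by the tree-geometry of the free target $\mathcal{A}$ of the letter-quasimorphism $\Phi$. Given an admissible surface $f : (\Sigma, \partial\Sigma) \to (X, g)$ with $\partial$ representing $g^{n(\Sigma)}$, the goal is to prove $-\chi^-(\Sigma) \geq n(\Sigma)$, which upon division by $2n(\Sigma)$ delivers $\scl(g) \geq 1/2$. The approach will be a combinatorial Gauss--Bonnet argument applied to a non-positively curved angle structure on $\Sigma$ constructed from $\Phi$.

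Concretely, after reducing $f$ to a combinatorial map --- a cellulation of $\Sigma$ with edges labelled by elements of $G$, and the boundary decomposed into $n(\Sigma)$ arcs each labelled by $g$ --- the central step is to assign non-negative real corner angles at each $2$-cell corner using $\Phi$. At a corner between two adjacent edges labelled by $a$ and $b$ in $G$, the angle should measure the tree-theoretic ``turning'' between the words $\Phi(a)$ and $\Phi(b)$ at their meeting point inside the Cayley tree of $\mathcal{A}$. The defining bounded-cancellation property of a letter-quasimorphism, combined with the triviality of the product of labels around any interior vertex or around the boundary of any face, should ensure that the resulting angle structure is non-positively curved in the combinatorial sense: angle sums at interior vertices are at most $2\pi$, and angle sums around $k$-gon faces are at most $(k-2)\pi$.

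The hypothesis $\Phi(g^n) = \Phi(g)^n$ is what controls the boundary term: it forces zero cancellation between consecutive copies of $g$ along a boundary component, pinning down the corner angles on $\partial\Sigma$ to specific extremal values. Feeding this into combinatorial Gauss--Bonnet, the non-positive interior curvature provides an inequality between $\chi(\Sigma)$ and the sum of boundary corner contributions, and the pinned boundary angles then yield exactly $-\chi(\Sigma) \geq n(\Sigma)$.

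The main obstacle is the construction and verification in the second paragraph: defining the corner angles so that the non-positive curvature inequalities come out with the \emph{sharp} constants, yielding $\scl(g) \geq 1/2$ rather than some defect-dependent weaker bound $1/C(\Phi)$. This should exploit the integer-valued nature of the letter-quasimorphism's target --- cancellation lengths between words in a free group are integers, so any non-trivial turning at a corner must contribute at least a unit-sized amount of curvature --- which is precisely what makes the sharp constant $1/2$ available here (in contrast to the real-valued Bavard duality setting, where the defect appears linearly in the bound). A secondary subtlety is verifying that the angle structure behaves well under refinement of the cellulation, most likely handled by restricting attention to a minimal-complexity combinatorial representative of the surface.
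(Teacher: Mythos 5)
Your high-level strategy is exactly the paper's: build an angle structure on a reduced admissible surface from the letter-quasimorphism and convert a curvature bound into $-\chi(\Sigma)\geq n(\Sigma)$ via combinatorial Gau\ss--Bonnet. But the proposal stops where the actual work starts, and the construction you gesture at would not close the argument as stated. You propose to define corner angles as the ``tree-theoretic turning'' between $\Phi$-images of adjacent edge labels and assert that the letter-quasimorphism property ``should ensure'' non-positive curvature at faces and interior vertices. The defining condition of a letter-quasimorphism only controls products of \emph{two} group elements, so before any such local control is usable you must reduce all $2$-cells to triangles and then transfer the cancellation data $\Phi(g_1)=u^{-1}x_1v$, $\Phi(g_2)=v^{-1}x_2w$, $\Phi(g_1g_2)^{-1}=w^{-1}x_3u$ into the surface itself; in the paper this is done by subdividing $1$-handles into $a$- and $b$-stripes, extending them across triangular cellular discs (leaving one hexagon in the non-degenerate case), correcting the boundary labelling so each boundary component reads a power of $\Phi(g)$, and unzipping degenerate discs so the region boundaries form a $1$-manifold. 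Nothing in your sketch produces this decomposition, and without it there is no well-defined place to put the angles, nor any mechanism to verify non-positive curvature of the resulting regions. In particular, ruling out a positively curved disc region requires knowing that every $a$- or $b$-region with at least two boundary arcs has two arcs with opposite orientations, which rests on the constraint $x_1x_2x_3\in\{a^{\pm1},b^{\pm1}\}$ in the definition; your sketch never uses this part of the hypothesis.

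The second gap is the sharp boundary count. You claim $\Phi(g^n)=\Phi(g)^n$ ``pins'' boundary angles and that integrality of cancellation lengths yields the constant $\frac12$. In the paper, homogeneity is used only to ensure the boundary label is a positive power of $\Phi(g)=a_1b_1\cdots a_\ell b_\ell$, so that each boundary arc carries a cyclic index $i\in\{1,\dots,\ell\}$; the angles at transition vertices are then the purely combinatorial values $\theta_{i,j}=2\pi$ if $i\geq j$ and $0$ if $i<j$, and the total defect $-2\pi\cdot n(\Sigma)$ arises by counting, across all regions and vertex discs, the index descents at the wrap-around $\ell\to1$ --- exactly $n(\Sigma)$ of them. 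This global cyclic bookkeeping (not a local ``unit of turning'' per corner) is what produces the sharp constant, and it is absent from the proposal. So the plan is the right kind of plan, but the two steps you defer --- the stripe/hexagon decomposition implementing the letter-quasimorphism data on the surface, and the index-based angle assignment with its counting argument --- are the proof, and they are missing.
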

    Heuer's Theorem can be seen as a coarse version of the Duncan--Howie Theorem; it can also be thought of as a kind of Bavard duality for letter-quasimorphisms.
    
    The main goal of this paper is to obtain Theorem \ref{thm:heuer} from a geometric route that makes apparent the role played by negative curvature.
    Heuer's strategy of proof \cite{heuer-raags} is to use letter-quasimorphisms to construct real-valued quasimorphisms and apply classical Bavard duality.
    Our approach is to show that, in the presence of letter-quasimorphisms, any admissible surface admits a cellular decomposition giving rise to a certain negatively curved geometric structure.
    This geometric structure is an \emph{angle structure} in the sense of Wise \cite{wise:ang} --- see \S{}\ref{sec:ang-str} below for a definition.
    We give an upper bound on the curvature of admissible surfaces for this angle structure, which yields an estimate of their Euler characteristic via the Combinatorial Gauß--Bonnet Formula.
    This approach can be seen as a geometric reinterpretation of Chen's linear programming duality method \cites{chen,chen-specgaps,chen-heuer,chen:kervaire} --- see Remark \ref{rmk:lp-dual}.
    
    This is the content of our main theorem:
    
    \begin{mthm}{\ref{thm:exist-ang-str}}
        Let $X$ be a connected $2$-complex and let $g\in\pi_1X\smallsetminus1$.
        Assume that there is a letter-quasimorphism $\Phi:\pi_1X\rightarrow\mathcal{A}$ with $\Phi(g)\neq1$ and $\Phi\left(g^n\right)=\Phi(g)^n$ for all $n\in\mathbb{Z}$.
        
        Then given a monotone, incompressible, disc- and sphere-free admissible surface $f:(\Sigma,\partial\Sigma)\rightarrow(X,g)$ with $f_*\left[\partial\Sigma\right]=n(\Sigma)\left[S^1\right]$ for some $n(\Sigma)\in\mathbb{N}_{\geq1}$, there is an angle structure on $\Sigma$ whose total curvature satisfies
        \[
            \kappa\left(\Sigma\right)\leq-2\pi\cdot n(\Sigma).
        \]
        
        In particular, $\scl(g)\geq \frac{1}{2}$.
    \end{mthm}
    
    There are several advantages to our geometric route.
    One of them is that it emphasizes the kinship of Heuer's Theorem with the Duncan--Howie Theorem on the one hand, and the Bavard Duality Theorem on the other hand.
    Indeed, Duncan and Howie's proof \cite{duncan-howie} uses a language that is very close to Wise's angle structures, and our argument is of a similar flavour.
    Likewise, Bavard's work \cite{bavard} shows how the data of a quasimorphism relates to topological information on admissible surfaces, and our approach generalises this to letter-quasimorphisms.
    Hence, our proof brings together three of the most important theorems to date on lower bounds for $\scl$.
    The hope is that, by bringing negative curvature to the forefront, our method could lead to further spectral gap results in other hyperbolic-like settings.
    
    \subsection*{Outline of the paper} We start by recalling the topological definition of $\scl$ and a few basic facts about the reduction of admissible surfaces in \S{}\ref{sec:scl}.
    We then go on to define letter-quasimorphisms in \S{}\ref{sec:let-qm}, following Heuer \cite{heuer-raags}.
    In \S{}\ref{sec:ang-str}, we introduce angle structures, following Wise \cite{wise:ang}.
    The core of the paper is \S{}\ref{sec:lqm-ang}, in which we explain how letter-quasimorphisms give rise to negatively curved angle structures on admissible surfaces, leading to Theorem \ref{thm:exist-ang-str}.
    
    \subsection*{Acknowledgements}
    As always, my gratitude goes to Henry Wilton for his constant support and enthusiasm, and for the always helpful mathematical discussions.
    Thanks are also due to Lvzhou Chen for his comments and interest in this work, to the anonymous referee for helpful suggestions, and to Clara Löh and Oscar Randal-Williams, whose comments on the author's PhD thesis \cite{m:phd} have also led to improvements of the present paper.
    This work was financially supported by an EPSRC PhD studentship.
    
\section{Stable commutator length}\label{sec:scl}
    
    \subsection{Topological definition}
    
    We briefly recall the topological definition of $\scl$ --- for a detailed treatment, we refer the reader to Calegari's monograph \cite{cal-scl}.
    
    Let $G$ be a group, and let $X$ be a connected topological space with $\pi_1X=G$.
    Pick an element $g\in G$, whose conjugacy class corresponds to the free homotopy class of a loop $\gamma:S^1\rightarrow X$.
    Note that the circle $S^1$ is oriented: we have a well-defined fundamental class $\left[S^1\right]\in H_1\left(S^1\right)$.
    An \emph{admissible surface} for $g$ in $X$ is the data of an oriented compact surface $\Sigma$, and of continuous maps $f:\Sigma\rightarrow X$ and $\partial f:\partial\Sigma\rightarrow S^1$, such that the following diagram commutes:
    \begin{equation*}
        \vcenter{\hbox{\begin{tikzpicture}[every node/.style={draw=none,fill=none,rectangle}]
            \node (A) at (0,1.25) {$\partial\Sigma$};
            \node (B) at (2,1.25) {$\Sigma$};
            \node (Ap) at (0,0) {$S^1$};
            \node (Bp) at (2,0) {$X$};
            
            \draw [->] (A) -> (B) node [midway,above] {$\iota$};
            \draw [->] (Ap) -> (Bp) node [midway,above] {$\gamma$};
            \draw [->] (A) -> (Ap) node [midway,left] {$\partial f$};
            \draw [->] (B) -> (Bp) node [midway,left] {$f$};
        \end{tikzpicture}}}
        \label{eq:comm-diag-adm-surf}
    \end{equation*}
    where $\iota:\partial\Sigma\hookrightarrow\Sigma$ is the inclusion.
    Such an admissible surface will be denoted by $f:\left(\Sigma,\partial\Sigma\right)\rightarrow\left(X,g\right)$.
    
    The complexity of a connected surface $\Sigma$ is measured by its \emph{reduced Euler characteristic} $\chi^-\left(\Sigma\right)\coloneqq\min\left\{0,\chi(\Sigma)\right\}$.
    If $\Sigma$ is disconnected, we set $\chi^-(\Sigma)\coloneqq\sum_K\chi^-(K)$, where the sum is over all connected components of $\Sigma$.
    
    \begin{defi}
        The \emph{stable commutator length} of an element $g\in G$ is
        \[
            \scl(g)\coloneqq\inf_{f,\Sigma}\frac{-\chi^-(\Sigma)}{2n(\Sigma)},
        \]
        where the infimum is over all admissible surfaces $f:(\Sigma,\partial\Sigma)\rightarrow(X,g)$ with $\partial f_*\left[\partial\Sigma\right]=n(\Sigma)\left[S^1\right]$ in $H_1\left(S^1\right)$, for some $n(\Sigma)\in\mathbb{N}_{\geq1}$.
        We agree that $\inf\emptyset=\infty$.
    \end{defi}
    
    \subsection{Spectral gaps}\label{subsec:specgaps}
    This paper is concerned with lower bounds for $\scl$.
    More precisely, we will say that a group $G$ has a \emph{spectral gap} if there is a constant $\varepsilon>0$ such that any $g\in G$ with $\scl(g)>0$ has $\scl(g)\geq\varepsilon$.
    
    It is well-known that in any group $G$, a non-trivial commutator always has $\scl\leq\frac{1}{2}$ (because a commutator is always bounded by a torus with one boundary component).
    Hence, the best spectral gap that can occur when $\scl$ does not vanish on the derived subgroup is $\varepsilon=\frac{1}{2}$, which motivates the following definition:
    
    \begin{defi}\label{def:str-sp-gap}
        A group $G$ is said to have a \emph{strong spectral gap} if
        \[
            \forall g\in G\smallsetminus1,\:\scl(g)\geq\frac{1}{2}.
        \]
    \end{defi}
    
    It is notable that the property of having a strong spectral gap descends to subgroups by monotonicity of $\scl$.
    
    Groups that are known to have a strong spectral gap include residually free groups by work of Duncan and Howie \cite{duncan-howie}.
    Heuer \cite{heuer-raags} gave a far-ranging generalisation of the Duncan--Howie Theorem, proving that groups with enough letter-quasimorphisms (including all right-angled Artin groups and their subgroups) have a strong spectral gap.
    The goal of this paper is to give an alternative account of Heuer's Theorem, with a more topological proof that is closer in spirit to Duncan and Howie's.
    
    \subsection{Reduction of admissible surfaces}
    It is standard to assume that admissible surfaces for $\scl$ have certain topological properties:
    
    \begin{defi}
        An admissible surface $f:(\Sigma,\partial\Sigma)\rightarrow(X,g)$ is said to be
        \begin{itemize}
            \item \emph{Disc- and sphere-free} if no connected component of $\Sigma$ is a disc or a sphere,
            \item \emph{Incompressible} if every simple closed curve in $\Sigma$ with nullhomotopic image in $X$ is nullhomotopic in $\Sigma$,
            \item \emph{Monotone} if the restriction of $\partial f:\partial\Sigma\rightarrow S^1$ to every boundary component of $\Sigma$ has positive degree.
        \end{itemize}
    \end{defi}
    
    \begin{prop}[Reduction]\label{prop:monot-incompr}
        Consider an admissible surface $f:(\Sigma,\partial\Sigma)\rightarrow(X,g)$ with $\partial f_*\left[\partial\Sigma\right]=n(\Sigma)\left[S^1\right]$ for some $n(\Sigma)\in\mathbb{N}_{\geq1}$.
        
        Then there is a monotone, incompressible, disc- and sphere-free admissible surface $f':\left(\Sigma',\partial\Sigma'\right)\rightarrow(X,g)$ with $\partial f'_*\left[\partial\Sigma'\right]=n\left(\Sigma'\right)\left[S^1\right]$ for some $n\left(\Sigma'\right)\in\mathbb{N}_{\geq1}$, and such that
        \[
            \frac{-\chi^-\left(\Sigma'\right)}{2n\left(\Sigma'\right)}\leq\frac{-\chi^-\left(\Sigma\right)}{2n(\Sigma)}.
        \]
    \end{prop}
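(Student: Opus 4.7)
The plan is to reduce $\Sigma$ via three kinds of surgeries, each weakly decreasing the ratio $-\chi^-(\Sigma)/2n(\Sigma)$ while preserving admissibility: compression of essential null-homotopic simple closed curves, removal of disc and sphere components, and capping or tubing of boundary components of non-positive degree. These are applied in turn and iterated as needed; termination follows because any nontrivial surgery strictly decreases a natural complexity such as genus plus number of boundary components.

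For the compression step, let $c\subset\Sigma$ be a simple closed curve essential in $\Sigma$ but with $f(c)$ null-homotopic in $X$. Since $\Sigma$ is orientable, $c$ is two-sided and has a product neighbourhood; I would cut $\Sigma$ along $c$ and fill in the two resulting circles with discs. Because $f|_c$ is null-homotopic in $X$, the map $f$ extends over these discs. This leaves $\partial\Sigma$ and $\partial f$ unchanged, so $n$ is preserved, while $\chi$ increases by $2$, hence $-\chi^-$ weakly drops. Iterating yields incompressibility.

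For the disc- and sphere-removal step, sphere components contribute nothing to either $\chi^-$ or to $n$. A disc component has its boundary mapping to $\gamma^d$ for some $d\in\mathbb{Z}$, and this loop must be null-homotopic in $X$ since the disc provides a null-homotopy; assuming $g$ has infinite order (the torsion case yields $\scl(g)=0$ trivially), this forces $d=0$, so the disc can be discarded without changing $n$.

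For monotonisation, a boundary component $c_i$ with $\partial f|_{c_i}$ of degree $0$ has $f|_{c_i}$ null-homotopic in $X$ and can be capped with a mapping disc, which adds $1$ to $\chi$ and leaves $n$ fixed. To eliminate boundary components of negative degree, the standard device is to pair each with positive-degree components and glue in annuli of matching degree, possibly after passing to disjoint unions of copies of $\Sigma$ in order to balance signed degrees; since an annulus has $\chi=0$, this preserves $\chi$ and cancels positive against negative contributions, leaving $n$ unchanged or smaller. The main obstacle, in my view, is executing the monotonisation cleanly: one has to match degrees carefully and verify that the resulting tubed surface remains compatible with $f$ and with the commutative diagram defining admissibility, and one must re-apply the earlier steps, because monotonisation may create new disc components or new essential curves with null-homotopic image in $X$ that break the preceding two properties.
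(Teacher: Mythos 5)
Your overall route is the same as the paper's: compress essential simple closed curves with nullhomotopic image, discard disc and sphere components, and then monotonise, iterating as necessary. The first two steps are essentially correct (and your observation that a disc component forces $g^d=1$, so that in the relevant infinite-order case $d=0$ and discarding it does not change $n(\Sigma)$, is a point the paper glosses over). Two small imprecisions there: your termination measure ``genus plus number of boundary components'' does not strictly decrease under a \emph{separating} compression (it is preserved while the number of components grows), so one needs a slightly finer complexity, e.g.\ the sum over components of $2g+b$ together with the component count; and after a compression it is $\chi$ that increases by $2$, from which $-\chi^-$ only \emph{weakly} decreases after discarding any resulting positive-$\chi$ pieces --- fine, but worth saying.

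The genuine gap is in the monotonisation step, which is exactly the step the paper does not prove but delegates to Calegari's Proposition~2.13. Capping degree-$0$ boundary components is fine, but your device for negative degrees --- gluing annuli between a positive-degree and a negative-degree boundary component --- only exists when the two degrees are \emph{exactly} opposite: an annulus mapping to $S^1$ restricts to degrees $p$ and $-p$ on its two boundary circles, so it cannot partially cancel, say, a degree-$3$ component against a degree-$(-2)$ component. Passing to disjoint unions of copies of $\Sigma$ does not help, since it changes only the multiplicities of the boundary degrees, not their individual values; so in the generic mismatched case the construction cannot even begin, and some genuinely different idea (this is where the real content of Calegari's argument lies) is required. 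Relatedly, your phrase ``leaving $n$ unchanged or smaller'' is dangerous: if $n$ were to drop while $-\chi^-$ stays fixed, the ratio $-\chi^-/2n$ would \emph{increase} and the claimed inequality would fail, so the construction must be arranged to keep the total degree fixed (exact cancellation does this, since the two tubed components contribute $p+(-p)=0$). As written, then, the proposal reproduces the easy parts of the paper's proof but does not close the one step the paper outsources; either cite Calegari's Proposition~2.13 for monotonicity, as the paper does, or supply an argument that handles boundary components whose degrees do not cancel in pairs.
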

    
    \begin{proof}
        To obtain a disc- and sphere-free surface, it suffices to throw away any disc or sphere component as this does not change the reduced Euler characteristic.
        For incompressibility, the idea is that one can glue a disc onto a non-contractible simple closed curve with nullhomotopic image, thereby reducing its complexity.
        For monotonicity, see for example \cite{cal-scl}*{Proposition 2.13}.
    \end{proof}
    
    \begin{prop}[Transversality]\label{prop:transv}
        Let $X$ be a connected $2$-complex, $g\in\pi_1X$.
        Then any incompressible admissible surface $f:(\Sigma,\partial\Sigma)\rightarrow(X,g)$ may be homotoped to one that is \emph{transverse}, in the sense that it has a decomposition into the following pieces:
        \begin{itemize}
            \item \emph{Vertex discs}, i.e. discs mapping to vertices of $X$,
            \item \emph{$1$-handles}, i.e. trivial $I$-bundles over edges of $X$, and
            \item \emph{Cellular discs}, i.e. discs mapping homeomorphically onto $2$-cells of $X$.
        \end{itemize}
    \end{prop}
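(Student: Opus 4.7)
The plan is a standard general-position argument with respect to the cell structure of $X$, with the incompressibility hypothesis used to eliminate spurious circle components in preimages.

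First I would put $f$ in general position over the $2$-cells. Pick a point $x_c$ in the interior of each $2$-cell $c$ of $X$. By smooth (or PL) approximation, homotope $f$ so that $f^{-1}(x_c)$ is a finite set of points, near each of which $f$ is a local homeomorphism onto a neighborhood of $x_c$. Using the characteristic map of $c$, extend each such local homeomorphism radially in $c$ until it covers all of $c$; since $\partial c\subset X^{(1)}$, the result is a collection of disjoint cellular discs $D_1,\dots,D_N\subset\Sigma$, each mapping homeomorphically onto a $2$-cell.

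Next, set $\Sigma_0=\Sigma\smallsetminus\bigsqcup_i\intr(D_i)$. Using the deformation retraction of $c\smallsetminus\{x_c\}$ onto $\partial c$, homotope $f$ on $\Sigma_0$ so that $f(\Sigma_0)\subset X^{(1)}$. Now $f|_{\Sigma_0}$ is a map into the graph $X^{(1)}$. Placing it in general position with respect to the midpoint of each edge ensures that the preimage of each such midpoint is a properly embedded $1$-submanifold of $\Sigma_0$, i.e., a disjoint union of arcs and circles with endpoints in $f^{-1}(X^{(0)})$.

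This is where incompressibility comes in. Any circle component $\alpha$ of such a preimage has image a single point, hence is nullhomotopic in $X$. By incompressibility, $\alpha$ is nullhomotopic in $\Sigma$; choosing $\alpha$ innermost, it bounds a disc in $\Sigma_0$ across which we may homotope $f$ so as to push the preimage off the edge midpoint, eliminating this component. Iterating, we remove all circle components, so that the preimage of each edge midpoint consists only of arcs. These arcs cut $\Sigma_0$ into pieces, each of which maps either into a neighborhood of a single edge (yielding, after a further isotopy, a trivial $I$-bundle over that edge, i.e.\ a $1$-handle) or into a neighborhood of a single vertex (yielding, after collapsing onto that vertex, a vertex disc). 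The main obstacle is precisely this last step: without incompressibility, closed curves in the preimage of the $1$-skeleton could persist and force components of $\Sigma_0$ with non-trivial topology mapping into a neighborhood of $X^{(1)}$; incompressibility is exactly the hypothesis that enables the innermost-disc compression needed to remove them.
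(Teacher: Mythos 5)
Your general-position outline is genuinely different in spirit from the paper's proof, which consists of a single appeal to Buoncristiano--Rourke--Sanderson's transversality machinery; a hands-on argument is welcome, but as written it has a real gap, concentrated in the two places where you invoke incompressibility. First, in the circle-removal step you assert that an innermost circle component $\alpha$ of the preimage of an edge midpoint ``bounds a disc in $\Sigma_0$''. Incompressibility only gives a disc $D\subset\Sigma$, and $D$ may very well contain some of the cellular discs $D_i$ you fixed in the first step (the arcs of midpoint-preimages emanating from $\partial D_i$ then stay inside $D$, so ``innermost among circles'' does not exclude this). When $D\subset\Sigma_0$ the removal works because $f(D)$ lies in the graph $X^{(1)}$, which is aspherical, so $f|_D$ is homotopic rel $\partial D$ into the edge; but when $D$ runs over cellular discs, $f|_D$ rel $\partial D$ represents a class that meets a $\pi_2(X)$-obstruction, and the proposed local homotopy need not exist. (Also, a small slip: the endpoints of the arc components lie on $\partial\Sigma_0$ and map to the edge midpoints, not to $f^{-1}(X^{(0)})$.)

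Second, and more seriously, even after all circle components are removed you have not shown that the complementary pieces mapping into the star of a vertex are \emph{discs}, which is what the proposition (and its use in \S{}\ref{sec:lqm-ang}) requires. Incompressibility does rule out genus in such a piece automatically (a non-separating simple closed curve in the piece has nullhomotopic image but positive mod-$2$ intersection with a dual curve, so it cannot bound a disc in $\Sigma$), but a vertex piece can still be planar with several boundary circles --- an annulus or a pair of pants whose boundary is assembled from arcs of $\partial\Sigma$, sides of $1$-handles and boundaries of cellular discs --- and none of your moves addresses these. Collapsing such a piece onto the vertex does not make it a disc, and capping its extra boundary circles by the discs they bound in $\Sigma$ (which exist by incompressibility) again requires homotoping $f$ across regions containing handles and cellular discs, i.e.\ the same $\pi_2$-type issue as above. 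So your closing claim that removing circle components of midpoint preimages is ``exactly'' what incompressibility is needed for is where the proof is incomplete: the passage from ``vertex pieces are subsurfaces mapping to vertex stars'' to ``vertex pieces are discs'' is the substantive point, and it is precisely what the paper outsources to the cited transversality theorem together with the incompressibility hypothesis.
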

    
    \begin{proof}
        This follows from Buoncristiano, Rourke and Sanderson's Transversality Theorem \cite{brs}*{\S{}VII.2}.
    \end{proof}
    
    Propositions \ref{prop:monot-incompr} and \ref{prop:transv} say that, for the purpose of computing $\scl$, we can restrict our attention to admissible surfaces which are transverse, disc- and sphere-free, incompressible, and monotone.
    Figure \ref{fig:transv-srf} represents a part of such a surface.
        
        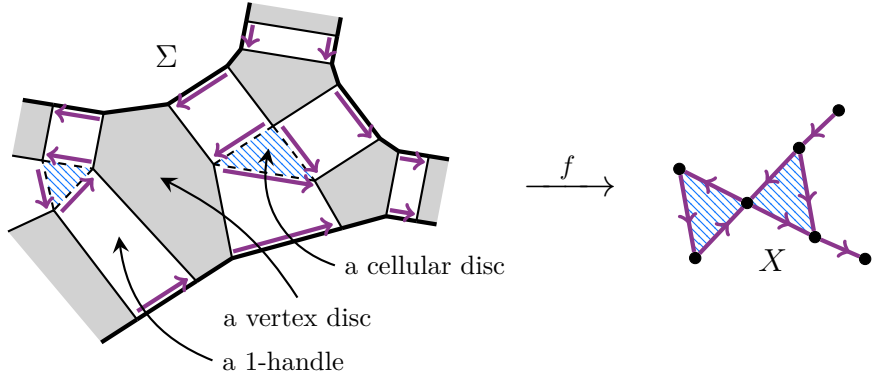
\begin{figure}[hbt]
            \centering
            \begin{subfigure}[c]{19em}
                \centering
                \begin{tikzpicture}[every node/.style={rectangle,draw=none,fill=none},xscale=.27,yscale=.25,rotate=-10]
                    \coordinate (x0) at (-5.5,-5.5);
                    \coordinate (x1) at (-4,-4);
                    \coordinate (x2) at (-1.5,-1.5);
                    \coordinate (x3) at (0,0);
                    \coordinate (x4) at (5,2.5);
                    \coordinate (x5) at (7,3.5);
                    \coordinate (x6) at (8.5,3.5);
                    \coordinate (x7) at (9.5,3.5);
                    
                    \coordinate (y0) at (9.5,7);
                    \coordinate (y1) at (8.5,7);
                    \coordinate (y2) at (7,7);
                    \coordinate (y3) at (6,7.5);
                    \coordinate (y4) at (3.5,10);
                    \coordinate (y5) at (3,11);
                    \coordinate (y6) at (3,12.5);
                    \coordinate (y7) at (3,13.5);
                    
                    \coordinate (z0) at (-1.5,13.5);
                    \coordinate (z1) at (-1.5,12.5);
                    \coordinate (z2) at (-1.5,11);
                    \coordinate (z3) at (-2,10);
                    \coordinate (z4) at (-4.5,7.5);
                    \coordinate (z5) at (-7.5,6.5);
                    \coordinate (z6) at (-10,6.5);
                    \coordinate (z7) at (-11.5,6.5);
                        
                    \coordinate (u0) at (-1.75,4.75);
                    \coordinate (u1) at (.75,7.25);
                    \coordinate (u2) at (3.25,4.75);
                    
                    \coordinate (v0) at (-10,3.5);
                    \coordinate (v1) at (-7.5,3.5);
                    \coordinate (v2) at (-9,1);
                    
                    \coordinate (w0) at (-11.5,3.5);
                    \coordinate (w1) at (-11,-.5);
                    
                    \draw [draw=none,pattern=north west lines, pattern color=NavyBlue]
                        (u0) -- (u1) -- (u2) -- cycle;
                        
                    \draw [draw=none,pattern=north west lines, pattern color=NavyBlue]
                        (v0) -- (v1) -- (v2) -- cycle;
                        
                    \draw [draw=none,fill=gray!35] (x6) -- (x7) -- (y0) -- (y1) -- cycle;
                    \draw [draw=none,fill=gray!35] (y6) -- (y7) -- (z0) -- (z1) -- cycle;
                    \draw [draw=none,fill=gray!35] (z6) -- (z7) -- (w0) -- (v0) -- cycle;
                    \draw [draw=none,fill=gray!35] (v2) -- (w1) -- (x0) -- (x1) -- cycle;
                    
                    \draw [thick,fill=gray!35] (x4) -- (x5) -- (y2) -- (y3) -- (u2) -- cycle;
                    \draw [thick,fill=gray!35] (z2) -- (z3) -- (u1) -- (y4) -- (y5) -- cycle;
                    \draw [thick,fill=gray!35] (x2) -- (x3) -- (u0) -- (z4) -- (z5) -- (v1) -- cycle;
                    
                    \foreach \i [evaluate={\j={\i+1}}] in {0,...,6} {
                        \draw [ultra thick] (x\i) -- (x\j) (y\i) -- (y\j) (z\i) -- (z\j);}
                    
                    \foreach \i [evaluate={\j={mod(\i+1,3)}}] in {0,1,2} {
                        \draw [thick,dashed] (u\i) -- (u\j) (v\i) -- (v\j);}
                        
                    \draw [thick] (y1) -- (x6) (z1) -- (y6) (v2) -- (x1) (z6) -- (v0) (w0) -- (v0) (v2) -- (w1);
                        
                    \draw [ultra thick,Fuchsia,shorten <=2pt,shorten >=2pt,->]
                        ([xshift=-7pt,yshift=7pt]x1) -- ([xshift=-7pt,yshift=7pt]x2);
                    \draw [ultra thick,Fuchsia,shorten <=2pt,shorten >=2pt,<-]
                        ([xshift=7pt,yshift=-7pt]v1) -- ([xshift=7pt,yshift=-7pt]v2);
                    \draw [ultra thick,Fuchsia,shorten <=2pt,shorten >=2pt,->]
                        ([xshift=-7pt,yshift=7pt]x3) -- ([xshift=-7pt,yshift=7pt]x4);
                    \draw [ultra thick,Fuchsia,shorten <=2pt,shorten >=2pt,->]
                        ([xshift=7pt,yshift=-7pt]u0) -- ([xshift=7pt,yshift=-7pt]u2);
                    \draw [ultra thick,Fuchsia,shorten <=1pt,shorten >=1pt,->]
                        ([xshift=0pt,yshift=10pt]x5) -- ([xshift=0pt,yshift=10pt]x6);
                    \draw [ultra thick,Fuchsia,shorten <=1pt,shorten >=1pt,->]
                        ([xshift=0pt,yshift=-10pt]y2) -- ([xshift=0pt,yshift=-10pt]y1);
                    \draw [ultra thick,Fuchsia,shorten <=1pt,shorten >=1pt,->]
                        ([xshift=10pt,yshift=0pt]z1) -- ([xshift=10pt,yshift=0pt]z2);
                    \draw [ultra thick,Fuchsia,shorten <=1pt,shorten >=1pt,->]
                        ([xshift=-10pt,yshift=0pt]y6) -- ([xshift=-10pt,yshift=0pt]y5);
                    \draw [ultra thick,Fuchsia,shorten <=1pt,shorten >=1pt,->]
                        ([xshift=0pt,yshift=-10pt]z5) -- ([xshift=0pt,yshift=-10pt]z6);
                    \draw [ultra thick,Fuchsia,shorten <=1pt,shorten >=1pt,->]
                        ([xshift=0pt,yshift=10pt]v1) -- ([xshift=0pt,yshift=10pt]v0);
                    \draw [ultra thick,Fuchsia,shorten <=2pt,shorten >=2pt,->]
                        ([xshift=7pt,yshift=-7pt]z3) -- ([xshift=7pt,yshift=-7pt]z4);
                    \draw [ultra thick,Fuchsia,shorten <=2pt,shorten >=2pt,->]
                        ([xshift=-7pt,yshift=7pt]u1) -- ([xshift=-7pt,yshift=7pt]u0);
                    \draw [ultra thick,Fuchsia,shorten <=2pt,shorten >=2pt,->]
                        ([xshift=-7pt,yshift=-7pt]y4) -- ([xshift=-7pt,yshift=-7pt]y3);
                    \draw [ultra thick,Fuchsia,shorten <=2pt,shorten >=2pt,->]
                        ([xshift=7pt,yshift=7pt]u1) -- ([xshift=7pt,yshift=7pt]u2);
                    \draw [ultra thick,Fuchsia,shorten <=2pt,shorten >=2pt,->]
                        ([xshift=-7pt,yshift=-7pt]v0) -- ([xshift=-7pt,yshift=-7pt]v2);
                        
                    \begin{scope}[decoration={
                        markings,
                        mark=at position 0 with {\arrow[scale=1.5,>=stealth,xshift=2.5\pgflinewidth]{<}}}
                        ]
                        \draw [thick,postaction=decorate] (barycentric cs:u0=1,u1=1,u2=1) to [bend right=40] (5,.5)
                            node [right] {a cellular disc};
                        
                        \draw [thick,postaction=decorate] (barycentric cs:x1=1,x2=1,v1=1,v2=1) to [bend right=30] (0,-5.5)
                            node [right] {a $1$-handle};
                            
                        \draw [thick,postaction=decorate] (barycentric cs:x2=1,x3=1,u0=1,z4=1,z5=1,v1=1) to [bend left=15] (3.5,-1.5)
                            node [below] {a vertex disc};
                    \end{scope}
                    
                    \draw node at (-5,10) {\Large$\Sigma$};
                \end{tikzpicture}
            \end{subfigure}
            $\mathlarger{\mathlarger{\mathlarger{\xrightarrow{\phantom{aa}f\phantom{aa}}}}}$
            \begin{subfigure}[c]{11em}
                \centering
                \begin{tikzpicture}[every node/.style={circle,draw,fill=black,inner sep=1.5pt},scale=.4,rotate=10]
                    \draw [draw=none,pattern=north west lines, pattern color=NavyBlue]
                        (0,0) -- (-2,1.5) -- (-2,-1.5) -- cycle;
                    \draw [draw=none,pattern=north west lines, pattern color=NavyBlue]
                        (0,0) -- (2,1.5) -- (2,-1.5) -- cycle;
                        
                    \draw node (v2) at (0,0) {};
                    \draw node (v0) at (-2,1.5) {};
                    \draw node (v1) at (-2,-1.5) {};
                    \draw node (v3) at (2,1.5) {};
                    \draw node (v5) at (2,-1.5) {};
                    \draw node (v4) at (3.5,2.5) {};
                    \draw node (v6) at (3.5,-2.5) {};
                        
                    \begin{scope}[decoration={
                        markings,
                        mark=at position 0.5 with {\arrow{<}}}
                        ]
                        \draw [ultra thick,Fuchsia,postaction=decorate] (v1) -- (v0);
                        \draw [ultra thick,Fuchsia,postaction=decorate] (v0) -- (v2);
                        \draw [ultra thick,Fuchsia,postaction=decorate] (v2) -- (v1);
                        \draw [ultra thick,Fuchsia,postaction=decorate] (v2) -- (v3);
                        \draw [ultra thick,Fuchsia,postaction=decorate] (v5) -- (v2);
                        \draw [ultra thick,Fuchsia,postaction=decorate] (v6) -- (v5);
                        \draw [ultra thick,Fuchsia,postaction=decorate] (v3) -- (v4);
                        \draw [ultra thick,Fuchsia,postaction=decorate] (v5) -- (v3);
                    \end{scope}
                    
                    \draw node [rectangle,draw=none,fill=none] at (.5,-2) {\Large$X$};
                \end{tikzpicture}
            \end{subfigure}
            \caption{Part of a transverse incompressible admissible surface.}\label{fig:transv-srf}
        \end{figure}
    
\section{Letter-quasimorphisms}\label{sec:let-qm}

    Letter-quasimorphisms were introduced by Heuer \cite{heuer-raags} as a means of obtaining spectral gaps for $\scl$ in certain amalgams and right-angled Artin groups.
    
    We denote by $\mathcal{A}$ the set of \emph{alternating words} in the free group $F_2\coloneqq F(a,b)$:
    \begin{multline*}
        \mathcal{A}\coloneqq \left\{b_0a_1b_1\cdots a_\ell b_\ell a_{\ell+1}\in F_2\st{}
        a_1,\dots,a_\ell\in\left\{a^{\pm1}\right\},\right.\\
        \left.\:b_1,\dots,b_\ell\in\left\{b^{\pm1}\right\},\:b_0\in\left\{1,b^{\pm1}\right\},\:a_{\ell+1}\in\left\{1,a^{\pm1}\right\}\right\}.
    \end{multline*}
    
    \begin{defi}[Heuer \cite{heuer-raags}]\label{def:let-qm}
        A \emph{letter-quasimorphism} on a group $G$ is a map $\Phi:G\rightarrow\mathcal{A}$ subject to the following conditions:
        \begin{enumerate}
            \item $\Phi\left(g^{-1}\right)=\Phi\left(g\right)^{-1}$ for every $g\in G$, and
            \item For every $g,h\in G$, either
            \begin{enumerate}
                \item $\Phi\left(gh\right)=\Phi\left(g\right)\Phi\left(h\right)$ --- in this case, paths labelled by $\Phi(g)$, $\Phi(h)$, and $\Phi(gh)$ form tripods in the Cayley graph of $F(a,b)$, as shown in Figure \ref{fig:letter-qm-dg} --- or
                \item $\Phi(g)=u^{-1}x_1v$, $\Phi(h)=v^{-1}x_2w$, and $\Phi\left(gh\right)^{-1}=w^{-1}x_3u$ as reduced expressions, for some alternating words $u,v,w\in\mathcal{A}$, and letters $x_1,x_2,x_3\in\left\{a^{\pm1},b^{\pm1}\right\}$ such that either $x_1,x_2,x_3,x_1x_2x_3\in\left\{a^{\pm1}\right\}$, or $x_1,x_2,x_3,x_1x_2x_3\in\left\{b^{\pm1}\right\}$ --- this situation is depicted in Figure \ref{fig:letter-qm-ndg}, but it should be noted that, contrary to \ref{fig:letter-qm-dg}, this picture does not live in the Cayley graph of $F(a,b)$ but is merely a diagrammatic representation of the above equalities.
            \end{enumerate}
        \end{enumerate}
    \end{defi}
    
    \begin{figure}[htb]
        \centering
        \begin{subfigure}[c]{15em}
            \centering
            \begin{tikzpicture}[every node/.style={rectangle,draw=none,fill=none},rotate=80,scale=.8]
                \coordinate (o) at (0,0);
                \foreach \i in {0,...,2}{
                    \coordinate (x\i) at (\i*360/3: 3cm);
                    \coordinate (xpp\i) at ({\i*360/3+12}:3cm);
                    \coordinate (xmm\i) at ({\i*360/3-12}:3cm);
                    \coordinate (xp\i) at ({\i*360/3+2.5}:3cm);
                    \coordinate (yp\i) at ({\i*360/3+60}:.15cm);
                    \draw [thick] (x\i) -- (o);}
                    
                \draw [ultra thick,->] (xpp1) to [bend left=35] (xmm2);
                \draw [draw=none] (xpp1) -- (xmm2) node [midway] {$\Phi(g)$};
                \draw [ultra thick,->] (xpp2) to [bend left=35] (xmm0);
                \draw [draw=none] (xpp2) -- (xmm0) node [midway] {$\Phi(h)$};
                \draw [ultra thick,<-] (xpp0) to [bend left=35] (xmm1);
                \draw [draw=none] (xpp0) -- (xmm1) node [midway] {$\Phi(gh)$};
                    
                \draw [ultra thick,->,RoyalPurple] (yp1) -- (xp1)
                    node [midway,above left] {$u$};
                \draw [ultra thick,->,RoyalPurple] (yp2) -- (xp2)
                    node [midway,below left] {$v$};
                \draw [ultra thick,->,RoyalPurple] (yp0) -- (xp0)
                    node [midway,right] {$w$};
            \end{tikzpicture}
            \caption{Degenerate case.}\label{fig:letter-qm-dg}
        \end{subfigure}
        \qquad
        \begin{subfigure}[c]{15em}
            \centering
            \begin{tikzpicture}[every node/.style={rectangle,draw=none,fill=none},rotate=80,scale=.8]
                \foreach \i in {0,...,2}{
                    \coordinate (x\i) at (\i*360/3: 3cm);
                    \coordinate (xp\i) at ({\i*360/3+2.5}:3cm);
                    \coordinate (xpp\i) at ({\i*360/3+12}:3cm);
                    \coordinate (xmm\i) at ({\i*360/3-12}:3cm);
                    \coordinate (y\i) at (\i*360/3:.5cm);
                    \coordinate (yp\i) at ({\i*360/3+10}:.65cm);
                    \coordinate (ym\i) at ({\i*360/3-10}:.65cm);
                    \draw [thick] (x\i) -- (y\i);}
                
                \draw [thick] (y0) -- (y1) -- (y2) -- cycle;
                
                \draw [ultra thick,->,red,shorten <= 5pt, shorten >= 5pt] (yp1) -- (ym2)
                    node [midway,below] {$x_1$};
                \draw [ultra thick,->,red,shorten <= 5pt, shorten >= 5pt] (yp2) -- (ym0)
                    node [midway,right] {$x_2$};
                \draw [ultra thick,->,red,shorten <= 5pt, shorten >= 5pt] (yp0) -- (ym1)
                    node [midway,above left] {$x_3$};
                    
                \draw [ultra thick,->,RoyalPurple] (yp1) -- (xp1)
                    node [midway,above left] {$u$};
                \draw [ultra thick,->,RoyalPurple] (yp2) -- (xp2)
                    node [midway,below left] {$v$};
                \draw [ultra thick,->,RoyalPurple] (yp0) -- (xp0)
                    node [midway,right] {$w$};
                    
                \draw [ultra thick,->] (xpp1) to [bend left=25] (xmm2);
                \draw [draw=none] (xpp1) -- (xmm2) node [midway] {$\Phi(g)$};
                \draw [ultra thick,->] (xpp2) to [bend left=25] (xmm0);
                \draw [draw=none] (xpp2) -- (xmm0) node [midway] {$\Phi(h)$};
                \draw [ultra thick,<-] (xpp0) to [bend left=25] (xmm1);
                \draw [draw=none] (xpp0) -- (xmm1) node [midway] {$\Phi(gh)$};
            \end{tikzpicture}
            \caption{Non-degenerate case.}\label{fig:letter-qm-ndg}
        \end{subfigure}
        \caption{Letter-quasimorphism condition.}\label{fig:letter-qm}
    \end{figure}
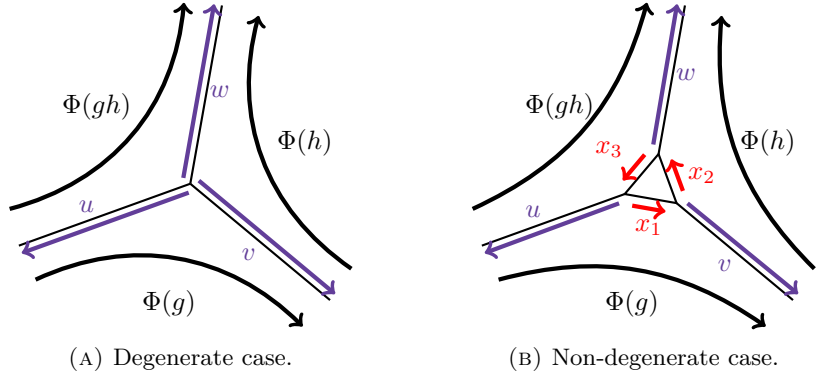
    
    \begin{rmk}\label{rk:let-qm-deg-case}
        Let $\Phi:G\rightarrow\mathcal{A}$ be a letter-quasimorphism and let $g,h\in G$ be such that $\Phi(gh)=\Phi(g)\Phi(h)$.
        Write $\Phi(g)=u^{-1}v$, $\Phi(h)=v^{-1}w$, and $\Phi(gh)^{-1}=w^{-1}u$ as reduced words, as in Figure \ref{fig:letter-qm-dg}.
        Then in fact one of $u$, $v$, and $w$ must be trivial.
        Indeed, otherwise, since $\Phi(g)$ is a reduced alternating word, if for example the first letter of $u$ is $a$ or $a^{-1}$, then the first letter of $v$ is $b$ or $b^{-1}$.
        Similarly, $\Phi(h)$ is an alternating word, so the first letter of $w$ must be $a$ or $a^{-1}$.
        But now both $u$ and $w$ start with $a$ or $a^{-1}$, which contradicts $\Phi(gh)$ being reduced and alternating.
    \end{rmk}
    
    We will need groups to have enough letter-quasimorphisms in the following sense:
    
    \begin{defi}\label{def:qrf}
        We will say that a group $G$ is \emph{quasi-residually free} if for every $g\in G$ with $\scl(g)<\infty$\footnote{The condition that $\scl(g)<\infty$ is equivalent to $g$ vanishing in $H_1\left(G;\mathbb{Q}\right)$, or equivalently, to some power of $g$ lying in the commutator subgroup.}, there is a letter-quasimorphism $\Phi:G\rightarrow\mathcal{A}$ such that $\Phi(g)\neq1$, and $\Phi\left(g^n\right)=\Phi(g)^n$ for all $n\in\mathbb{Z}$.
    \end{defi}
    
    \begin{ex}[Heuer \cite{heuer-raags}]
        The following groups are quasi-residually free:
        \begin{enumerate}
            \item Residually free groups \cite{heuer-raags}*{Example 4.2},
            \item Right-angled Artin groups \cite{heuer-raags}*{Theorem 7.2}.
        \end{enumerate}
    \end{ex}
    
    Heuer \cite{heuer-raags} showed that quasi-residually free groups have a strong spectral gap for $\scl$.
    To do this, he explains how one can use a letter-quasimorphism $G\rightarrow\mathcal{A}$ to construct a classical, real-valued quasimorphism $G\rightarrow\mathbb{R}$; he then uses Bavard duality to obtain lower bounds on $\scl$.
    
    We take a different route and show that a letter-quasimorphism gives rise to an angle structure on admissible surfaces, which can in turn be used to estimate the Euler characteristic.
    This approach is closer in spirit to Duncan and Howie's strong spectral gap in free groups \cite{duncan-howie}.
    
\section{Angle structures}\label{sec:ang-str}
    
    We will work with combinatorial $2$-dimensional CW-complexes, which we call \emph{$2$-complexes} for short.
    Combinatoriality means that the boundary of each cell has a subdivision for which the restriction of the attaching map to each subcell is a homeomorphism onto its image.
    This follows Gersten's terminology \cite{gersten}.
    
    \subsection{Definition and Gauß--Bonnet Formula}
    
    Wise \cite{wise:ang} introduced angle structures to study the interplay of geometric and topological properties of $2$-complexes.
    The idea is to assign a numerical angle to each corner of the $2$-complex of interest.
    
    Let $X$ be a $2$-complex.
    We call the $0$-cells of $X$ \emph{vertices}, its $1$-cells \emph{edges}, and its $2$-cells \emph{faces}.
    A \emph{corner} of a face $f$ at a vertex $v$ is an edge in the link $\lk_X(v)$ of $v$ corresponding to the face $f$.
    We denote by $\mathcal{C}(f)$ the set of corners of the face $f$ and by $\mathcal{C}(v)$ the set of corners at the vertex $v$.
    
    \begin{defi}[Wise \cite{wise:ang}]
        An \emph{angle structure} on $X$ is the assignment of a real number $\angle c$ to each corner $c$ in $X$.
        We then say that $X$ is an \emph{angled $2$-complex}.
    \end{defi}
        
    Note that any real angle value is allowed --- in particular, there is no restriction that angles be positive.
    
    Associated to an angle structure, there is a notion of curvature:
    \begin{itemize}
        \item Let $f$ be a face of $X$.
        The \emph{curvature} of $f$ is
        \[
            \kappa(f)\coloneqq 2\pi+\sum_{c\in\mathcal{C}(f)}\left(\angle c-\pi\right).
        \]
        \item Let $v$ be a vertex of $X$.
        The \emph{curvature} of $v$ is
        \[
            \kappa(v)\coloneqq 2\pi-\pi\cdot\chi\left(\lk_X(v)\right)-\sum_{c\in\mathcal{C}(v)}\angle c.
        \]
    \end{itemize}
    
    An angle structure on $X$ is said to be \emph{non-positively curved} if $\kappa(f)\leq0$ for every face $f$ of $X$ and $\kappa(v)\leq0$ for every vertex $v$ of $X$.
    
    The \emph{total curvature} of an angled $2$-complex $X$ is the quantity
    \[
        \kappa(X)\coloneqq\sum_{f\in F(X)}\kappa(f)+\sum_{v\in V(X)}\kappa(v),
    \]
    where $F(X)$ and $V(X)$ are the face set and vertex set of $X$, respectively.
    
    Angle structures arise naturally --- a prominent example is that of $\CAT(0)$ cell complexes \cite{bh}*{Chapter II.3}.
    The natural angle structure on a $\CAT(0)$ cell complex is non-positively curved.
    
    The relevance of angle structures comes from the \emph{Combinatorial Gauß--Bonnet Formula}, which constrains the topology of a $2$-complex depending on its total curvature.
    
    \begin{prop}[Combinatorial Gauß--Bonnet]\label{prop:gb}
        Let $X$ be an angled $2$-complex.
        Then
        \[
            2\pi\cdot\chi(X)=\kappa(X).
        \]
    \end{prop}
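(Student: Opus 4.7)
The plan is a direct computation that combines the two curvature sums and identifies the resulting expression with $2\pi(V(X) - E(X) + F(X)) = 2\pi\chi(X)$, where I write $E(X)$ for the edge set of $X$. The key observation is that every corner $c$ of $X$ is a corner of exactly one face and at exactly one vertex, so when I sum the face curvatures and the vertex curvatures, the angle contributions $\angle c$ appear once with a $+$ sign (from the face sum) and once with a $-$ sign (from the vertex sum), and therefore cancel.

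Concretely, I would first expand
\[
    \sum_{f\in F(X)}\kappa(f)=2\pi\left|F(X)\right|-\pi\cdot N_c+\sum_c\angle c,
\]
where $N_c$ is the total number of corners of $X$, using that corners of faces biject with all corners. Similarly,
\[
    \sum_{v\in V(X)}\kappa(v)=2\pi\left|V(X)\right|-\pi\sum_{v\in V(X)}\chi\left(\lk_X(v)\right)-\sum_c\angle c.
\]
Adding the two expressions makes the $\angle c$ terms cancel, and I am left with
\[
    \kappa(X)=2\pi\left(\left|F(X)\right|+\left|V(X)\right|\right)-\pi\cdot N_c-\pi\sum_{v\in V(X)}\chi\left(\lk_X(v)\right).
\]

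The remaining step is to identify $N_c+\sum_v\chi(\lk_X(v))$ with $2\left|E(X)\right|$, which I would do by unpacking the link. A vertex of $\lk_X(v)$ is a half-edge of $X$ at $v$, and an edge of $\lk_X(v)$ is exactly a corner at $v$. Thus $\sum_v|V(\lk_X(v))|=2|E(X)|$ (summing half-edges across all vertices counts each edge twice) and $\sum_v|E(\lk_X(v))|=N_c$, so
\[
    \sum_{v\in V(X)}\chi\left(\lk_X(v)\right)=2\left|E(X)\right|-N_c.
\]
Substituting back gives $\kappa(X)=2\pi(|F(X)|-|E(X)|+|V(X)|)=2\pi\chi(X)$, as desired.

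The argument is essentially bookkeeping; the only subtlety to be careful about is the combinatoriality hypothesis on $X$, which ensures that corners, half-edges, and the link structure are well-defined combinatorial objects and that $\chi(X)=|V(X)|-|E(X)|+|F(X)|$ computes the topological Euler characteristic. Loops and multi-edges are handled correctly because each occurrence of an edge at a vertex contributes its own vertex of $\lk_X(v)$. No step should present a genuine obstacle.
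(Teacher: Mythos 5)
Your proposal is correct and follows essentially the same route as the paper: cancel the angle sums using that each corner belongs to a unique face at a unique vertex, then identify $\sum_v\chi\left(\lk_X(v)\right)$ via half-edges and corners to recover $2\pi\left(\sharp F-\sharp E+\sharp V\right)$. The only cosmetic difference is that you collapse the two corner counts into a single total $N_c$, which the paper records as the equality $\sum_f\sharp\mathcal{C}(f)=\sum_v\sharp\mathcal{C}(v)$.
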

    \begin{proof}
        By definition, the total curvature $\kappa(X)$ is equal to
        \[
            \sum_{f\in F(X)}\left(2\pi+\sum_{c\in\mathcal{C}(f)}\left(\angle c-\pi\right)\right)+\sum_{v\in V(X)}\left(2\pi-\pi\cdot\chi\left(\lk_X(v)\right)-\sum_{c\in\mathcal{C}(v)}\angle c\right).
        \]
        Note that every corner appears as a corner of a unique face at a unique vertex, so that
        \begin{equation}
            \sum_{f\in F(X)}\sum_{c\in\mathcal{C}(f)}\angle c=\sum_{v\in V(X)}\sum_{c\in\mathcal{C}(v)}\angle c.\label{eq:sums-corners}
        \end{equation}
        Hence,
        \[
            \kappa(X)=2\pi\left(\sharp F(X)+\sharp V(X)\right)-\pi\sum_{f\in F(X)}\sharp\mathcal{C}(f)-\pi\sum_{v\in V(X)}\chi\left(\lk_X(v)\right).
        \]
        Now the link of a vertex $v$ is a graph, with vertices corresponding to half-edges of $X$ incident to $v$, and edges corresponding to corners of $X$ at $v$.
        It follows that
        \[
            \sum_{v\in V(X)}\chi\left(\lk_X(v)\right)=2\sharp E(X)-\sum_{v\in V(X)}\sharp\mathcal{C}(v).
        \]
        Similar to $\left(\ref{eq:sums-corners}\right)$, we have $\sum_f\sharp\mathcal{C}(f)=\sum_v\sharp\mathcal{C}(v)$.
        We finally obtain
        \[
            \kappa(X)=2\pi\left(\sharp F(X)-\sharp E(X)+\sharp V(X)\right)=2\pi\cdot\chi(X).\qedhere
        \]
    \end{proof}
    
    \subsection{Interior curvature of surfaces}
    
    We have defined angle structures on $2$-complexes, which are constructed by gluing a collection of discs to a graph.
    Each disc then has a notion of curvature as explained above.
    The objects of which we will want to estimate the curvature in the sequel will have a more natural decomposition into compact subsurfaces, rather than discs; this is our motivation for introducing the following notions.
    
    We consider a compact surface $\Lambda$ with nonempty boundary, with a cellulation whose vertex set $V(\Lambda)$ is contained in $\partial\Lambda$.
    Suppose that $\Lambda$ is equipped with an angle structure.
    \begin{itemize}
        \item The \emph{total angle} of a vertex $v$ of $\Lambda$ is
        \[
            \angle_{\tot}^\Lambda(v)\coloneqq\sum_{c\in\mathcal{C}(v)}\angle c.
        \]
        The reason why we include $\Lambda$ in the notation is that we will typically be considering a decomposition of a $2$-complex into subsurfaces, so that a vertex can be seen as belonging to one subsurface or another.
        \item The \emph{interior curvature} of $\Lambda$ is
        \[
            \kappa_{\intr}(\Lambda)\coloneqq2\pi\cdot\chi(\Lambda)+\sum_{v\in V(\Lambda)}\left(\angle_{\tot}^\Lambda(v)-\pi\right).
        \]
        Note in particular that if $\Lambda$ is a disc, then $\chi(\Lambda)=1$ and $\kappa_{\intr}(\Lambda)$ is the curvature of $\Lambda$ seen as a face.
    \end{itemize}
    
    We'll use the following elementary observation:
    
    \begin{lmm}\label{lmm:int-curv}
        Let $\Lambda$ be a compact surface equipped with an angle structure supported on a cellulation of $\Lambda$ with $V(\Lambda)\subseteq\partial\Lambda$.
        Then
        \[
            \kappa_{\intr}(\Lambda)=\sum_{f\in F(\Lambda)}\kappa(f),
        \]
        where $F(\Lambda)$ is the face set of $\Lambda$.
    \end{lmm}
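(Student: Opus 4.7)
The plan is to deduce the lemma from the Combinatorial Gauß--Bonnet Formula (Proposition \ref{prop:gb}) applied to $\Lambda$ itself, by carefully extracting the vertex contributions using the hypothesis that $V(\Lambda)\subseteq\partial\Lambda$.

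First I would apply Proposition \ref{prop:gb} to the angled $2$-complex $\Lambda$, which yields
\[
    \sum_{f\in F(\Lambda)}\kappa(f)+\sum_{v\in V(\Lambda)}\kappa(v)=2\pi\cdot\chi(\Lambda).
\]
Comparing this to the definition
\[
    \kappa_{\intr}(\Lambda)=2\pi\cdot\chi(\Lambda)+\sum_{v\in V(\Lambda)}\left(\angle_{\tot}^\Lambda(v)-\pi\right),
\]
the desired identity $\kappa_{\intr}(\Lambda)=\sum_{f}\kappa(f)$ reduces to the vertex-by-vertex identity $\kappa(v)=\pi-\angle_{\tot}^\Lambda(v)$ for every $v\in V(\Lambda)$. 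Unwinding the definition $\kappa(v)=2\pi-\pi\cdot\chi(\lk_\Lambda(v))-\angle_{\tot}^\Lambda(v)$, this in turn reduces to the purely combinatorial statement that $\chi(\lk_\Lambda(v))=1$ at each vertex.

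The only real step, then, is justifying this link computation. Since $\Lambda$ is a compact surface (hence a $2$-manifold with boundary), each vertex $v$ has a manifold neighbourhood. The hypothesis $V(\Lambda)\subseteq\partial\Lambda$ forces $v$ to lie on $\partial\Lambda$, so a manifold neighbourhood of $v$ in $\Lambda$ is a half-disc and the combinatorial link $\lk_\Lambda(v)$ is (homeomorphic to) a single closed arc, whose Euler characteristic is $1$. Substituting this into the vertex curvature formula and plugging back into Gauß--Bonnet gives the lemma. There is no serious obstacle: the one subtlety to flag is precisely the role of the assumption $V(\Lambda)\subseteq\partial\Lambda$, because an interior vertex would have a circular link with $\chi=0$ and would contribute an extra $\pi$ to $\kappa(v)$, which is exactly the reason the definition of $\kappa_{\intr}$ differs from $\kappa(\Lambda)$ by the boundary correction $\sum_v(\angle_{\tot}^\Lambda(v)-\pi)$.
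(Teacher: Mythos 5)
Your proposal is correct and follows essentially the same route as the paper: apply the Combinatorial Gau\ss--Bonnet Formula (Proposition \ref{prop:gb}) to $\Lambda$ and use that every vertex lies on $\partial\Lambda$, so its link is a segment with $\chi=1$ and hence $\kappa(v)=\pi-\angle_{\tot}^\Lambda(v)$. Your remark about why the hypothesis $V(\Lambda)\subseteq\partial\Lambda$ is needed is exactly the point the paper's definition of interior curvature is designed around.
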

    
    \begin{proof}
        Each vertex $v\in V(\Lambda)$ lies on $\partial\Lambda$, so its link is homeomorphic to a line segment, and we have
        \[
            \kappa(v)=2\pi-\pi\cdot\chi\left(\lk_\Lambda(v)\right)-\sum_{c\in\mathcal{C}(v)}\angle c=\pi-\angle_{\tot}^\Lambda(v).
        \]
        Hence, the Gauß--Bonnet Formula (Proposition \ref{prop:gb}) gives
        \[
            \sum_{f\in F(\Lambda)}\kappa(f)=2\pi\cdot\chi(\Lambda)+\sum_{v\in V(\Lambda)}\left(\angle_{\tot}^\Lambda(v)-\pi\right)=\kappa_{\intr}(\Lambda).\qedhere
        \]
    \end{proof}
    
    This says that, if $2$-cells --- which are normally discs --- are replaced with more general compact surfaces with nonempty boundary, then the interior curvature is the right analogue of the curvature of a face.
    
    \begin{cor}\label{cor:surf-dec}
        Let $X$ be an angled $2$-complex.
        Suppose that there is a finite collection $\left(\Lambda_i\right)_{i\in I}$ of compact surfaces with nonempty boundary cellularly embedded in $X$, such that
        \begin{itemize}
            \item Each face of $X$ is contained in a unique $\Lambda_i$, and
            \item Each vertex of $X$ is contained in $\partial\Lambda_i$ for some (possibly several) $i\in I$.
        \end{itemize}
        Then the total curvature of $X$ can be computed via
        \[
            \pushQED{\qed}
            \kappa(X)=\sum_{i\in I}\kappa_{\intr}\left(\Lambda_i\right)+\sum_{v\in V(X)}\kappa(v).\qedhere
            \popQED
        \]
    \end{cor}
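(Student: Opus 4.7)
The proof is a direct bookkeeping argument, combining the definition of $\kappa(X)$ with Lemma \ref{lmm:int-curv}. The plan is to apply that lemma separately to each subsurface $\Lambda_i$ and then reassemble the pieces.

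First, I would observe that the angle structure on $X$ restricts to an angle structure on each embedded subsurface $\Lambda_i$: since $\Lambda_i$ is embedded and inherits its cellulation from $X$, every corner of a face of $\Lambda_i$ is a corner of $X$, and we assign it the same angle. Next, I would note that the hypotheses ensure $V(\Lambda_i)\subseteq\partial\Lambda_i$ for each $i$; this is the condition needed to invoke Lemma \ref{lmm:int-curv}. Indeed, any vertex of $\Lambda_i$ is in particular a vertex of $X$, hence by assumption lies in $\partial\Lambda_j$ for some $j$; and since vertices cannot lie in the interior of faces of $X$, and each face of $X$ sits inside a unique $\Lambda_i$, a vertex of $\Lambda_i$ cannot lie in the interior of $\Lambda_i$.

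With the above in place, Lemma \ref{lmm:int-curv} gives, for each $i\in I$,
\[
    \kappa_{\intr}\left(\Lambda_i\right)=\sum_{f\in F\left(\Lambda_i\right)}\kappa(f).
\]
Summing over $i$ and using the hypothesis that each face of $X$ lies in a unique $\Lambda_i$, the families $\left(F(\Lambda_i)\right)_{i\in I}$ partition $F(X)$, so
\[
    \sum_{i\in I}\kappa_{\intr}\left(\Lambda_i\right)=\sum_{i\in I}\sum_{f\in F\left(\Lambda_i\right)}\kappa(f)=\sum_{f\in F(X)}\kappa(f).
\]
Adding $\sum_{v\in V(X)}\kappa(v)$ to both sides, the right-hand side becomes $\kappa(X)$ by definition of the total curvature, yielding the claimed identity.

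There is no real obstacle here: the proof is essentially a reshuffling of sums. The only point demanding any care is verifying that $V(\Lambda_i)\subseteq\partial\Lambda_i$ so that Lemma \ref{lmm:int-curv} applies, and this is immediate from the two bullet-point hypotheses combined with the fact that the $\Lambda_i$ are embedded subsurfaces carrying the induced cellulation.
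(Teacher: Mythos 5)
Your overall route is exactly the intended one: the corollary is immediate from Lemma \ref{lmm:int-curv} applied to each $\Lambda_i$, together with the observation that the face sets $F\left(\Lambda_i\right)$ partition $F(X)$, and the paper indeed records it with no further argument. The restriction of the angle structure, the regrouping of the face sum, and adding back $\sum_{v\in V(X)}\kappa(v)$ are all fine.

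The one step that does not hold up as written is your deduction that $V\left(\Lambda_i\right)\subseteq\partial\Lambda_i$. You argue that a vertex of $\Lambda_i$ cannot lie in the interior of $\Lambda_i$ because vertices cannot lie in the interior of faces; but an interior vertex of the subsurface $\Lambda_i$ need not be interior to any face --- it can be a vertex of the induced cellulation surrounded by faces of $\Lambda_i$. The second bullet only guarantees that such a vertex lies on $\partial\Lambda_j$ for \emph{some} $j$, and nothing in the two bullets as literally stated forces $j=i$: one can attach another region to $X$ meeting $\Lambda_i$ only at an interior vertex $v$ of its cellulation, and then both hypotheses hold while $v\notin\partial\Lambda_i$ (and in that configuration the displayed identity is actually off by $\pi$ for each such vertex, since $\kappa_{\intr}$ weights a vertex by $\angle_{\tot}-\pi$ whereas an interior vertex contributes $\angle_{\tot}-2\pi$ to the face curvatures via Gauß--Bonnet). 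So the containment $V\left(\Lambda_i\right)\subseteq\partial\Lambda_i$ should be taken as part of the intended reading of the hypothesis --- it is in any case needed for $\kappa_{\intr}\left(\Lambda_i\right)$ to be defined, since the paper only defines interior curvature for cellulations with all vertices on the boundary --- and it does hold in the paper's application, where every vertex lies on a transition or boundary arc. State it as an assumption (or verify it in the situation at hand) rather than claiming it follows from the bullets by the interior-of-faces argument.
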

    
\section{From letter-quasimorphisms to angle structures}\label{sec:lqm-ang}
    
    We now reach the core of this paper.
    The goal is to prove the following:
    
    \begin{mthm}{A}\label{thm:exist-ang-str}
        Let $X$ be a connected $2$-complex and let $g\in\pi_1X\smallsetminus1$.
        Assume that there is a letter-quasimorphism $\Phi:\pi_1X\rightarrow\mathcal{A}$ with $\Phi(g)\neq1$ and $\Phi\left(g^n\right)=\Phi(g)^n$ for all $n\in\mathbb{Z}$.
        
        Then given a monotone, incompressible, disc- and sphere-free admissible surface $f:(\Sigma,\partial\Sigma)\rightarrow(X,g)$ with $f_*\left[\partial\Sigma\right]=n(\Sigma)\left[S^1\right]$ for some $n(\Sigma)\in\mathbb{N}_{\geq1}$, there is an angle structure on $\Sigma$ whose total curvature satisfies
        \[
            \kappa\left(\Sigma\right)\leq-2\pi\cdot n(\Sigma).
        \]
    \end{mthm}
    
    Before entering the proof of Theorem \ref{thm:exist-ang-str}, we explain its implications for $\scl$:
    
    \begin{cor}[Heuer \cite{heuer-raags}*{Theorem 4.7}]
        Let $G$ be a group and $g\in G\smallsetminus1$.
        Assume that there is a letter-quasimorphism $\Phi:G\rightarrow\mathcal{A}$ with $\Phi(g)\neq1$ and $\Phi\left(g^n\right)=\Phi(g)^n$ for all $n\in\mathbb{Z}$.
        
        Then $\scl(g)\geq\frac{1}{2}$.
    \end{cor}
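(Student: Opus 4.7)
The plan is to deduce the corollary as a straightforward consequence of Theorem \ref{thm:exist-ang-str} together with the Combinatorial Gauß--Bonnet Formula (Proposition \ref{prop:gb}) and the reduction of admissible surfaces (Proposition \ref{prop:monot-incompr}).

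First, I would realize $G$ as the fundamental group of a connected $2$-complex $X$ (for instance, the presentation $2$-complex associated to any presentation of $G$). If $\scl(g)=\infty$, the conclusion is trivial, so assume $\scl(g)<\infty$. Then there exist admissible surfaces $f:(\Sigma,\partial\Sigma)\to(X,g)$ with $\partial f_*[\partial\Sigma]=n(\Sigma)[S^1]$, and by Proposition \ref{prop:monot-incompr}, when estimating $\scl(g)$ we can restrict to those admissible surfaces that are in addition monotone, incompressible, and disc- and sphere-free, as the reduction does not increase the ratio $-\chi^-(\Sigma)/(2n(\Sigma))$.

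For such a surface, Theorem \ref{thm:exist-ang-str} provides an angle structure on $\Sigma$ with total curvature bounded above by $-2\pi\cdot n(\Sigma)$. Now the Combinatorial Gauß--Bonnet Formula (Proposition \ref{prop:gb}), applied to $\Sigma$ viewed as a (possibly disconnected) angled $2$-complex, gives the identity $2\pi\cdot\chi(\Sigma)=\kappa(\Sigma)$. Combining this with the curvature estimate yields $\chi(\Sigma)\leq -n(\Sigma)$. Since $\Sigma$ has no disc and no sphere components, every connected component $K$ of $\Sigma$ satisfies $\chi(K)\leq0$, so $\chi(\Sigma)=\chi^-(\Sigma)$, and therefore
\[
    \frac{-\chi^-(\Sigma)}{2n(\Sigma)}\geq\frac{1}{2}.
\]

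Taking the infimum over all such reduced admissible surfaces gives $\scl(g)\geq\frac{1}{2}$. There is essentially no obstacle here beyond careful bookkeeping: the only subtlety is checking that disc- and sphere-freeness makes $\chi^-$ agree with $\chi$ componentwise, which is immediate since the only compact orientable surfaces with positive Euler characteristic are the sphere and the disc.
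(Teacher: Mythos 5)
Your argument is correct and follows exactly the paper's own route: realise $G$ as $\pi_1$ of a $2$-complex, reduce via Proposition \ref{prop:monot-incompr}, apply Theorem \ref{thm:exist-ang-str} to get the curvature bound, and convert it into an Euler characteristic bound with Proposition \ref{prop:gb}. The only cosmetic difference is that you observe $\chi^-(\Sigma)=\chi(\Sigma)$ for disc- and sphere-free surfaces, whereas the paper simply uses the inequality $-\chi^-(\Sigma)\geq-\chi(\Sigma)$; both are fine.
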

    
    \begin{proof}
        Fix $X$ a connected $2$-complex with $\pi_1X=G$ (for example, $X$ can be a presentation $2$-complex of $G$).
        Proposition \ref{prop:monot-incompr} says that $\scl$ can be computed as the infimum of $\frac{-\chi^-(\Sigma)}{2n(\Sigma)}$ over all disc- and sphere-free, incompressible, monotone admissible surfaces.
        For any such admissible surface $\Sigma$, Theorem \ref{thm:exist-ang-str} gives an angle structure on $\Sigma$ with a bound on the total curvature.
        Now the Gauß--Bonnet Formula (Proposition \ref{prop:gb}) translates the bound on $\kappa\left(\Sigma\right)$ into a bound on $\chi\left(\Sigma\right)$, and one obtains
        \[
            -\chi^-(\Sigma)\geq-\chi(\Sigma)=-\frac{1}{2\pi}\kappa\left(\Sigma\right)\geq n(\Sigma).
        \]
        After passing to the infimum, this implies that $\scl(g)\geq\frac{1}{2}$.
    \end{proof}
    
    \begin{cor}[Heuer \cite{heuer-raags}]
        Every quasi-residually free group has a strong spectral gap for $\scl$.\qed
    \end{cor}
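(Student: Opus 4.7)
The plan is to deduce this corollary directly from the previous corollary (the letter-quasimorphism lower bound $\scl(g)\geq\frac{1}{2}$) by a short case-split on whether $\scl(g)$ is finite, using the definition of quasi-residually free.

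Fix a quasi-residually free group $G$ and an arbitrary nontrivial element $g\in G\smallsetminus1$. I want to show $\scl(g)\geq\frac{1}{2}$. I split on whether $\scl(g)$ is finite. If $\scl(g)=\infty$ there is nothing to prove, since $\infty\geq\frac{1}{2}$. If on the other hand $\scl(g)<\infty$, then by Definition \ref{def:qrf} of quasi-residually free, there exists a letter-quasimorphism $\Phi:G\rightarrow\mathcal{A}$ satisfying $\Phi(g)\neq1$ and the homogeneity $\Phi(g^n)=\Phi(g)^n$ for every $n\in\mathbb{Z}$. These are exactly the hypotheses of the preceding corollary (Heuer's theorem \cite{heuer}*{Theorem 4.7}), which I may invoke to conclude $\scl(g)\geq\frac{1}{2}$.

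Since $g\in G\smallsetminus1$ was arbitrary, this shows $\scl(g)\geq\frac{1}{2}$ for every nontrivial $g\in G$, which is precisely the definition of a strong spectral gap (Definition \ref{def:str-sp-gap}). There is no real obstacle: the entire content of the corollary has already been packaged into the previous corollary and the definition of quasi-residual freeness, so the only thing to observe is that the finiteness hypothesis in Definition \ref{def:qrf} is harmless because the case $\scl(g)=\infty$ is vacuous for the spectral gap statement.
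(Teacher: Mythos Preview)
Your proof is correct and is exactly the argument the paper intends: the corollary is marked with \qed in the paper, i.e.\ it is treated as immediate from the preceding corollary together with Definition~\ref{def:qrf}, and your case-split on $\scl(g)<\infty$ versus $\scl(g)=\infty$ is the obvious way to unpack that.
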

    
    The rest of this section is devoted to proving the main theorem.
    
    \begin{proof}[Proof of Theorem \ref{thm:exist-ang-str}]
    
        Note that $\Phi(g)$ is an alternating word in $F_2=F(a,b)$.
        Furthermore, the assumption that $\Phi\left(g^n\right)=\Phi\left(g\right)^n$ for all $n$, and the fact that each $\Phi\left(g^n\right)$ is an alternating word implies that $\Phi(g)$ is cyclically reduced, and cannot start and end in the same letter.
        Let us assume to fix notations that it starts with $a$ or $a^{-1}$ and ends with $b$ or $b^{-1}$, so that
        \[
            \Phi(g)=a_1b_1\cdots a_\ell b_\ell,
        \]
        with $a_i\in\left\{a^{\pm1}\right\}$ and $b_i\in\left\{b^{\pm1}\right\}$.
    
        Start with $f:(\Sigma,\partial\Sigma)\rightarrow(X,g)$ a disc- and sphere-free, incompressible, monotone admissible surface with $f_*\left[\partial\Sigma\right]=n(\Sigma)\left[S^1\right]$ for some $n(\Sigma)\in\mathbb{N}_{\geq1}$.
        
        After contracting a spanning tree in the $1$-skeleton, one may assume that $X$ has only one vertex.
        One may also subdivide the faces of $X$ to ensure that they are all triangles.
        After applying Proposition \ref{prop:transv}, we can further assume that $f$ is transverse.
        
    \def\spr{5}
    \def\angred{45}
    \def\anggrn{75}
    \subsection{Subdividing \texorpdfstring{$1$}{1}-handles via the letter-quasimorphism}
        
        Each $1$-handle $\mathcal{H}$ in $\Sigma$ is a trivial $I$-bundle over an edge $e$ of $X$, which is a loop and represents an element $g_\mathcal{H}\in\pi_1X$.
        We consider the image of $g_\mathcal{H}$ under our letter-quasimorphism $\Phi:\pi_1X\rightarrow\mathcal{A}$; this is an alternating word in $F_2$, and we write for example
        \begin{equation}\label{eq:phigh}
            \Phi\left(g_\mathcal{H}\right)=\alpha_1\beta_1\cdots\alpha_k\beta_k,
        \end{equation}
        with $\alpha_i\in\left\{a^{\pm1}\right\}$ and $\beta_i\in\left\{b^{\pm1}\right\}$.
        Note that $\Phi\left(g_\mathcal{H}\right)$ might have odd length and might not be cyclically reduced, but the construction remains the same.
        The element $\Phi\left(g_\mathcal{H}\right)\in F_2$ is represented by an immersed loop $\gamma_{\Phi\left(g_\mathcal{H}\right)}:S^1\looparrowright B_2$, where $B_2\coloneqq S^1_a\vee S^1_b$ is a bouquet of two oriented circles labelled by $a$ and $b$.
        
        The $1$-handle $\mathcal{H}$ being a trivial $I$-bundle over $e$ means that it is homeomorphic to the product $I\times e$ (where $I\coloneqq[0,1]$), and the map $\mathcal{H}\rightarrow e$ is given by projection onto the second coordinate.
        Now pick an orientation-preserving homeomorphism $e\cong S^1$ and consider the composition
        \[
            \mathcal{H}=I\times e\xrightarrow{\textnormal{proj}_2}e\cong S^1\xrightarrow{\gamma_{\Phi\left(g_\mathcal{H}\right)}}B_2=S^1_a\vee S^1_b.
        \]
        This map is transverse, and defines a decomposition of $\mathcal{H}$ into $2k$ $1$-handles (if $\Phi\left(g_\mathcal{H}\right)$ has even length as in (\ref{eq:phigh})) mapping successively to $\alpha_1,\beta_1,\dots,\alpha_k,\beta_k$.
        To distinguish those $1$-handles from the original ones, we call them \emph{stripes}.
        Stripes can be divided into two types:
        \begin{itemize}
            \item \emph{$a$-stripes}, with image contained in $S^1_a$, and
            \item \emph{$b$-stripes}, with image contained in $S^1_b$.
        \end{itemize}
        We'll depict those two types of stripes with two different colours (red and green).
        The map from each stripe to $S^1_a$ or $S^1_b$ will be encoded in pictures by arrows parallel to the base edge of the $I$-bundle, indicating the positive direction of $S^1_a$ or $S^1_b$ --- see Figure \ref{fig:new-handle}.
        
        \begin{figure}[hbt]
            \centering
            \begin{subfigure}[c]{13em}
                \centering
                \begin{tikzpicture}[every node/.style={rectangle,draw=none,fill=none},rotate=10,xscale=1,yscale=.4]
                    \coordinate (a0) at (0,0);
                    \coordinate (a1) at (3,0);
                    \coordinate (b0) at (0,4);
                    \coordinate (b1) at (3,4);
                    \draw [thick] (a0) -- (a1) -- (b1) -- (b0) -- cycle;
                    \draw [ultra thick,Fuchsia,shorten <=2pt,shorten >=2pt,->]
                        ([yshift=5pt]a0) -- ([yshift=5pt]a1)
                        node [midway,below,inner sep=7pt] {$g_\mathcal{H}$};
                    \draw [ultra thick,Fuchsia,shorten <=2pt,shorten >=2pt,->]
                        ([yshift=-5pt]b0) -- ([yshift=-5pt]b1);
                    \node at (barycentric cs:a0=1,a1=1,b0=1,b1=1) {\large$\mathcal{H}$};
                \end{tikzpicture}
            \end{subfigure}
            $\mathlarger{\mathlarger{\mathlarger{\mathlarger{\rightsquigarrow}}}}$
            \begin{subfigure}[c]{13em}
                \centering
                \begin{tikzpicture}[every node/.style={rectangle,draw=none,fill=none},rotate=10,xscale=1,yscale=.4]
                    \foreach \i in {0,1,...,6}{
                        \coordinate (a\i) at ({\i/6*3},0);
                        \coordinate (b\i) at ({\i/6*3},4);}
                    \foreach \x [evaluate={\i={int(2*(\x-1))}},evaluate={\j={int(\i+1)}}] in {1,2,3}{
                        \draw [draw=none,pattern={Lines[angle=\angred,distance=\spr]},pattern color=red]
                            (a\i) -- (a\j) -- (b\j) -- (b\i) -- cycle;}
                    \foreach \x [evaluate={\i={int(2*\x-1)}},evaluate={\j={int(\i+1)}}] in {1,2,3}{
                        \draw [draw=none,pattern={Lines[angle=\anggrn,distance=\spr]},pattern color=PineGreen]
                            (a\i) -- (a\j) -- (b\j) -- (b\i) -- cycle;}
                    \foreach \i in {0,1,...,6}{
                        \draw [thick] (a\i) -- (b\i);}
                    \draw [thick] (a0) -- (a6) (b0) -- (b6);
                    
                    \draw [ultra thick,PineGreen,shorten <=1pt,shorten >=1pt,<-]
                        ([yshift=5pt]a1) -- ([yshift=5pt]a2) node[midway,above,yshift=.3pt,circle,draw=none,fill=white,inner sep=.15pt] {$b$};
                    \draw [ultra thick,PineGreen,shorten <=1pt,shorten >=1pt,<-]
                        ([yshift=-5pt]b1) -- ([yshift=-5pt]b2);
                        
                    \draw [ultra thick,PineGreen,shorten <=1pt,shorten >=1pt,<-]
                        ([yshift=5pt]a3) -- ([yshift=5pt]a4) node[midway,above,yshift=.3pt,circle,draw=none,fill=white,inner sep=.15pt] {$b$};
                    \draw [ultra thick,PineGreen,shorten <=1pt,shorten >=1pt,<-]
                        ([yshift=-5pt]b3) -- ([yshift=-5pt]b4);
                        
                    \draw [ultra thick,PineGreen,shorten <=1pt,shorten >=1pt,->]
                        ([yshift=5pt]a5) -- ([yshift=5pt]a6) node[midway,above,yshift=.3pt,circle,draw=none,fill=white,inner sep=.15pt] {$b$};
                    \draw [ultra thick,PineGreen,shorten <=1pt,shorten >=1pt,->]
                        ([yshift=-5pt]b5) -- ([yshift=-5pt]b6);
                        
                    \draw [ultra thick,red,shorten <=1pt,shorten >=1pt,->]
                        ([yshift=5pt]a0) -- ([yshift=5pt]a1) node[midway,above,yshift=.3pt,circle,draw=none,fill=white,inner sep=.15pt] {$a$};
                    \draw [ultra thick,red,shorten <=1pt,shorten >=1pt,->]
                        ([yshift=-5pt]b0) -- ([yshift=-5pt]b1);
                        
                    \draw [ultra thick,red,shorten <=1pt,shorten >=1pt,<-]
                        ([yshift=5pt]a2) -- ([yshift=5pt]a3) node[midway,above,yshift=.3pt,circle,draw=none,fill=white,inner sep=.15pt] {$a$};
                    \draw [ultra thick,red,shorten <=1pt,shorten >=1pt,<-]
                        ([yshift=-5pt]b2) -- ([yshift=-5pt]b3);
                        
                    \draw [ultra thick,red,shorten <=1pt,shorten >=1pt,->]
                        ([yshift=5pt]a4) -- ([yshift=5pt]a5) node[midway,above,yshift=.3pt,circle,draw=none,fill=white,inner sep=.15pt] {$a$};
                    \draw [ultra thick,red,shorten <=1pt,shorten >=1pt,->]
                        ([yshift=-5pt]b4) -- ([yshift=-5pt]b5);
                            
                    \draw [ultra thick,shorten <=2pt,shorten >=2pt,->]
                        ([yshift=-7pt]a0) -- ([yshift=-7pt]a6)
                        node [midway,below] {$\Phi\left(g_\mathcal{H}\right)$};
                \end{tikzpicture}
            \end{subfigure}
            \caption{Decomposition of a $1$-handle into stripes via the letter-quasimorphism. Here, $\Phi\left(g_\mathcal{H}\right)=ab^{-1}a^{-1}b^{-1}ab$.}\label{fig:new-handle}
        \end{figure}
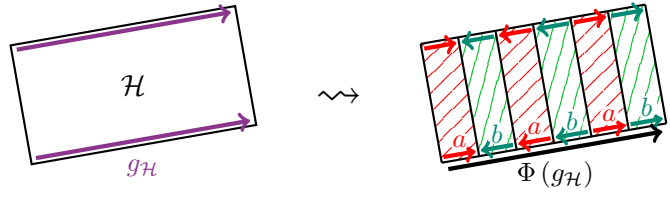
        
    \subsection{Extension to cellular discs}\label{subsec:ext-cell}
        
        The above construction defines a new cellular structure on the $1$-handles of $\Sigma$, and we now want to do something similar to cellular discs.
        At the end of the construction, $\Sigma$ will be decomposed into its preexisting vertex discs, $a$-regions (extending $a$-stripes), and $b$-regions (extending $b$-stripes).
        
        Each cellular disc $\mathcal{D}$ maps homeomorphically to a (triangular) $2$-cell of $X$, and has three sides mapping under $f$ to edges representing elements $g_1,g_2,g_3\in\pi_1X$.
        The presence of the $2$-cell means that $g_3=\left(g_1g_2\right)^{-1}$.
        Now the three $1$-handles incident to $\mathcal{D}$ have been subdivided as explained above into parallel stripes labelled by $a^{\pm1}$ and $b^{\pm1}$, and the concatenation of those labels along each of the three edges of the triangle are given by $\Phi\left(g_1\right)$, $\Phi\left(g_2\right)$, and $\Phi\left(g_1g_2\right)^{-1}$.
        By the letter-quasimorphism condition, they satisfy one of the patterns of Figure \ref{fig:letter-qm}: we can write
        \[
            \Phi\left(g_1\right)=u^{-1}x_1v,\quad\Phi\left(g_2\right)=v^{-1}x_2w,\quad\Phi\left(g_1g_2\right)^{-1}=w^{-1}x_3u,
        \]
        with either $x_1=x_2=x_3=1$, or $x_1,x_2,x_3,x_1x_2x_3\in\left\{a^{\pm1}\right\}$, or $x_1,x_2,x_3,x_1x_2x_3\in\left\{b^{\pm1}\right\}$.
        Moreover, Remark \ref{rk:let-qm-deg-case} says that, in the degenerate case (when $x_1=x_2=x_3=1$), we have $u=1$ or $v=1$ or $w=1$ --- we will assume that $w=1$ to fix notations.
        
        As illustrated by Figure \ref{fig:new-discs}, the boundary of $\mathcal{D}$ has two consecutive sections labelled respectively by $u$ and $u^{-1}$, and these sections lie on different sides of $\mathcal{D}$.
        We can connect the vertices of these two sections and extend the $a$- and $b$-stripes across part of $\mathcal{D}$.
        We then apply the same process to $v$ and $w$.
        The new stripes that we have constructed inside $\mathcal{D}$ are called $a$- or $b$-stripes depending on whether they extend $a$- or $b$-stripes.
        Note that some of the new stripes, at the vertices of the cellular disc, are triangular (one of the fibres of the $I$-bundle is collapsed to a point), but we still call them stripes.
        
        In the degenerate case (Figure \ref{fig:new-discs-dg}), the new stripes now fill $\mathcal{D}$.
        In the non-degenerate case (Figure \ref{fig:new-discs-ndg}), we are left with one unfilled hexagon inside $\mathcal{D}$; this hexagon has three sides lying on $\partial\mathcal{D}$ and mapping either all to $S^1_a$ or all to $S^1_b$.
        We declare the hexagon to be of type $a$ or $b$ accordingly.
        
        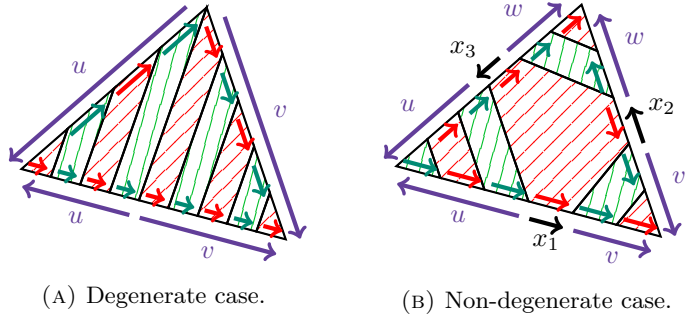
\begin{figure}[hbt]
            \centering
            \begin{subfigure}[c]{14em}
                \centering
                \begin{tikzpicture}[every node/.style={rectangle,draw=none,fill=none},rotate=-15,scale=.9]
                    \coordinate (a) at (0,3);
                    \coordinate (b) at (-2,0);
                    \coordinate (c) at (2,0);
                    
                    \foreach \i in {0,...,9}{
                        \coordinate (bc\i) at ({-2+4*\i/9},0);}
                    \foreach \i in {0,...,4}{
                        \coordinate (ba\i) at ({-2+2*\i/4},{3*\i/4});}
                    \foreach \i in {0,...,5}{
                        \coordinate (ac\i) at ({2*\i/5},{3-3*\i/5});}
                        
                    \foreach \i [evaluate={\j={int(\i-1)}}] in {1,3}{
                        \draw [pattern={Lines[angle=\angred,distance=\spr]},pattern color=red]
                            (bc\i) -- (ba\i) -- (ba\j) -- (bc\j) -- cycle;
                        \draw [thick]
                            (bc\i) -- (ba\i) -- (ba\j) -- (bc\j) -- cycle;
                        \draw [ultra thick,red,shorten <=2pt,shorten >=2pt,->]
                            ([yshift=+3pt]bc\j) -- ([yshift=+3pt]bc\i);
                        \ifnum\j>1
                            \draw [ultra thick,red,shorten <=3pt,shorten >=3pt,->]
                                ([yshift=-5pt]ba\j) -- ([yshift=-5pt]ba\i);
                        \fi}
                            
                    \foreach \i [evaluate={\j={int(\i-1)}}] in {2,4}{
                        \draw [pattern={Lines[angle=\anggrn,distance=\spr]},pattern color=PineGreen]
                            (bc\i) -- (ba\i) -- (ba\j) -- (bc\j) -- cycle;
                        \draw [thick]
                            (bc\i) -- (ba\i) -- (ba\j) -- (bc\j) -- cycle;
                        \draw [ultra thick,PineGreen,shorten <=2pt,shorten >=2pt,->]
                            ([yshift=+3pt]bc\j) -- ([yshift=+3pt]bc\i);
                        \draw [ultra thick,PineGreen,shorten <=3pt,shorten >=3pt,->]
                            ([yshift=-5pt]ba\j) -- ([yshift=-5pt]ba\i);}
                            
                    \foreach \i [evaluate={\j={int(\i-1)}},evaluate={\ip={int(\i+4)}},evaluate={\jp={int(\j+4)}}] in {1,3,5}{
                        \draw [pattern={Lines[angle=\angred,distance=\spr]},pattern color=red]
                            (bc\ip) -- (ac\i) -- (ac\j) -- (bc\jp) -- cycle;
                        \draw [thick]
                            (bc\ip) -- (ac\i) -- (ac\j) -- (bc\jp) -- cycle;
                        \draw [ultra thick,red,shorten <=2pt,shorten >=2pt,->]
                            ([yshift=+3pt]bc\jp) -- ([yshift=+3pt]bc\ip);
                        \ifnum\i<5
                            \draw [ultra thick,red,shorten <=3pt,shorten >=3pt,->]
                                ([yshift=-5pt]ac\j) -- ([yshift=-5pt]ac\i);
                        \fi}
                        
                    \foreach \i [evaluate={\j={int(\i-1)}},evaluate={\ip={int(\i+4)}},evaluate={\jp={int(\j+4)}}] in {2,4}{
                        \draw [pattern={Lines[angle=\anggrn,distance=\spr]},pattern color=PineGreen]
                            (bc\ip) -- (ac\i) -- (ac\j) -- (bc\jp) -- cycle;
                        \draw [thick]
                            (bc\ip) -- (ac\i) -- (ac\j) -- (bc\jp) -- cycle;
                        \draw [ultra thick,PineGreen,shorten <=2pt,shorten >=2pt,->]
                            ([yshift=+3pt]bc\jp) -- ([yshift=+3pt]bc\ip);
                        \draw [ultra thick,PineGreen,shorten <=3pt,shorten >=3pt,->]
                            ([yshift=-5pt]ac\j) -- ([yshift=-5pt]ac\i);}
                    
                    \draw [ultra thick,shorten <=2pt,shorten >=2pt,<-,RoyalPurple] ([xshift=-7pt]b) -- ([xshift=-7pt]a)
                        node [midway,above left] {$u$};
                    \draw [ultra thick,shorten <=2pt,shorten >=2pt,->,RoyalPurple] ([xshift=7pt]a) -- ([xshift=7pt]c)
                        node [midway,above right] {$v$};
                    \draw [ultra thick,shorten <=2pt,shorten >=2pt,<-,RoyalPurple] ([yshift=-7pt]b) -- ([yshift=-7pt]bc4)
                        node [midway,below] {$u$};
                    \draw [ultra thick,shorten <=2pt,shorten >=2pt,->,RoyalPurple] ([yshift=-7pt]bc4) -- ([yshift=-7pt]c)
                        node [midway,below] {$v$};
                \end{tikzpicture}
                \caption{Degenerate case.}\label{fig:new-discs-dg}
            \end{subfigure}
            \begin{subfigure}[c]{14em}
                \centering
                \begin{tikzpicture}[every node/.style={rectangle,draw=none,fill=none},rotate=-15,scale=.9]
                    \coordinate (a) at (0,3);
                    \coordinate (b) at (-2,0);
                    \coordinate (c) at (2,0);
                    
                    \foreach \i in {0,...,6}{
                        \coordinate (bc\i) at ({-2+4*\i/6},0);
                        \coordinate (ba\i) at ({-2+2*\i/6},{3*\i/6});}
                    \foreach \i in {0,...,5}{
                        \coordinate (ac\i) at ({2*\i/5},{3-3*\i/5});}
                        
                    \foreach \i [evaluate={\j={int(\i-1)}}] in {1,3}{
                        \draw [pattern={Lines[angle=\anggrn,distance=\spr]},pattern color=PineGreen]
                            (bc\i) -- (ba\i) -- (ba\j) -- (bc\j) -- cycle;
                        \draw [thick]
                            (bc\i) -- (ba\i) -- (ba\j) -- (bc\j) -- cycle;}
                    \draw [pattern={Lines[angle=\angred,distance=\spr]},pattern color=red]
                        (bc2) -- (ba2) -- (ba1) -- (bc1) -- cycle;
                    \draw [thick]
                        (bc2) -- (ba2) -- (ba1) -- (bc1) -- cycle;
                    \draw [pattern={Lines[angle=\angred,distance=\spr]},pattern color=red]
                        (c) -- (bc5) -- (ac4) -- cycle;
                    \draw [thick]
                        (c) -- (bc5) -- (ac4) -- cycle;
                    \draw [pattern={Lines[angle=\angred,distance=\spr]},pattern color=red]
                        (a) -- (ac1) -- (ba5) -- cycle;
                    \draw [thick]
                        (a) -- (ac1) -- (ba5) -- cycle;
                    \draw [pattern={Lines[angle=\anggrn,distance=\spr]},pattern color=PineGreen]
                        (bc4) -- (bc5) -- (ac4) -- (ac3) -- cycle;
                    \draw [thick]
                        (bc4) -- (bc5) -- (ac4) -- (ac3) -- cycle;
                    \draw [pattern={Lines[angle=\anggrn,distance=\spr]},pattern color=PineGreen]
                        (ba4) -- (ba5) -- (ac1) -- (ac2) -- cycle;
                    \draw [thick]
                        (ba4) -- (ba5) -- (ac1) -- (ac2) -- cycle;
                    \draw [pattern={Lines[angle=\angred,distance=\spr]},pattern color=red]
                        (bc3) -- (bc4) -- (ac3) -- (ac2) -- (ba4) -- (ba3) -- cycle;
                    \draw [thick]
                        (bc3) -- (bc4) -- (ac3) -- (ac2) -- (ba4) -- (ba3) -- cycle;
                        
                    \foreach \i [evaluate={\j={int(\i-1)}},evaluate={\k={int(\i+1)}}] in {1,3,5}{
                        \draw [ultra thick,PineGreen,shorten <=2pt,shorten >=2pt,->]
                            ([yshift=+3pt]bc\j) -- ([yshift=+3pt]bc\i);
                        \draw [ultra thick,red,shorten <=2pt,shorten >=2pt,->]
                            ([yshift=+3pt]bc\i) -- ([yshift=+3pt]bc\k);}
                        
                    \foreach \i [evaluate={\j={int(\i-1)}},evaluate={\k={int(\i+1)}}] in {1,3,5}{
                        \ifnum\i>1
                            \draw [ultra thick,PineGreen,shorten <=3pt,shorten >=3pt,->]
                                ([xshift=+3pt,yshift=-2pt]ba\j) -- ([xshift=+3pt,yshift=-2pt]ba\i)
                        \fi;
                        \draw [ultra thick,red,shorten <=3pt,shorten >=3pt,->]
                            ([xshift=+3pt,yshift=-2pt]ba\i) -- ([xshift=+3pt,yshift=-2pt]ba\k);}
                            
                    \draw [ultra thick,PineGreen,shorten <=3pt,shorten >=3pt,<-]
                        ([xshift=-3pt,yshift=-2pt]ac1) -- ([xshift=-3pt,yshift=-2pt]ac2);
                    \draw [ultra thick,PineGreen,shorten <=3pt,shorten >=3pt,->]
                        ([xshift=-3pt,yshift=-2pt]ac3) -- ([xshift=-3pt,yshift=-2pt]ac4);
                    \draw [ultra thick,red,shorten <=3pt,shorten >=3pt,->]
                        ([xshift=-3pt,yshift=-2pt]ac2) -- ([xshift=-3pt,yshift=-2pt]ac3);
                    
                    \draw [ultra thick,shorten <=2pt,shorten >=2pt,->,RoyalPurple] ([xshift=-7pt]ba3) -- ([xshift=-7pt]b)
                        node [midway,above left] {$u$};
                    \draw [ultra thick,shorten <=2pt,shorten >=2pt,<-] ([xshift=-7pt]ba3) -- ([xshift=-7pt]ba4)
                        node [midway,above left] {$x_3$};
                    \draw [ultra thick,shorten <=2pt,shorten >=2pt,->,RoyalPurple] ([xshift=-7pt]ba4) -- ([xshift=-7pt]a)
                        node [midway,above left] {$w$};
                    \draw [ultra thick,shorten <=2pt,shorten >=2pt,->,RoyalPurple] ([xshift=7pt]ac2) -- ([xshift=7pt]a)
                        node [midway,above right] {$w$};
                    \draw [ultra thick,shorten <=2pt,shorten >=2pt,<-] ([xshift=7pt]ac2) -- ([xshift=7pt]ac3)
                        node [midway,above right] {$x_2$};
                    \draw [ultra thick,shorten <=2pt,shorten >=2pt,->,RoyalPurple] ([xshift=7pt]ac3) -- ([xshift=7pt]c)
                        node [midway,above right] {$v$};
                    \draw [ultra thick,shorten <=2pt,shorten >=2pt,->,RoyalPurple] ([yshift=-7pt]bc3) -- ([yshift=-7pt]b)
                        node [midway,below] {$u$};
                    \draw [ultra thick,shorten <=2pt,shorten >=2pt,<-] ([yshift=-7pt]bc4) -- ([yshift=-7pt]bc3)
                        node [midway,below] {$x_1$};
                    \draw [ultra thick,shorten <=2pt,shorten >=2pt,->,RoyalPurple] ([yshift=-7pt]bc4) -- ([yshift=-7pt]c)
                        node [midway,below] {$v$};
                \end{tikzpicture}
                \caption{Non-degenerate case.}\label{fig:new-discs-ndg}
            \end{subfigure}
            \caption{Extension of stripes across cellular discs.}\label{fig:new-discs}
        \end{figure}
        
        This construction yields a new cellular structure on $\Sigma$, with a decomposition into vertex discs (which remain unchanged), $a$- and $b$-stripes, and $a$- and $b$-hexagons --- see Figure \ref{fig:zebra}.
        We call this data a \emph{stripe pattern}.
        
        It is important to note that the new cellular structure does not correspond to any transverse map to a bouquet of two circles: such a map can be defined on $a$- and $b$-stripes as explained above, but it cannot be extended to hexagons since their boundary maps to a non-trivial element of $F_2$ (either $a^{\pm1}$ or $b^{\pm1}$).
        
        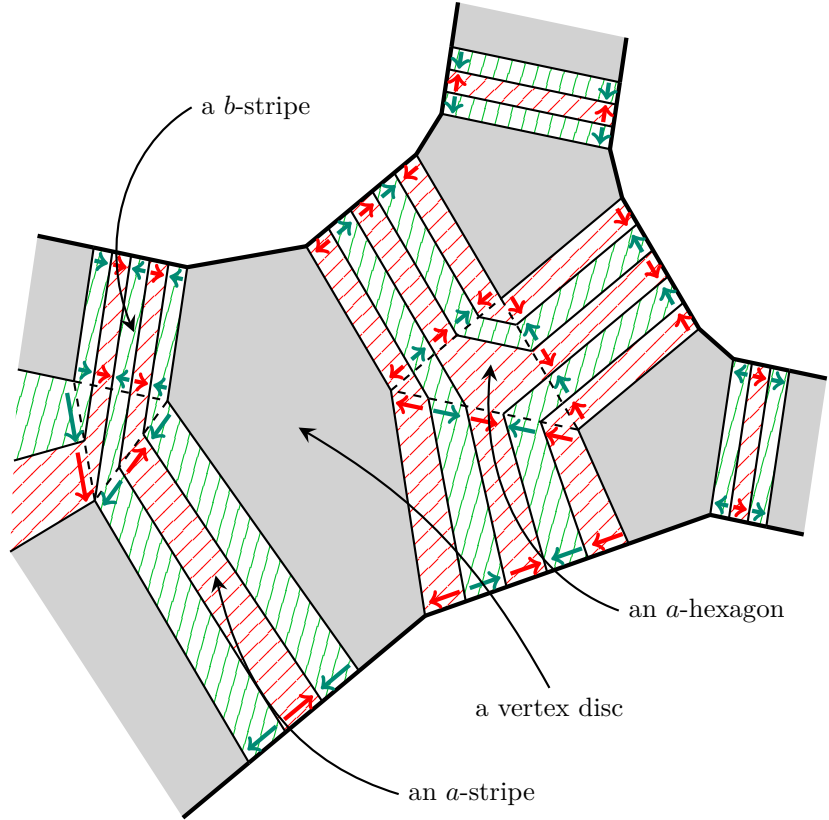
\begin{figure}[htb]
            \centering
            \begin{tikzpicture}[every node/.style={rectangle,draw=none,fill=none},xscale=.5,yscale=.6,rotate=-10]
                \coordinate (x0) at (-5.5,-5.5);
                \coordinate (x1) at (-4,-4);
                \coordinate (x2) at (-1.5,-1.5);
                \coordinate (x3) at (0,0);
                \coordinate (x4) at (5,2.5);
                \coordinate (x5) at (7,3.5);
                \coordinate (x6) at (8.5,3.5);
                \coordinate (x7) at (9.5,3.5);
                
                \coordinate (y0) at (9.5,7);
                \coordinate (y1) at (8.5,7);
                \coordinate (y2) at (7,7);
                \coordinate (y3) at (6,7.5);
                \coordinate (y4) at (3.5,10);
                \coordinate (y5) at (3,11);
                \coordinate (y6) at (3,12.5);
                \coordinate (y7) at (3,13.5);
                
                \coordinate (z0) at (-1.5,13.5);
                \coordinate (z1) at (-1.5,12.5);
                \coordinate (z2) at (-1.5,11);
                \coordinate (z3) at (-2,10);
                \coordinate (z4) at (-4.5,7.5);
                \coordinate (z5) at (-7.5,6.5);
                \coordinate (z6) at (-10,6.5);
                \coordinate (z7) at (-11.5,6.5);
                    
                \coordinate (u0) at (-1.75,4.75);
                \coordinate (u1) at (.75,7.25);
                \coordinate (u2) at (3.25,4.75);
                
                \coordinate (v0) at (-10,3.5);
                \coordinate (v1) at (-7.5,3.5);
                \coordinate (v2) at (-9,1);
                
                \coordinate (w0) at (-11.5,3.5);
                \coordinate (w1) at (-11,-.5);
                
                \foreach \i in {1,...,4}{
                    \coordinate (x34\i) at ({\i},{.5*\i});
                    \coordinate (u02\i) at ({-1.75+\i},{4.75});
                    \coordinate (z34\i) at ({-2-.5*\i},{10-.5*\i});
                    \coordinate (u10\i) at ({.75-.5*\i},{7.25-.5*\i});
                    \coordinate (y34\i) at ({6-.5*\i},{7.5+.5*\i});
                    \coordinate (u21\i) at ({3.25-.5*\i},{4.75+.5*\i});
                    \coordinate (z56\i) at ({-7.5-.5*\i},{6.5});
                    \coordinate (v10\i) at ({-7.5-.5*\i},{3.5});}
                    
                \foreach \i in {1,2}{
                    \coordinate (x56\i) at ({7+.5*\i},{3.5});
                    \coordinate (y21\i) at ({7+.5*\i},{7});
                    \coordinate (y56\i) at ({3},{11+.5*\i});
                    \coordinate (z21\i) at ({-1.5},{11+.5*\i});
                    \coordinate (x12\i) at ({-4+2.5*\i/3},{-4+2.5*\i/3});
                    \coordinate (v12\i) at ({-9+.5*\i},{1+2.5*\i/3});}
                    
                \coordinate (w011) at (-11.25,1.5);
                \coordinate (v021) at (-9.5,2.25);
                
                \draw [draw=none,pattern={Lines[angle=\anggrn,distance=\spr]},pattern color=PineGreen]
                    (z1) -- (z212) -- (y562) -- (y6) -- cycle;
                \draw [draw=none,pattern={Lines[angle=\anggrn,distance=\spr]},pattern color=PineGreen]
                    (z2) -- (z211) -- (y561) -- (y5) -- cycle;
                \draw [draw=none,pattern={Lines[angle=\anggrn,distance=\spr]},pattern color=PineGreen]
                    (z341) -- (z342) -- (u102) -- (u213) -- (y343) -- (y344) -- (u214) -- (u101) -- cycle;
                \draw [draw=none,pattern={Lines[angle=\anggrn,distance=\spr]},pattern color=PineGreen]
                    (z343) -- (z344) -- (u104) -- (u021) -- (x341) -- (x342) -- (u022) -- (u103) -- cycle;
                \draw [draw=none,pattern={Lines[angle=\anggrn,distance=\spr]},pattern color=PineGreen]
                    (y341) -- (y342) -- (u212) -- (u023) -- (x343) -- (x344) -- (u024) -- (u211) -- cycle;
                \draw [draw=none,pattern={Lines[angle=\anggrn,distance=\spr]},pattern color=PineGreen]
                    (x5) -- (x561) -- (y211) -- (y2) -- cycle;
                \draw [draw=none,pattern={Lines[angle=\anggrn,distance=\spr]},pattern color=PineGreen]
                    (x6) -- (x562) -- (y212) -- (y1) -- cycle;
                \draw [draw=none,pattern={Lines[angle=\anggrn,distance=\spr]},pattern color=PineGreen]
                    (z5) -- (z561) -- (v101) -- (v122) -- (x122) -- (x2) -- (v1) -- cycle;
                \draw [draw=none,pattern={Lines[angle=\anggrn,distance=\spr]},pattern color=PineGreen]
                    (z562) -- (z563) -- (v103) -- (v2) -- (x1) -- (x121) -- (v121) -- (v102) -- cycle;
                \draw [draw=none,pattern={Lines[angle=\anggrn,distance=\spr]},pattern color=PineGreen]
                    (z6) -- (z564) -- (v104) -- (v021) -- (w011) -- (w0) -- (v0) -- cycle;
                
                \draw [draw=none,pattern={Lines[angle=\angred,distance=\spr]},pattern color=red]
                    (y561) -- (y562) -- (z212) -- (z211) -- cycle;
                \draw [draw=none,pattern={Lines[angle=\angred,distance=\spr]},pattern color=red]
                    (x561) -- (x562) -- (y212) -- (y211) -- cycle;
                \draw [draw=none,pattern={Lines[angle=\angred,distance=\spr]},pattern color=red]
                    (y3) -- (y341) -- (u211) -- (u024) -- (x344) -- (x4) -- (u2);
                \draw [draw=none,pattern={Lines[angle=\angred,distance=\spr]},pattern color=red]
                    (y4) -- (y344) -- (u214) -- (u101) -- (z341) -- (z3) -- (u1);
                \draw [draw=none,pattern={Lines[angle=\angred,distance=\spr]},pattern color=red]
                    (x3) -- (x341) -- (u021) -- (u104) -- (z344) -- (z4) -- (u0);
                \draw [draw=none,pattern={Lines[angle=\angred,distance=\spr]},pattern color=red]
                    (x342) -- (x343) -- (u023) -- (u212) -- (y342) -- (y343) -- (u213) -- (u102) -- (z342) -- (z343) -- (u103) -- (u022);
                \draw [draw=none,pattern={Lines[angle=\angred,distance=\spr]},pattern color=red]
                    (z561) -- (z562) -- (v102) -- (v121) -- (x121) -- (x122) -- (v122) -- (v101) -- cycle;
                \draw [draw=none,pattern={Lines[angle=\angred,distance=\spr]},pattern color=red]
                    (z564) -- (z563) -- (v103) -- (v2) -- (w1) -- (w011) -- (v021) -- (v104) -- cycle;
                    
                \draw [draw=none,fill=gray!35] (x6) -- (x7) -- (y0) -- (y1) -- cycle;
                \draw [draw=none,fill=gray!35] (y6) -- (y7) -- (z0) -- (z1) -- cycle;
                \draw [draw=none,fill=gray!35] (z6) -- (z7) -- (w0) -- (v0) -- cycle;
                \draw [draw=none,fill=gray!35] (v2) -- (w1) -- (x0) -- (x1) -- cycle;
                
                \draw [thick,fill=gray!35] (x4) -- (x5) -- (y2) -- (y3) -- (u2) -- cycle;
                \draw [thick,fill=gray!35] (z2) -- (z3) -- (u1) -- (y4) -- (y5) -- cycle;
                \draw [thick,fill=gray!35] (x2) -- (x3) -- (u0) -- (z4) -- (z5) -- (v1) -- cycle;
                
                \foreach \i [evaluate={\j={\i+1}}] in {0,...,6} {
                    \draw [ultra thick] (x\i) -- (x\j) (y\i) -- (y\j) (z\i) -- (z\j);}
                
                \foreach \i [evaluate={\j={mod(\i+1,3)}}] in {0,1,2} {
                    \draw [thick,dashed] (u\i) -- (u\j) (v\i) -- (v\j);}
                    
                \draw [thick] (y1) -- (x6) (z1) -- (y6) (v2) -- (x1) (z6) -- (v0) (w0) -- (v0) (v2) -- (w1);
                
                \foreach \i in {1,...,4}{
                    \draw [thick] (x34\i) -- (u02\i) (z34\i) -- (u10\i) (y34\i) -- (u21\i) (z56\i) -- (v10\i);}
                    
                \draw [thick] (u021) -- (u104) (u101) -- (u214) (u211) -- (u024)
                    (u022) -- (u103) (u102) -- (u213) (u212) -- (u023);
                    
                \foreach \i in {1,2}{
                    \draw [thick] (x56\i) -- (y21\i) (y56\i) -- (z21\i) (x12\i) -- (v12\i);}
                    
                \draw [thick] (w011) -- (v021)
                    (v101) -- (v122) (v102) -- (v121) (v103) -- (v2) (v104) -- (v021);
                    
                \draw [ultra thick,red,shorten <=2pt,shorten >=2pt,<-]
                    ([xshift=-2pt,yshift=7pt]x3) -- ([xshift=-2pt,yshift=7pt]x341);
                    
                \draw [ultra thick,PineGreen,shorten <=2pt,shorten >=2pt,->]
                    ([xshift=-2pt,yshift=7pt]x341) -- ([xshift=-2pt,yshift=7pt]x342);
                    
                \draw [ultra thick,red,shorten <=2pt,shorten >=2pt,->]
                    ([xshift=-2pt,yshift=7pt]x342) -- ([xshift=-2pt,yshift=7pt]x343);
                    
                \draw [ultra thick,PineGreen,shorten <=2pt,shorten >=2pt,<-]
                    ([xshift=-2pt,yshift=7pt]x343) -- ([xshift=-2pt,yshift=7pt]x344);
                    
                \draw [ultra thick,red,shorten <=2pt,shorten >=2pt,<-]
                    ([xshift=-2pt,yshift=7pt]x344) -- ([xshift=-2pt,yshift=7pt]x4);
                    
                \draw [ultra thick,red,shorten <=2pt,shorten >=2pt,<-]
                    ([xshift=1pt,yshift=-7pt]u0) -- ([xshift=1pt,yshift=-7pt]u021);
                    
                \draw [ultra thick,PineGreen,shorten <=2pt,shorten >=2pt,->]
                    ([xshift=1pt,yshift=-7pt]u021) -- ([xshift=1pt,yshift=-7pt]u022);
                    
                \draw [ultra thick,red,shorten <=2pt,shorten >=2pt,->]
                    ([xshift=1pt,yshift=-7pt]u022) -- ([xshift=1pt,yshift=-7pt]u023);
                    
                \draw [ultra thick,PineGreen,shorten <=2pt,shorten >=2pt,<-]
                    ([xshift=1pt,yshift=-7pt]u023) -- ([xshift=1pt,yshift=-7pt]u024);
                    
                \draw [ultra thick,red,shorten <=2pt,shorten >=2pt,<-]
                    ([xshift=1pt,yshift=-7pt]u024) -- ([xshift=1pt,yshift=-7pt]u2);
                    
                \draw [ultra thick,red,shorten <=2pt,shorten >=2pt,<-]
                    ([xshift=-5pt,yshift=5pt]u0) -- ([xshift=-5pt,yshift=5pt]u104);
                    
                \draw [ultra thick,PineGreen,shorten <=2pt,shorten >=2pt,->]
                    ([xshift=-5pt,yshift=5pt]u104) -- ([xshift=-5pt,yshift=5pt]u103);
                    
                \draw [ultra thick,red,shorten <=2pt,shorten >=2pt,->]
                    ([xshift=-5pt,yshift=5pt]u103) -- ([xshift=-5pt,yshift=5pt]u102);
                    
                \draw [ultra thick,PineGreen,shorten <=2pt,shorten >=2pt,->]
                    ([xshift=-5pt,yshift=5pt]u102) -- ([xshift=-5pt,yshift=5pt]u101);
                    
                \draw [ultra thick,red,shorten <=2pt,shorten >=2pt,<-]
                    ([xshift=-5pt,yshift=5pt]u101) -- ([xshift=-5pt,yshift=5pt]u1);
                    
                \draw [ultra thick,red,shorten <=2pt,shorten >=2pt,<-]
                    ([xshift=5pt,yshift=-5pt]z4) -- ([xshift=5pt,yshift=-5pt]z344);
                    
                \draw [ultra thick,PineGreen,shorten <=2pt,shorten >=2pt,->]
                    ([xshift=5pt,yshift=-5pt]z344) -- ([xshift=5pt,yshift=-5pt]z343);
                    
                \draw [ultra thick,red,shorten <=2pt,shorten >=2pt,->]
                    ([xshift=5pt,yshift=-5pt]z343) -- ([xshift=5pt,yshift=-5pt]z342);
                    
                \draw [ultra thick,PineGreen,shorten <=2pt,shorten >=2pt,->]
                    ([xshift=5pt,yshift=-5pt]z342) -- ([xshift=5pt,yshift=-5pt]z341);
                    
                \draw [ultra thick,red,shorten <=2pt,shorten >=2pt,<-]
                    ([xshift=5pt,yshift=-5pt]z341) -- ([xshift=5pt,yshift=-5pt]z3);
                    
                \draw [ultra thick,red,shorten <=2pt,shorten >=2pt,->]
                    ([xshift=-5pt,yshift=-5pt]y3) -- ([xshift=-5pt,yshift=-5pt]y341);
                    
                \draw [ultra thick,PineGreen,shorten <=2pt,shorten >=2pt,->]
                    ([xshift=-5pt,yshift=-5pt]y341) -- ([xshift=-5pt,yshift=-5pt]y342);
                    
                \draw [ultra thick,red,shorten <=2pt,shorten >=2pt,<-]
                    ([xshift=-5pt,yshift=-5pt]y342) -- ([xshift=-5pt,yshift=-5pt]y343);
                    
                \draw [ultra thick,PineGreen,shorten <=2pt,shorten >=2pt,->]
                    ([xshift=-5pt,yshift=-5pt]y343) -- ([xshift=-5pt,yshift=-5pt]y344);
                    
                \draw [ultra thick,red,shorten <=2pt,shorten >=2pt,<-]
                    ([xshift=-5pt,yshift=-5pt]y344) -- ([xshift=-5pt,yshift=-5pt]y4);
                    
                \draw [ultra thick,red,shorten <=2pt,shorten >=2pt,->]
                    ([xshift=5pt,yshift=5pt]u1) -- ([xshift=5pt,yshift=5pt]u214);
                    
                \draw [ultra thick,PineGreen,shorten <=2pt,shorten >=2pt,<-]
                    ([xshift=5pt,yshift=5pt]u214) -- ([xshift=5pt,yshift=5pt]u213);
                    
                \draw [ultra thick,red,shorten <=2pt,shorten >=2pt,->]
                    ([xshift=5pt,yshift=5pt]u213) -- ([xshift=5pt,yshift=5pt]u212);
                    
                \draw [ultra thick,PineGreen,shorten <=2pt,shorten >=2pt,<-]
                    ([xshift=5pt,yshift=5pt]u212) -- ([xshift=5pt,yshift=5pt]u211);
                    
                \draw [ultra thick,red,shorten <=2pt,shorten >=2pt,<-]
                    ([xshift=5pt,yshift=5pt]u211) -- ([xshift=5pt,yshift=5pt]u2);
                    
                \draw [ultra thick,PineGreen,shorten <=2pt,shorten >=2pt,<-]
                    ([xshift=-5pt,yshift=5pt]x1) -- ([xshift=-5pt,yshift=5pt]x121);
                    
                \draw [ultra thick,red,shorten <=2pt,shorten >=2pt,->]
                    ([xshift=-5pt,yshift=5pt]x121) -- ([xshift=-5pt,yshift=5pt]x122);
                    
                \draw [ultra thick,PineGreen,shorten <=2pt,shorten >=2pt,<-]
                    ([xshift=-5pt,yshift=5pt]x122) -- ([xshift=-5pt,yshift=5pt]x2);
                    
                \draw [ultra thick,PineGreen,shorten <=2pt,shorten >=2pt,->]
                    ([xshift=5pt,yshift=-5pt]v1) -- ([xshift=5pt,yshift=-5pt]v122);
                    
                \draw [ultra thick,red,shorten <=2pt,shorten >=2pt,->]
                    ([xshift=5pt,yshift=-5pt]v121) -- ([xshift=5pt,yshift=-5pt]v122);
                    
                \draw [ultra thick,PineGreen,shorten <=2pt,shorten >=2pt,->]
                    ([xshift=5pt,yshift=-5pt]v121) -- ([xshift=5pt,yshift=-5pt]v2);
                    
                \draw [ultra thick,red,shorten <=2pt,shorten >=2pt,<-]
                    ([xshift=-5pt,yshift=-5pt]v2) -- ([xshift=-5pt,yshift=-5pt]v021);
                    
                \draw [ultra thick,PineGreen,shorten <=2pt,shorten >=2pt,<-]
                    ([xshift=-5pt,yshift=-5pt]v021) -- ([xshift=-5pt,yshift=-5pt]v0);
                    
                \draw [ultra thick,PineGreen,shorten <=1pt,shorten >=1pt,->]
                    ([yshift=-8pt]z6) -- ([yshift=-8pt]z564);
                    
                \draw [ultra thick,red,shorten <=1pt,shorten >=1pt,->]
                    ([yshift=-8pt]z564) -- ([yshift=-8pt]z563);
                    
                \draw [ultra thick,PineGreen,shorten <=1pt,shorten >=1pt,<-]
                    ([yshift=-8pt]z563) -- ([yshift=-8pt]z562);
                    
                \draw [ultra thick,red,shorten <=1pt,shorten >=1pt,->]
                    ([yshift=-8pt]z562) -- ([yshift=-8pt]z561);
                    
                \draw [ultra thick,PineGreen,shorten <=1pt,shorten >=1pt,<-]
                    ([yshift=-8pt]z561) -- ([yshift=-8pt]z5);
                    
                \draw [ultra thick,PineGreen,shorten <=1pt,shorten >=1pt,->]
                    ([yshift=8pt]v0) -- ([yshift=8pt]v104);
                    
                \draw [ultra thick,red,shorten <=1pt,shorten >=1pt,->]
                    ([yshift=8pt]v104) -- ([yshift=8pt]v103);
                    
                \draw [ultra thick,PineGreen,shorten <=1pt,shorten >=1pt,<-]
                    ([yshift=8pt]v103) -- ([yshift=8pt]v102);
                    
                \draw [ultra thick,red,shorten <=1pt,shorten >=1pt,->]
                    ([yshift=8pt]v102) -- ([yshift=8pt]v101);
                    
                \draw [ultra thick,PineGreen,shorten <=1pt,shorten >=1pt,<-]
                    ([yshift=8pt]v101) -- ([yshift=8pt]v1);
                    
                \draw [ultra thick,PineGreen,shorten <=1pt,shorten >=1pt,<-]
                    ([yshift=8pt]x5) -- ([yshift=8pt]x561);
                    
                \draw [ultra thick,red,shorten <=1pt,shorten >=1pt,->]
                    ([yshift=8pt]x561) -- ([yshift=8pt]x562);
                    
                \draw [ultra thick,PineGreen,shorten <=1pt,shorten >=1pt,->]
                    ([yshift=8pt]x562) -- ([yshift=8pt]x6);
                    
                \draw [ultra thick,PineGreen,shorten <=1pt,shorten >=1pt,<-]
                    ([yshift=-8pt]y2) -- ([yshift=-8pt]y211);
                    
                \draw [ultra thick,red,shorten <=1pt,shorten >=1pt,->]
                    ([yshift=-8pt]y211) -- ([yshift=-8pt]y212);
                    
                \draw [ultra thick,PineGreen,shorten <=1pt,shorten >=1pt,->]
                    ([yshift=-8pt]y212) -- ([yshift=-8pt]y1);
                    
                \draw [ultra thick,PineGreen,shorten <=1pt,shorten >=1pt,<-]
                    ([xshift=-8pt]y5) -- ([xshift=-8pt]y561);
                    
                \draw [ultra thick,red,shorten <=1pt,shorten >=1pt,->]
                    ([xshift=-8pt]y561) -- ([xshift=-8pt]y562);
                    
                \draw [ultra thick,PineGreen,shorten <=1pt,shorten >=1pt,<-]
                    ([xshift=-8pt]y562) -- ([xshift=-8pt]y6);
                    
                \draw [ultra thick,PineGreen,shorten <=1pt,shorten >=1pt,<-]
                    ([xshift=8pt]z2) -- ([xshift=8pt]z211);
                    
                \draw [ultra thick,red,shorten <=1pt,shorten >=1pt,->]
                    ([xshift=8pt]z211) -- ([xshift=8pt]z212);
                    
                \draw [ultra thick,PineGreen,shorten <=1pt,shorten >=1pt,<-]
                    ([xshift=8pt]z212) -- ([xshift=8pt]z1);
                        
                \begin{scope}[decoration={
                    markings,
                    mark=at position 0 with {\arrow[scale=1.5,>=stealth,xshift=2.5\pgflinewidth]{<}}}
                    ]
                    \draw [thick,postaction=decorate] (barycentric cs:u0=1,u1=1,u2=1) to [bend right=40] (5,1)
                        node [right] {an $a$-hexagon};
                    
                    \draw [thick,postaction=decorate] (barycentric cs:x1=1,x2=1,v1=1,v2=1) to [bend right=30] (0,-4)
                        node [right] {an $a$-stripe};
                    
                    \draw [thick,postaction=decorate] (barycentric cs:z5=1,z6=1,v1=1,v0=1) to [bend left=45] (-8,10)
                        node [right] {a $b$-stripe};
                        
                    \draw [thick,postaction=decorate] (barycentric cs:x2=1,x3=1,u0=1,z4=1,z5=1,v1=1) to [bend left=15] (3.5,-1)
                        node [below] {a vertex disc};
                \end{scope}
            \end{tikzpicture}
            \caption{Part of a stripe pattern.}\label{fig:zebra}
        \end{figure}
        
    \subsection{Boundary labelling}\label{subsec:bnd-lbl}
        
        We now turn our attention to the boundary of $\Sigma$.
        The (oriented) boundary edges of $\Sigma$ are either part of a vertex disc, or they bound a stripe and are then labelled by some letter in $\left\{a^{\pm1},b^{\pm1}\right\}$.
        Moreover, the concatenation of the oriented labels of the boundary edges along a $1$-handle $\mathcal{H}$ of $\Sigma$ mapping via $f$ to $g_\mathcal{H}$ is the (reduced, but possibly not cyclically reduced) word $\Phi\left(g_\mathcal{H}\right)$ (see Figure \ref{fig:new-handle}).
        Now pick a boundary component $\partial_j$ of $\Sigma$.
        The image of $\partial_j$ under $f$ represents a word $g_1\cdots g_\ell=g^k$ for some $k\in\mathbb{N}_{\geq1}$, where each $g_i$ is an element in $\pi_1X$ represented by a single edge loop in $X$, and $g$ is the element of $\pi_1X$ with respect to which $f:\Sigma\rightarrow X$ is an admissible surface.
        It follows that the labelling of $\partial_j$ obtained by concatenating the labels of the $a$- and $b$-stripes along it is a (not necessarily reduced) word representing $\Phi\left(g_1\right)\cdots\Phi\left(g_\ell\right)$.
        
        We want to modify $\Sigma$ so that the labelling of $\partial_j$ becomes $\Phi\left(g_1\cdots g_\ell\right)=\Phi\left(g\right)^k$.
        We will use the letter-quasimorphism condition to do so.
        Consider two successive sections of $\partial_j$ that are labelled by $\Phi\left(g_i\right)$ and $\Phi\left(g_{i+1}\right)$.
        Definition \ref{def:let-qm} says that either $\Phi\left(g_i\right)\Phi\left(g_{i+1}\right)=\Phi\left(g_ig_{i+1}\right)$ (in the degenerate case), or there are alternating words $u,v,w\in F_2$, and letters $x_1,x_2,x_3$ in $\left\{a^{\pm1}\right\}$ or $\left\{b^{\pm1}\right\}$ such that
        \[
            \Phi\left(g_i\right)=u^{-1}x_1v,\quad\Phi\left(g_{i+1}\right)=v^{-1}x_2w,\quad\Phi\left(g_ig_{i+1}\right)^{-1}=w^{-1}x_3u
        \]
        (as reduced words).
        We can use this data to glue some new stripes and (in the non-degenerate case) one $a$- or $b$-hexagon to $\partial_j$ as in Figure \ref{fig:cor-bnd-lab}, modifying $\Sigma$ by a homeomorphism so that the portion of $\partial_j$ that was labelled by $\Phi\left(g_i\right)\Phi\left(g_{i+1}\right)$ is now labelled by $\Phi\left(g_ig_{i+1}\right)$.
        Note that this operation relies on the fact that the labelling of each $1$-handle on $\partial\Sigma$ is a reduced word, and this property still hold after gluing the new pieces.
        
        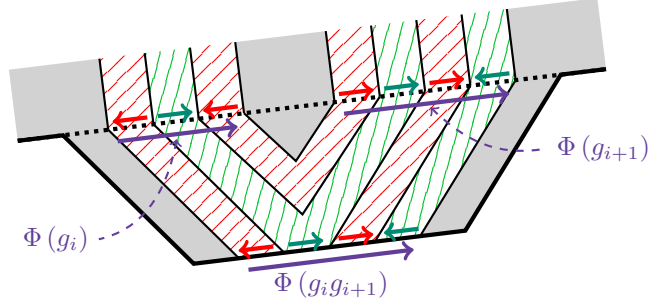
\begin{figure}[hbt]
            \centering
            \begin{tikzpicture}[every node/.style={rectangle,draw=none,fill=none},rotate=7,xscale=.6,yscale=.95]
                \foreach \i in {0,...,3}{
                    \coordinate (a\i) at (\i,0);
                    \coordinate (ap\i) at (\i,1);}
                \foreach \i in {0,...,4}{
                    \coordinate (b\i) at (\i+5,0);
                    \coordinate (bp\i) at (\i+5,1);
                    \coordinate (c\i) at (2.5+\i,-2);}
                \coordinate (d1) at (4,-.75);
                \coordinate (d2) at (4,-1.5);
                    
                \draw [draw=none,pattern={Lines[angle=\angred,distance=\spr]},pattern color=red]
                    (ap0) -- (a0) -- (c0) -- (c1) -- (a1) -- (ap1) -- cycle;
                \draw [draw=none,pattern={Lines[angle=\anggrn,distance=\spr]},pattern color=PineGreen]
                    (ap1) -- (a1) -- (c1) -- (c2) -- (b2) -- (bp2) -- (bp1) -- (b1) -- (d2) -- (a2) -- (ap2) -- cycle;
                \draw [draw=none,pattern={Lines[angle=\angred,distance=\spr]},pattern color=red]
                    (ap2) -- (a2) -- (d2) -- (b1) -- (bp1) -- (bp0) -- (b0) -- (d1) -- (a3) -- (ap3) -- cycle;
                \draw [draw=none,pattern={Lines[angle=\angred,distance=\spr]},pattern color=red]
                    (bp2) -- (b2) -- (c2) -- (c3) -- (b3) -- (bp3) -- cycle;
                \draw [draw=none,pattern={Lines[angle=\anggrn,distance=\spr]},pattern color=PineGreen]
                    (bp3) -- (b3) -- (c3) -- (c4) -- (b4) -- (bp4) -- cycle;
                
                \draw [draw=none,fill=gray!35] (-2,1) -- (ap0) -- (a0) -- (c0) -- (1.5,-2) -- (-1,0) -- (-2,0) -- cycle;
                \draw [draw=none,fill=gray!35] (11,1) -- (bp4) -- (b4) -- (c4) -- (7.5,-2) -- (10,0) -- (11,0) -- cycle;
                \draw [draw=none,fill=gray!35] (ap3) -- (a3) -- (d1) -- (b0) -- (bp0) -- cycle;
                
                \foreach \i in {0,...,3}{
                    \draw [thick] (ap\i) -- (a\i);}
                \foreach \i in {0,...,4}{
                    \draw [thick] (bp\i) -- (b\i);}
                \draw [thick] (a0) -- (c0) (a1) -- (c1) (a2) -- (d2) -- (b1) (a3) -- (d1) -- (b0)
                    (b2) -- (c2) (b3) -- (c3) (b4) -- (c4);
                    
                \draw [ultra thick,dotted] (-2,0) -- (11,0);
                \draw [ultra thick] (-2,0) -- (-1,0) -- (1.5,-2) -- (7.5,-2) -- (10,0) -- (11,0);
                
                \foreach \i [evaluate={\j={int(\i+1)}}] in {0,2}{
                    \draw [ultra thick,red,shorten <=2pt,shorten >=2pt,<-]
                        ([yshift=+3pt]a\i) -- ([yshift=+3pt]a\j);}
                \draw [ultra thick,PineGreen,shorten <=2pt,shorten >=2pt,->]
                    ([yshift=+3pt]b1) -- ([yshift=+3pt]b2);
                \draw [ultra thick,PineGreen,shorten <=2pt,shorten >=2pt,<-]
                    ([yshift=+3pt]b3) -- ([yshift=+3pt]b4);
                \draw [ultra thick,PineGreen,shorten <=2pt,shorten >=2pt,->]
                    ([yshift=+3pt]a1) -- ([yshift=+3pt]a2);
                \draw [ultra thick,red,shorten <=2pt,shorten >=2pt,<-]
                    ([xshift=-3pt,yshift=+3pt]c0) -- ([xshift=-3pt,yshift=+3pt]c1);
                \draw [ultra thick,PineGreen,shorten <=2pt,shorten >=2pt,->]
                    ([yshift=+3pt]c1) -- ([yshift=+3pt]c2);
                \draw [ultra thick,red,shorten <=2pt,shorten >=2pt,->]
                    ([xshift=3pt,yshift=+3pt]c2) -- ([xshift=3pt,yshift=+3pt]c3);
                \draw [ultra thick,PineGreen,shorten <=2pt,shorten >=2pt,<-]
                    ([xshift=3pt,yshift=+3pt]c3) -- ([xshift=3pt,yshift=+3pt]c4);
                \draw [ultra thick,red,shorten <=2pt,shorten >=2pt,->]
                    ([yshift=+3pt]b0) -- ([yshift=+3pt]b1);
                \draw [ultra thick,red,shorten <=2pt,shorten >=2pt,->]
                    ([yshift=+3pt]b2) -- ([yshift=+3pt]b3);
                    
                \draw [ultra thick,shorten <=3pt,shorten >=3pt,->,RoyalPurple] ([yshift=-5pt]a0) -- ([yshift=-5pt]a3) 
                    coordinate [midway,below] (lab1);
                \draw [<-,RoyalPurple,dashed,thick,shorten <=2pt] (lab1) to [bend left=30] (-.5,-1.5) node [left] {$\Phi\left(g_{i}\right)$};
                \draw [ultra thick,shorten <=3pt,shorten >=3pt,->,RoyalPurple] ([yshift=-5pt]b0) -- ([yshift=-5pt]b4)
                    coordinate [midway,below] (lab2);
                \draw [<-,RoyalPurple,dashed,thick,shorten <=2pt] (lab2) to [bend right=20] (9.5,-1) node [right] {$\Phi\left(g_{i+1}\right)$};
                \draw [ultra thick,shorten <=3pt,shorten >=3pt,->,RoyalPurple] ([yshift=-5pt]c0) -- ([yshift=-5pt]c4)
                    node [midway,below] {$\Phi\left(g_ig_{i+1}\right)$};
            \end{tikzpicture}
            \caption{Correcting the boundary labelling. (The former boundary is depicted as a dotted line and the new one as a full line.)}\label{fig:cor-bnd-lab}
        \end{figure}
        
        By repeating this process a finite number of times (as most as many times as the number of $1$-handles of $\Sigma$), we obtain a surface homeomorphic to $\Sigma$ with a new stripe pattern.
        Each boundary component is now labelled by a word representing some positive power of $\Phi(g)$ --- and which is therefore cyclically reduced.
        Notice for instance in Figure \ref{fig:cor-bnd-lab} that this construction has eliminated a pair of cancelling edges.
        
        Since boundary components are labelled by powers of $\Phi(g)=a_1b_1\cdots a_\ell b_\ell$, with $a_i\in\left\{a^{\pm1}\right\}$ and $b_i\in\left\{b^{\pm1}\right\}$, each boundary edge bounding an $a$-stripe corresponds to some $a_i$ and each boundary edge bounding a $b$-stripe corresponds to some $b_i$.
        We will remember the index $i$ as part of the stripe pattern.
        Hence, reading the successive indices of the edges along a boundary component of $\Sigma$ yields a cyclic permutation of a positive iterate of the sequence $\left(1,1,\dots,\ell,\ell\right)$.
        
        \begin{ex}
            Suppose that $X$ is a bouquet of two oriented circles labelled by $a$ and $b$, so that $\pi_1X=F(a,b)$.
            Let $g=[a,b]=aba^{-1}b^{-1}\in\pi_1X$.
            There is a letter-quasimorphism $\Phi:\pi_1X\rightarrow\mathcal{A}$ given by
            \[
                a^{m_1}b^{n_1}\cdots a^{m_k}b^{n_k}\longmapsto a^{\sign\left(m_1\right)}b^{\sign\left(n_1\right)}\cdots a^{\sign\left(m_k\right)}b^{\sign\left(n_k\right)},
            \]
            with $m_i,n_i\in\mathbb{Z}$, all non-zero except possibly for $m_1$ and $n_k$ (see \cite{heuer-raags}*{Example 4.2}).
            
            Applying the above construction to the admissible surface $f:\left(\Sigma,\partial\Sigma\right)\rightarrow\left(X,g\right)$ which is a once-punctured torus with generators mapping to $X$ in the standard way yields the stripe pattern of Figure \ref{fig:ex:stripeptrn}.
            The only boundary component of $\Sigma$ is labelled by $\left(1,1,2,2\right)$.

            \begin{figure}[htb]
                \centering
                \begin{tikzpicture}[every node/.style={rectangle,draw=none,fill=none},rotate=45,scale=.8]
                    \foreach \i in {0,...,3}{
                        \coordinate (x\i) at (\i*360/4: 3cm);
                        \coordinate (y\i) at (\i*360/4: .75cm);}
                    
                    \draw [draw=none,pattern={Lines[angle=\anggrn,distance=\spr]},pattern color=PineGreen]
                        (x0) -- (x1) -- (y1) -- (y0) -- cycle;
                    
                    \draw [draw=none,pattern={Lines[angle=\anggrn,distance=\spr]},pattern color=PineGreen]
                        (x2) -- (x3) -- (y3) -- (y2) -- cycle;
                    
                    \draw [draw=none,pattern={Lines[angle=\angred,distance=\spr]},pattern color=red]
                        (x1) -- (x2) -- (y2) -- (y1) -- cycle;
                    
                    \draw [draw=none,pattern={Lines[angle=\angred,distance=\spr]},pattern color=red]
                        (x0) -- (x3) -- (y3) -- (y0) -- cycle;
                    
                    \foreach \i in {0,...,3}{\draw [thick] (x\i) -- (y\i);}
                    
                    \foreach \i [evaluate={\j=int(mod(\i+1,4))}] in {0,...,3}{
                        \draw [ultra thick] (y\i) -- (y\j);}
                        
                    \begin{scope}[decoration={
                        markings,
                        mark=at position .5 with {\arrow[scale=1.5]{stealth}}}
                        ] 
                        \draw [thick,dashed,postaction={decorate}] (x1) -- (x0);
                        \draw [thick,dashed,postaction={decorate}] (x2) -- (x3);
                    \end{scope}
                    
                    \begin{scope}[decoration={
                        markings,
                        mark=at position .48 with {\arrow[scale=1.5]{stealth}},
                        mark=at position .52 with {\arrow[scale=1.5]{stealth}}}
                        ] 
                        \draw [thick,dashed,postaction={decorate}] (x2) -- (x1);
                        \draw [thick,dashed,postaction={decorate}] (x3) -- (x0);
                    \end{scope}
                            
                    \draw [ultra thick,PineGreen,->] ([xshift=4pt,yshift=4pt]y1) -- ([xshift=4pt,yshift=4pt]y0)
                        node [midway,above,circle,fill=white,draw=none,inner sep=.1pt] {$1$};
                    \draw [ultra thick,PineGreen,->] ([xshift=-4pt,yshift=-4pt]y2) -- ([xshift=-4pt,yshift=-4pt]y3)
                        node [midway,below,circle,fill=white,draw=none,inner sep=.1pt] {$2$};
                    \draw [ultra thick,red,->] ([xshift=4pt,yshift=-4pt]y3) -- ([xshift=4pt,yshift=-4pt]y0)
                        node [midway,right,circle,fill=white,draw=none,inner sep=.1pt] {$2$};
                    \draw [ultra thick,red,->] ([xshift=-4pt,yshift=4pt]y2) -- ([xshift=-4pt,yshift=4pt]y1)
                        node [midway,left,circle,fill=white,draw=none,inner sep=.1pt] {$1$};
                            
                    \draw [ultra thick,PineGreen,->,shorten <=10pt,shorten >=10pt] ([xshift=-4pt,yshift=-4pt]x1) -- ([xshift=-4pt,yshift=-4pt]x0);
                    \draw [ultra thick,PineGreen,->,shorten <=10pt,shorten >=10pt] ([xshift=4pt,yshift=4pt]x2) -- ([xshift=4pt,yshift=4pt]x3);
                    \draw [ultra thick,red,->,shorten <=10pt,shorten >=10pt] ([xshift=-4pt,yshift=4pt]x3) -- ([xshift=-4pt,yshift=4pt]x0);
                    \draw [ultra thick,red,->,shorten <=10pt,shorten >=10pt] ([xshift=4pt,yshift=-4pt]x2) -- ([xshift=4pt,yshift=-4pt]x1);
                    
                    \draw node at (-.5,1.4) {\Huge$\circlearrowleft$};
                \end{tikzpicture}
                \caption{Stripe pattern on an admissible surface for $g=[a,b]\in F(a,b)$ (black arrows indicate edge identifications).}\label{fig:ex:stripeptrn}
            \end{figure}
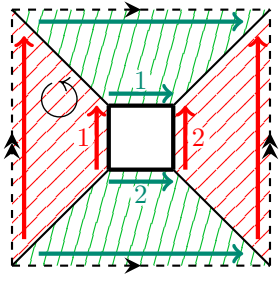
        \end{ex}
        
    \subsection{Unzipping \texorpdfstring{$\Sigma$}{Sigma}}
    
        With its current stripe pattern, $\Sigma$ is subdivided into:
        \begin{itemize}
            \item The union $\Sigma^D$ of its closed vertex discs,
            \item The union $\Sigma^a$ of its closed $a$-stripes and $a$-hexagons, and
            \item The union $\Sigma^b$ of its closed $b$-stripes and $b$-hexagons.
        \end{itemize}
        Consider the boundary of those different regions:
        \[
            \Gamma\coloneqq\left(\Sigma^a\cap\Sigma^b\right)\cup\left(\Sigma^a\cap\Sigma^D\right)\cup\left(\Sigma^b\cap\Sigma^D\right).
        \]
        By observing the local structure of the stripe pattern, one can see that $\Gamma$ is an embedded graph in $\Sigma$.
        
        We want to perform one last modification on the stripe pattern to remove all singular points of $\Gamma$ (i.e. to ensure that $\Gamma$ is a $1$-dimensional submanifold of $\Sigma$).
        Note that the only points where $\Gamma$ might not be locally homeomorphic to a line are the apices of degenerate cellular discs (see Figure \ref{fig:new-discs-dg}).
        We can \emph{unzip} each degenerate cellular disc as indicated in Figure \ref{fig:unzip}, extending an existing vertex disc between the $a$- and $b$-stripes meeting at the apex of the triangle.
        
        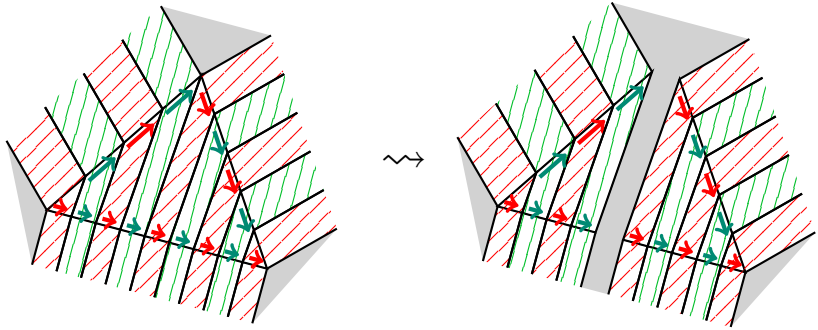
\begin{figure}[hbt]
            \centering
            \begin{subfigure}[c]{15em}
                \centering
                \begin{tikzpicture}[every node/.style={rectangle,draw=none,fill=none},rotate=-15,scale=.75]
                    \coordinate (a) at (0,3);
                    \coordinate (b) at (-2,0);
                    \coordinate (c) at (2,0);
                    
                    \foreach \i in {0,...,9}{
                        \coordinate (bc\i) at ({-2+4*\i/9},0);
                        \coordinate (bcz\i) at ({-2+4*\i/9},-1);}
                    \foreach \i in {0,...,4}{
                        \coordinate (ba\i) at ({-2+2*\i/4},{3*\i/4});
                        \coordinate (baz\i) at ({-2+2*\i/4-1},{3*\i/4+1});}
                    \foreach \i in {0,...,5}{
                        \coordinate (ac\i) at ({2*\i/5},{3-3*\i/5});
                        \coordinate (acz\i) at ({2*\i/5+1},{3-3*\i/5+1});}
                    
                    \draw [draw=none,fill=gray!35]
                            (a) -- (baz4) -- (acz0);
                    \draw [draw=none,fill=gray!35]
                            (b) -- (baz0) -- (bcz0);
                    \draw [draw=none,fill=gray!35]
                            (c) -- (acz5) -- (bcz9);
                    
                    \foreach \i [evaluate={\j={int(\i-1)}}] in {1,3}{
                        \draw [pattern={Lines[angle=\angred,distance=\spr]},pattern color=red]
                            (bc\i) -- (ba\i) -- (ba\j) -- (bc\j) -- cycle;
                        \draw [thick]
                            (bc\i) -- (ba\i) -- (ba\j) -- (bc\j) -- cycle;
                        \draw [draw=none,pattern={Lines[angle=\angred,distance=\spr]},pattern color=red]
                            (ba\i) -- (baz\i) -- (baz\j) -- (ba\j) -- cycle;
                        \draw [draw=none,pattern={Lines[angle=\angred,distance=\spr]},pattern color=red]
                            (bc\i) -- (bcz\i) -- (bcz\j) -- (bc\j) -- cycle;
                        \draw [thick] (ba\i) -- (baz\i) (ba\j) -- (baz\j);
                        \draw [thick] (bc\i) -- (bcz\i) (bc\j) -- (bcz\j);
                        \draw [ultra thick,red,shorten <=2pt,shorten >=2pt,->]
                            ([yshift=+3pt]bc\j) -- ([yshift=+3pt]bc\i);
                        \ifnum\j>1
                            \draw [ultra thick,red,shorten <=3pt,shorten >=3pt,->]
                                ([yshift=-5pt]ba\j) -- ([yshift=-5pt]ba\i);
                        \fi}
                            
                    \foreach \i [evaluate={\j={int(\i-1)}}] in {2,4}{
                        \draw [pattern={Lines[angle=\anggrn,distance=\spr]},pattern color=PineGreen]
                            (bc\i) -- (ba\i) -- (ba\j) -- (bc\j) -- cycle;
                        \draw [thick]
                            (bc\i) -- (ba\i) -- (ba\j) -- (bc\j) -- cycle;
                        \draw [draw=none,pattern={Lines[angle=\anggrn,distance=\spr]},pattern color=PineGreen]
                            (ba\i) -- (baz\i) -- (baz\j) -- (ba\j) -- cycle;
                        \draw [draw=none,pattern={Lines[angle=\anggrn,distance=\spr]},pattern color=PineGreen]
                            (bc\i) -- (bcz\i) -- (bcz\j) -- (bc\j) -- cycle;
                        \draw [thick] (ba\i) -- (baz\i) (ba\j) -- (baz\j);
                        \draw [thick] (bc\i) -- (bcz\i) (bc\j) -- (bcz\j);
                        \draw [ultra thick,PineGreen,shorten <=2pt,shorten >=2pt,->]
                            ([yshift=+3pt]bc\j) -- ([yshift=+3pt]bc\i);
                        \draw [ultra thick,PineGreen,shorten <=3pt,shorten >=3pt,->]
                            ([yshift=-5pt]ba\j) -- ([yshift=-5pt]ba\i);}
                            
                    \foreach \i [evaluate={\j={int(\i-1)}},evaluate={\ip={int(\i+4)}},evaluate={\jp={int(\j+4)}}] in {1,3,5}{
                        \draw [pattern={Lines[angle=\angred,distance=\spr]},pattern color=red]
                            (bc\ip) -- (ac\i) -- (ac\j) -- (bc\jp) -- cycle;
                        \draw [thick]
                            (bc\ip) -- (ac\i) -- (ac\j) -- (bc\jp) -- cycle;
                        \draw [draw=none,pattern={Lines[angle=\angred,distance=\spr]},pattern color=red]
                            (ac\i) -- (acz\i) -- (acz\j) -- (ac\j) -- cycle;
                        \draw [draw=none,pattern={Lines[angle=\angred,distance=\spr]},pattern color=red]
                            (bc\ip) -- (bcz\ip) -- (bcz\jp) -- (bc\jp) -- cycle;
                        \draw [thick] (ac\i) -- (acz\i) (ac\j) -- (acz\j);
                        \draw [thick] (bc\ip) -- (bcz\ip) (bc\jp) -- (bcz\jp);
                        \draw [ultra thick,red,shorten <=2pt,shorten >=2pt,->]
                            ([yshift=+3pt]bc\jp) -- ([yshift=+3pt]bc\ip);
                        \ifnum\i<5
                            \draw [ultra thick,red,shorten <=3pt,shorten >=3pt,->]
                                ([yshift=-5pt]ac\j) -- ([yshift=-5pt]ac\i);
                        \fi}
                        
                    \foreach \i [evaluate={\j={int(\i-1)}},evaluate={\ip={int(\i+4)}},evaluate={\jp={int(\j+4)}}] in {2,4}{
                        \draw [pattern={Lines[angle=\anggrn,distance=\spr]},pattern color=PineGreen]
                            (bc\ip) -- (ac\i) -- (ac\j) -- (bc\jp) -- cycle;
                        \draw [thick]
                            (bc\ip) -- (ac\i) -- (ac\j) -- (bc\jp) -- cycle;
                        \draw [draw=none,pattern={Lines[angle=\anggrn,distance=\spr]},pattern color=PineGreen]
                            (ac\i) -- (acz\i) -- (acz\j) -- (ac\j) -- cycle;
                        \draw [draw=none,pattern={Lines[angle=\anggrn,distance=\spr]},pattern color=PineGreen]
                            (bc\ip) -- (bcz\ip) -- (bcz\jp) -- (bc\jp) -- cycle;
                        \draw [thick] (ac\i) -- (acz\i) (ac\j) -- (acz\j);
                        \draw [thick] (bc\ip) -- (bcz\ip) (bc\jp) -- (bcz\jp);
                        \draw [ultra thick,PineGreen,shorten <=2pt,shorten >=2pt,->]
                            ([yshift=+3pt]bc\jp) -- ([yshift=+3pt]bc\ip);
                        \draw [ultra thick,PineGreen,shorten <=3pt,shorten >=3pt,->]
                            ([yshift=-5pt]ac\j) -- ([yshift=-5pt]ac\i);}
                \end{tikzpicture}
            \end{subfigure}
            $\mathlarger{\mathlarger{\mathlarger{\mathlarger{\rightsquigarrow}}}}$
            \begin{subfigure}[c]{15em}
                \centering
                \begin{tikzpicture}[every node/.style={rectangle,draw=none,fill=none},rotate=-15,scale=.75]
                    \coordinate (aL) at (-.25,3);
                    \coordinate (aR) at (.25,3);
                    \coordinate (b) at (-2.25,0);
                    \coordinate (c) at (2.25,0);
                    
                    \foreach \i in {0,...,4}{
                        \coordinate (bcL\i) at ({-2.25+4*\i/9},0);
                        \coordinate (bczL\i) at ({-2.25+4*\i/9},-1);}
                    \foreach \i in {4,...,9}{
                        \coordinate (bcR\i) at ({-1.75+4*\i/9},0);
                        \coordinate (bczR\i) at ({-1.75+4*\i/9},-1);}
                    \foreach \i in {0,...,4}{
                        \coordinate (ba\i) at ({-2.25+2*\i/4},{3*\i/4});
                        \coordinate (baz\i) at ({-2.25+2*\i/4-1},{3*\i/4+1});}
                    \foreach \i in {0,...,5}{
                        \coordinate (ac\i) at ({2*\i/5+.25},{3-3*\i/5});
                        \coordinate (acz\i) at ({2*\i/5+1.25},{3-3*\i/5+1});}
                    
                    \draw [draw=none,fill=gray!35]
                            (b) -- (baz0) -- (bczL0);
                    \draw [draw=none,fill=gray!35]
                            (c) -- (acz5) -- (bczR9);
                    \draw [draw=none,fill=gray!35]
                            (baz4) -- (ba4) -- (bcL4) -- (bczL4) -- (bczR4) -- (bcR4) -- (ac0) -- (acz0);
                    
                    \foreach \i [evaluate={\j={int(\i-1)}}] in {1,3}{
                        \draw [pattern={Lines[angle=\angred,distance=\spr]},pattern color=red]
                            (bcL\i) -- (ba\i) -- (ba\j) -- (bcL\j) -- cycle;
                        \draw [thick]
                            (bcL\i) -- (ba\i) -- (ba\j) -- (bcL\j) -- cycle;
                        \draw [draw=none,pattern={Lines[angle=\angred,distance=\spr]},pattern color=red]
                            (ba\i) -- (baz\i) -- (baz\j) -- (ba\j) -- cycle;
                        \draw [draw=none,pattern={Lines[angle=\angred,distance=\spr]},pattern color=red]
                            (bcL\i) -- (bczL\i) -- (bczL\j) -- (bcL\j) -- cycle;
                        \draw [thick] (ba\i) -- (baz\i) (ba\j) -- (baz\j);
                        \draw [thick] (bcL\i) -- (bczL\i) (bcL\j) -- (bczL\j);
                        \draw [ultra thick,red,shorten <=2pt,shorten >=2pt,->]
                            ([yshift=+3pt]bcL\j) -- ([yshift=+3pt]bcL\i);
                        \ifnum\j>1
                            \draw [ultra thick,red,shorten <=3pt,shorten >=3pt,->]
                                ([yshift=-5pt]ba\j) -- ([yshift=-5pt]ba\i);
                        \fi}
                            
                    \foreach \i [evaluate={\j={int(\i-1)}}] in {2,4}{
                        \draw [pattern={Lines[angle=\anggrn,distance=\spr]},pattern color=PineGreen]
                            (bcL\i) -- (ba\i) -- (ba\j) -- (bcL\j) -- cycle;
                        \draw [thick]
                            (bcL\i) -- (ba\i) -- (ba\j) -- (bcL\j) -- cycle;
                        \draw [draw=none,pattern={Lines[angle=\anggrn,distance=\spr]},pattern color=PineGreen]
                            (ba\i) -- (baz\i) -- (baz\j) -- (ba\j) -- cycle;
                        \draw [draw=none,pattern={Lines[angle=\anggrn,distance=\spr]},pattern color=PineGreen]
                            (bcL\i) -- (bczL\i) -- (bczL\j) -- (bcL\j) -- cycle;
                        \draw [thick] (ba\i) -- (baz\i) (ba\j) -- (baz\j);
                        \draw [thick] (bcL\i) -- (bczL\i) (bcL\j) -- (bczL\j);
                        \draw [ultra thick,PineGreen,shorten <=2pt,shorten >=2pt,->]
                            ([yshift=+3pt]bcL\j) -- ([yshift=+3pt]bcL\i);
                        \draw [ultra thick,PineGreen,shorten <=3pt,shorten >=3pt,->]
                            ([yshift=-5pt]ba\j) -- ([yshift=-5pt]ba\i);}
                            
                    \foreach \i [evaluate={\j={int(\i-1)}},evaluate={\ip={int(\i+4)}},evaluate={\jp={int(\j+4)}}] in {1,3,5}{
                        \draw [pattern={Lines[angle=\angred,distance=\spr]},pattern color=red]
                            (bcR\ip) -- (ac\i) -- (ac\j) -- (bcR\jp) -- cycle;
                        \draw [thick]
                            (bcR\ip) -- (ac\i) -- (ac\j) -- (bcR\jp) -- cycle;
                        \draw [draw=none,pattern={Lines[angle=\angred,distance=\spr]},pattern color=red]
                            (ac\i) -- (acz\i) -- (acz\j) -- (ac\j) -- cycle;
                        \draw [draw=none,pattern={Lines[angle=\angred,distance=\spr]},pattern color=red]
                            (bcR\ip) -- (bczR\ip) -- (bczR\jp) -- (bcR\jp) -- cycle;
                        \draw [thick] (ac\i) -- (acz\i) (ac\j) -- (acz\j);
                        \draw [thick] (bcR\ip) -- (bczR\ip) (bcR\jp) -- (bczR\jp);
                        \draw [ultra thick,red,shorten <=2pt,shorten >=2pt,->]
                            ([yshift=+3pt]bcR\jp) -- ([yshift=+3pt]bcR\ip);
                        \ifnum\i<5
                            \draw [ultra thick,red,shorten <=3pt,shorten >=3pt,->]
                                ([yshift=-5pt]ac\j) -- ([yshift=-5pt]ac\i);
                        \fi}
                        
                    \foreach \i [evaluate={\j={int(\i-1)}},evaluate={\ip={int(\i+4)}},evaluate={\jp={int(\j+4)}}] in {2,4}{
                        \draw [pattern={Lines[angle=\anggrn,distance=\spr]},pattern color=PineGreen]
                            (bcR\ip) -- (ac\i) -- (ac\j) -- (bcR\jp) -- cycle;
                        \draw [thick]
                            (bcR\ip) -- (ac\i) -- (ac\j) -- (bcR\jp) -- cycle;
                        \draw [draw=none,pattern={Lines[angle=\anggrn,distance=\spr]},pattern color=PineGreen]
                            (ac\i) -- (acz\i) -- (acz\j) -- (ac\j) -- cycle;
                        \draw [draw=none,pattern={Lines[angle=\anggrn,distance=\spr]},pattern color=PineGreen]
                            (bcR\ip) -- (bczR\ip) -- (bczR\jp) -- (bcR\jp) -- cycle;
                        \draw [thick] (ac\i) -- (acz\i) (ac\j) -- (acz\j);
                        \draw [thick] (bcR\ip) -- (bczR\ip) (bcR\jp) -- (bczR\jp);
                        \draw [ultra thick,PineGreen,shorten <=2pt,shorten >=2pt,->]
                            ([yshift=+3pt]bcR\jp) -- ([yshift=+3pt]bcR\ip);
                        \draw [ultra thick,PineGreen,shorten <=3pt,shorten >=3pt,->]
                            ([yshift=-5pt]ac\j) -- ([yshift=-5pt]ac\i);}
                \end{tikzpicture}
            \end{subfigure}
            \caption{Unzipping a degenerate cellular disc.}\label{fig:unzip}
        \end{figure}
    
        After unzipping one cellular disc, one needs to keep unzipping along the same leaf of $\Gamma$.
        One can iterate until there is no more degenerate cellular disc.
        After this process, $\Gamma$ is locally homeomorphic to a line.
        We call \emph{unzipping path} each of the paths in the original surface along which we have unzipped the cellular discs.
        Note that the unzipping process might have connected several vertex discs to one another; however, if any region resulting from these identifications contained a non-contractible simple closed curve $\beta$, then $\beta$ could be homotoped to an \emph{unzipping loop} --- i.e. an unzipping path which is a simple closed curve --- in the original surface; but then $\beta$ must be contractible by the following claim:
        \begin{claim*}
            Any unzipping loop has contractible image under $f:\Sigma\rightarrow X$ and is therefore contractible by incompressibility.
        \end{claim*}
        \begin{proof}
            Consider a cellular disc that is unzipped by the above process (this cellular disc is of the degenerate type --- see Figure \ref{fig:new-discs-dg}).
            Such a cellular disc had its boundary edges mapping to loops representing elements $g_1$, $g_2$, and $\left(g_1g_2\right)^{-1}$ of $\pi_1X$ under $f:\Sigma\rightarrow X$ (see \S{}\ref{subsec:ext-cell}).
            The restriction of any unzipping path to this cellular disc is the singular leaf (i.e. the middle edge in Figure \ref{fig:new-discs-dg}); this is homotopic in $\Sigma$ to the path following one of the boundary edges and half of the next edge, which maps to a loop representing $g_1g_1^{-1}=1$ in $\pi_1X$.
            
            Now any unzipping loop alternates between vertex discs in the original surface (which map under $f$ to the vertex of $X$), and the singular leaves of cellular discs of the degenerate type (which have contractible image under $f$ as explained above).
            This proves the claim.
        \end{proof}        
        Therefore, at the end of the unzipping process, the new vertex discs are simply connected and are therefore topological discs.
        
        Moreover, $\Gamma$ is compact as its intersection with each vertex disc, $1$-handle, or cellular disc of $\Sigma$ is compact, and $\Sigma$ has only finitely many such pieces.
        Therefore, $\Gamma$ is a compact $1$-dimensional submanifold of $\Sigma$.
        We then say that the stripe pattern is an \emph{unzipped stripe pattern}.
        
        \subsection{The angle structure}
        
        Before constructing the angle structure on $\Sigma$, we describe the cellulation that will support it.
        
        An \emph{$a$-region} of $\Sigma$ is a connected component of $\Sigma^a$, while a \emph{$b$-region} is a connected component of $\Sigma^b$.
        A \emph{region} is either an $a$-region, a $b$-region, or a vertex disc.
        A \emph{transition arc} is a connected component of the $1$-dimensional submanifold $\Gamma$ discussed above, while a \emph{boundary arc} is an edge of $\partial\Sigma$ that bounds a stripe.
        Recall that each boundary arc has an index $i\in\left\{1,\dots,\ell\right\}$ such that the arc is labelled by the letter $a_i\in\left\{a^{\pm1}\right\}$ (or $b_i\in\left\{b^{\pm1}\right\}$) of the word $\Phi(g)$.
        Note that a transition arc might be a loop, but a boundary arc cannot because $\Phi(g)$ must be a cyclically reduced alternating word starting and ending in distinct letters since $\Phi\left(g\right)^n$ is alternating for all $n$.
        
        Observe that each $a$- or $b$-region is a subsurface of $\Sigma$, and each of its boundary components alternates between transition and boundary arcs.
        We put one vertex at each endpoint of each transition or boundary arc --- those vertices are called \emph{arc endpoints} --- and an additional vertex in the interior of each transition arc --- those are called \emph{transition vertices}.
        In the exceptional case of a transition arc that is a loop, there is one arc endpoint at the unique endpoint, and one transition vertex in the interior of the arc.
        Then we add boundary arcs and half-transition arcs as edges.
        In order to complete this to a cellulation of $\Sigma$, we need to add more edges (until the edge set cuts $\Sigma$ into discs), but in light of Corollary \ref{cor:surf-dec}, the way we do this is irrelevant as we will estimate the total curvature of $\Sigma$ by counting the interior curvature of each region, and the interior curvature does not depend on how a subsurface is subdivided into discs.
        Likewise, it suffices to define the total angle of each vertex inside each region, without specifying how the total angle is split between the faces of $\Sigma$.
        Note that, for any choice of cellulation of $\Sigma$, we can pick angles for all corners in the cellulation with specified total angles in the regions of $\Sigma$ --- indeed, such a choice amounts to solving a system of affine equations where each variable --- corresponding to an angle --- appears in exactly one equation --- corresponding to the total angle of the associated vertex in the relevant region.
        
        The angle structure is now defined as follows.
        
        Each arc endpoint $v$ is contained in two regions (on both sides of the corresponding transition arc).
        The total angle of $v$ in each of these regions is defined to be
        \begin{itemize}
            \item A right angle ($\pi/2$) if $v\in\partial\Sigma$, or
            \item A flat angle ($\pi$) if $v\not\in\partial\Sigma$.
        \end{itemize}
        
        Consider a transition vertex $v$.
        It lies in two regions, at most one of which is a vertex disc.
        Consider an $a$- or $b$-region $\Lambda$ on the boundary of which $v$ lies.
        Recall that the boundary of $\Lambda$ alternates between transition and boundary arcs; $v$ lies on a transition arc, which is preceded by a boundary arc with index $i\in\left\{1,\dots,\ell\right\}$, and succeeded by another boundary arc with index $j\in\left\{1,\dots,\ell\right\}$ --- here, the order is induced by the orientation that $\Lambda$ inherits from $\Sigma$.
        We define the total angle of $v$ in the $a$- or $b$-region $\Lambda$ to be
        \begin{equation}\label{eq:def-angle}
            \angle_{\tot}^\Lambda(v)\coloneqq\theta_{i,j}\coloneqq\begin{cases}2\pi\quad&\textrm{if $i\geq j$}\\0\quad&\textrm{if $i<j$}\end{cases}.
        \end{equation}
        In the exceptional case where $v$ is the endpoint of a transition arc which is a loop, there is no preceding or succeeding boundary arc, and we set $\angle_{\tot}^\Lambda(v)\coloneqq\pi$.
        
        Figure \ref{fig:ang-str-reg} shows an annular $a$-region and the definition of the angle structure in that region.
        
        If the other region in which $v$ lies is also an $a$- or $b$-region, then its total angle in that region is defined in the same way.
        Otherwise, $v$ lies in a vertex disc $\Delta$, and its total angle in $\Delta$ is defined to be
        \begin{equation}\label{eq:def-angle-2}
            \angle_{\tot}^\Delta(v)\coloneqq2\pi-\angle_{\tot}^\Lambda(v).
        \end{equation}
        
        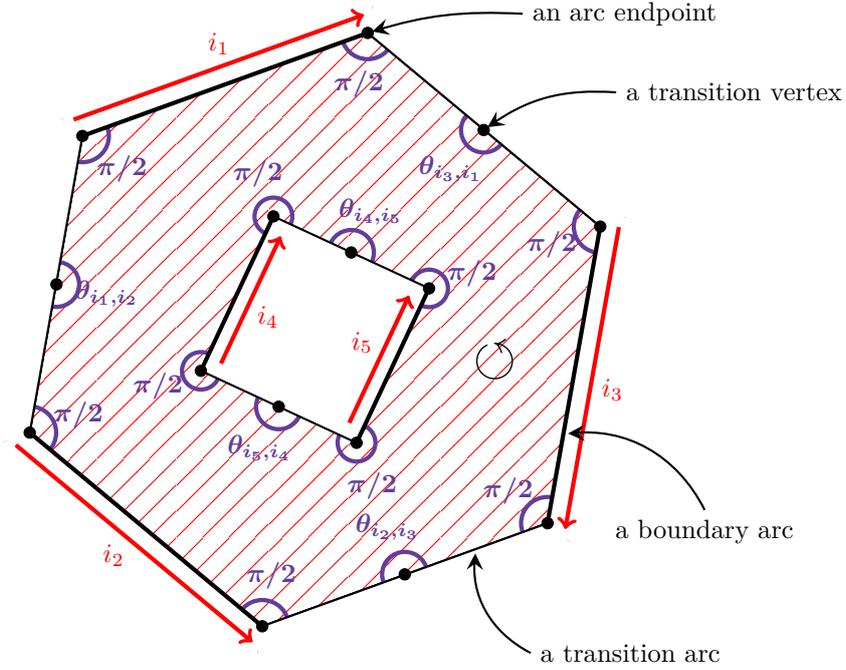
\begin{figure}[hbt]
            \centering
            \begin{tikzpicture}[every node/.style={rectangle,draw=none,fill=none},scale=4,rotate=20]
                \coordinate (o) at (0,0);
                \foreach \i in {0,...,5}{
                    \coordinate (a\i) at ({\i*360/6}:1cm);
                    \coordinate (ap\i) at ({\i*360/6}:1.2cm);
                    \coordinate (aang\i) at ({\i*360/6}:.83cm);}
                \foreach \i in {0,...,3}{
                    \coordinate (b\i) at ({\i*360/4}:.4cm);
                    \coordinate (bang\i) at ({\i*360/4}:.55cm);}
                \foreach \i [evaluate={\j=int(mod(\i+1,6))}] in {0,...,5}{
                    \coordinate (a\i\j) at (barycentric cs:a\i=1,a\j=1);
                    \coordinate (aang\i\j) at (barycentric cs:a\i\j=1,o=.24);}
                \foreach \i [evaluate={\j=int(mod(\i+1,4))}] in {0,...,3}{
                    \coordinate (b\i\j) at (barycentric cs:b\i=1,b\j=1);
                    \coordinate (bang\i\j) at (barycentric cs:b\i\j=1,o=-.35);}
                
                \draw [draw,pattern={Lines[angle=\angred,distance=7]},pattern color=red]
                    (a0) \foreach \i in {1,...,5}{-- (a\i)} -- cycle;
                \draw [draw,thick]
                    (a0) \foreach \i in {1,...,5}{-- (a\i)} -- cycle;
                    
                \foreach \i [evaluate={\j=int(mod(\i+1,6))},evaluate={\u=int(mod(\i,2))}] in {0,...,5}{
                    \draw node [draw,circle,ultra thick,fill=none,inner sep=7pt,RoyalPurple] at (a\i) {};
                    \ifnum\u=0
                        \draw node [draw,circle,ultra thick,fill=none,inner sep=6pt,RoyalPurple] at (a\i\j) {};
                    \fi}
                
                \foreach \i [evaluate={\j=int(mod(\i+1,6))}] in {0,...,5}{
                    \draw [draw=none,fill=white] (a\i) -- (a\j) -- (ap\j) -- (ap\i) -- cycle;}
                    
                \foreach \i [evaluate={\j=int(mod(\i+1,4))},evaluate={\u=int(mod(\i,2))}] in {0,...,3}{
                    \draw node [draw,circle,ultra thick,fill=none,inner sep=5pt,RoyalPurple] at (b\i) {};
                    \ifnum\u=0
                        \draw node [draw,circle,ultra thick,fill=none,inner sep=6pt,RoyalPurple] at (b\i\j) {};
                    \fi}
                    
                \draw [draw,thick,fill=none]
                    (a0) \foreach \i in {1,...,5}{-- (a\i)} -- cycle;
                \draw [draw,thick,fill=white]
                    (b0) \foreach \i in {1,...,3}{-- (b\i)} -- cycle;
                
                \foreach \i [evaluate={\j=int(mod(\i+1,6))}] in {1,3,5}{
                    \draw [ultra thick] (a\i) -- (a\j);
                    \ifnum\i=3
                        \draw [ultra thick,red,->,shorten <= 3pt, shorten >= 3pt]
                            ([xshift={2*cos(\i*360/6)},yshift={2*sin(\i*360/6)}]a\i) -- ([xshift={2*cos(\j*360/6)},yshift={2*sin(\j*360/6)}]a\j)
                            coordinate [midway,xshift={9*cos((\i+.5)*360/6)},yshift={9*sin((\i+.5)*360/6)}] (ma\i);
                    \else
                        \draw [ultra thick,red,<-,shorten <= 3pt, shorten >= 3pt]
                            ([xshift={2*cos(\i*360/6)},yshift={2*sin(\i*360/6)}]a\i) -- ([xshift={2*cos(\j*360/6)},yshift={2*sin(\j*360/6)}]a\j)
                            coordinate [midway,xshift={9*cos((\i+.5)*360/6)},yshift={9*sin((\i+.5)*360/6)}] (ma\i);
                    \fi}
                
                \foreach \i [evaluate={\j=int(mod(\i+1,4))}] in {1,3}{
                    \draw [ultra thick] (b\i) -- (b\j);
                    \ifnum\i=3
                        \draw [ultra thick,red,->]
                            ([xshift={-2*cos(\i*360/4)},yshift={-2*sin(\i*360/4)}]b\i) -- ([xshift={-2*cos(\j*360/4)},yshift={-2*sin(\j*360/4)}]b\j)
                            coordinate [midway,xshift={-9*cos((\i+.5)*360/4)},yshift={-9*sin((\i+.5)*360/4)}] (mb\i);
                    \else
                        \draw [ultra thick,red,<-]
                            ([xshift={-2*cos(\i*360/4)},yshift={-2*sin(\i*360/4)}]b\i) -- ([xshift={-2*cos(\j*360/4)},yshift={-2*sin(\j*360/4)}]b\j)
                            coordinate [midway,xshift={-9*cos((\i+.5)*360/4)},yshift={-9*sin((\i+.5)*360/4)}] (mb\i);
                    \fi}
                    
                \draw node [red] at (ma1) {${i_1}$};
                \draw node [red] at (ma3) {${i_2}$};
                \draw node [red] at (ma5) {${i_3}$};
                \draw node [red] at (mb1) {${i_4}$};
                \draw node [red] at (mb3) {${i_5}$};
                
                \foreach \i [evaluate={\j=int(mod(\i+1,6))},evaluate={\u=int(mod(\i,2))}] in {0,...,5}{
                    \draw node [draw,circle,fill,inner sep=1.5pt] at (a\i) {};
                    \draw node [RoyalPurple] at (aang\i) {$\bm{\pi/2}$};
                    \ifnum\u=0
                        \draw node [draw,circle,fill,inner sep=1.5pt] at (a\i\j) {};
                    \fi}
                    
                \foreach \i [evaluate={\j=int(mod(\i+1,4))},evaluate={\u=int(mod(\i,2))}] in {0,...,3}{
                    \draw node [draw,circle,fill,inner sep=1.5pt] at (b\i) {};
                    \draw node [RoyalPurple] at (bang\i) {$\bm{\pi/2}$};
                    \ifnum\u=0
                        \draw node [draw,circle,fill,inner sep=1.5pt] at (b\i\j) {};
                    \fi}
                    
                \draw node [RoyalPurple] at (aang01) {$\bm{\theta_{i_3,i_1}}$};
                \draw node [RoyalPurple] at (aang23) {$\bm{\theta_{i_1,i_2}}$};
                \draw node [RoyalPurple] at (aang45) {$\bm{\theta_{i_2,i_3}}$};
                \draw node [RoyalPurple] at (bang01) {$\bm{\theta_{i_4,i_5}}$};
                \draw node [RoyalPurple] at (bang23) {$\bm{\theta_{i_5,i_4}}$};
                \draw node at (-30:.6) {\Huge${\circlearrowleft}$};
                        
                \begin{scope}[decoration={markings,mark=at position 5pt with      {\arrow[scale=1.5,>=stealth,xshift=2.5\pgflinewidth]{<}}}]
                    \draw [thick,postaction=decorate,shorten <= 3pt] (a1) to [bend left=10] (1,.75)
                        node [right] {an arc endpoint};
                    \draw [thick,postaction=decorate,shorten <= 3pt] (a01) to [bend left=20] (1.2,.4)
                        node [right] {a transition vertex};
                    \draw [thick,postaction=decorate,shorten <= 3pt] (barycentric cs:a45=1,a5=1) to [bend right=35] (.3,-1.25)
                        node [right] {a transition arc};
                    \draw [thick,postaction=decorate,shorten <= 3pt] (barycentric cs:a5=.7,a0=.3) to [bend left=35] (1,-1)
                        node [below] {a boundary arc};
                \end{scope}
            \end{tikzpicture}
            \caption{The angle structure in an $a$-region. Boundary arcs are in bold, with labels indicated by parallel arrows. Angles are indicated in purple.}\label{fig:ang-str-reg}
        \end{figure}
            
        \begin{rmk}\label{rmk:lp-dual}
            There is a strong connection between our method and Chen's linear programming duality \cites{chen,chen-specgaps,chen-heuer,chen:kervaire}.
            To expand on this connection, let us compare the above construction with the overview that Chen gives of his method in \cite{chen:kervaire}*{\S{}4}.
            Chen's notion of \emph{turns} corresponds to our transition arcs; they carry a \emph{cost}, which corresponds to the angles $\theta_{i,j}$ defined above.
            Chen's \emph{pieces} correspond to our regions, and the cost of a piece corresponds to the curvature in our language.
            Lemmas 3.6 and 4.1 of \cite{chen:kervaire} are essentially based on the Gauß--Bonnet formula.
            
            This analogy can be used to translate Chen's LP duality proofs into our language --- for example, Chen's spectral gap for free products \cite{chen-specgaps} is reproved in the author's PhD thesis \cite{m:phd}*{\S{}VI.2}.
            However, the angle structure method might be more flexible and does not require one to first formulate a linear programming problem that computes $\scl$.
        \end{rmk}
        
        \subsection{Boundary orientation of \texorpdfstring{$a$}{a}- and \texorpdfstring{$b$}{b}-regions}
        
        In order to estimate the interior curvature of $a$- and $b$-regions, we need a better understanding of their boundary labelling.
        Each boundary arc of $\Sigma$ is labelled by a letter in $\left\{a^{\pm1},b^{\pm1}\right\}$.
        Another way to see this labelling is as an orientation of the boundary arc, as indicated by bold arrows in Figures \ref{fig:new-discs}, \ref{fig:zebra}, \ref{fig:cor-bnd-lab}, \ref{fig:ex:stripeptrn}, \ref{fig:unzip}, \ref{fig:ang-str-reg} --- the label of the boundary arc is $a^{+1}$ (or $b^{+1}$) if its orientation matches that of $\Sigma$, and $a^{-1}$ (or $b^{-1}$) otherwise.
        We say that two boundary arcs have \emph{opposite orientations} if one of them is labelled by $a^{+1}$ (or $b^{+1}$) while the other one is labelled by $a^{-1}$ (or $b^{-1}$) --- in other words, one is oriented consistently with $\Sigma$, and the other one isn't.
        
        \begin{lmm}\label{lmm:bnd-labels}
            Let $\Lambda$ be an $a$- or $b$-region of $\Sigma$ which is topologically a disc and whose boundary contains at least two distinct boundary arcs.
            Then $\partial\Lambda$ contains two boundary arcs with opposite orientations.
        \end{lmm}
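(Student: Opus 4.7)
The plan is to build a continuous map $\pi\colon\Lambda\to S^1_a$ and apply Stokes' theorem to the closed $1$-form $\pi^*d\theta$, so as to deduce that the signed sum of orientations over boundary arcs vanishes. I will assume $\Lambda$ is an $a$-region, the $b$-case being identical.

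I would define $\pi$ stripe by stripe. On each $a$-stripe contained in a $1$-handle $\mathcal{H}$, take $\pi$ to be the composition $\Lambda\hookrightarrow\mathcal{H}\to B_2\to S^1_a$, where the last arrow collapses $S^1_b$ to the basepoint; each of the stripe's two base sides then wraps once around $S^1_a$ in the direction dictated by the stripe's letter, while both fiber sides map to the basepoint. On every correction $a$-stripe that carries a letter of $\Phi(g_i)$ or $\Phi(g_{i+1})$ surviving in $\Phi(g_ig_{i+1})$, apply the same recipe. On every $a$-hexagon, every cellular-disc $a$-stripe, and every cancellation piece between $v$ and $v^{-1}$ introduced in the boundary correction, let $\pi$ be constantly the basepoint. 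Continuity is then immediate: each interior edge of $\Lambda$ where a winding stripe meets a constant piece is a fiber side of that stripe, already mapped to the basepoint, and two winding stripes only share an edge when their letters agree, so their windings on that edge coincide.

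Once $\pi$ is defined, Stokes' theorem yields $\int_{\partial\Lambda}\pi^*d\theta=0$. Each boundary arc $\alpha$ of $\Lambda$ is a base side of some winding stripe, so $\int_\alpha\pi^*d\theta=2\pi\,\epsilon(\alpha)$, where $\epsilon(\alpha)\in\{\pm1\}$ records the arc's orientation; transition arcs --- being fiber sides or edges of constant pieces --- contribute nothing. Hence
\[
    \sum_\alpha\epsilon(\alpha)=0,
\]
the sum running over all boundary arcs of $\Lambda$. If every boundary arc shared the same orientation $\epsilon$, the left-hand side would equal $\epsilon$ times the number of arcs, a nonzero integer by hypothesis --- a contradiction. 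So at least two arcs carry opposite orientations.

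The principal obstacle is the bookkeeping required to verify the continuity of $\pi$ across all the interior edges produced by the boundary correction and the unzipping of \S{}\ref{sec:lqm-ang}. The policy of making every non-letter-bearing piece collapse to the basepoint should reduce the verification to checking that shared edges between two winding stripes carry matching letters, which is exactly what the letter-quasimorphism condition supplies.
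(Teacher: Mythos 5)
Your approach has a fatal gap: the map $\pi$ you describe does not exist, and the identity you want from Stokes' theorem is false. In the stripe pattern, a $1$-handle $a$-stripe meets the adjacent pieces inside a cellular disc (cellular-disc stripes and hexagons) along its \emph{directed} (base) edges, not along its fibre sides; the fibre sides are exactly the transition arcs, which lie on the boundary of the region, not in its interior. So on every interior edge where a winding $1$-handle stripe is glued to one of your ``constant'' pieces, the stripe side wraps once around $S^1_a$ while your $\pi$ is constant --- the map is discontinuous precisely where it matters. This is not an accident of your choices: the paper points out that the new cellular structure admits \emph{no} transverse map to $S^1_a\vee S^1_b$, because the boundary of an $a$-hexagon maps to $x_1x_2x_3\in\left\{a^{\pm1}\right\}$, a loop of degree $\pm1$ in $S^1_a$, which cannot extend over the hexagonal disc no matter what you do on it.

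Correcting the bookkeeping does not rescue the strategy. If you cancel interior directed edges in pairs (each is shared by two coherently oriented pieces), the honest identity is not $\sum_\alpha\epsilon(\alpha)=0$ but $\sum_\alpha\epsilon(\alpha)=\sum_H\delta(H)$, where each hexagon $H$ contributes a defect $\delta(H)=\pm1$ (exactly two of $x_1,x_2,x_3$ share a sign). For instance, a disc region consisting of one hexagon with three stripes attached has three boundary arcs whose signs sum to $\pm1$. Worse, even this corrected count is too weak for the lemma: a region with two hexagons of equal defect is, as far as the count goes, compatible with having exactly two boundary arcs of the \emph{same} orientation. The paper's proof instead uses the local observation that every stripe \emph{and} every hexagon has two directed edges with opposite orientations, and propagates this from one boundary arc through successively glued pieces (a path $\eta$ crossing directed edges with a consistent transverse orientation) until a second boundary arc of opposite orientation is reached; that connectivity argument uses strictly more than the global signed count, and some such finer input is unavoidable.
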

        \begin{proof}
            The region $\Lambda$ is built out of stripes and hexagons.
            Each stripe has four boundary edges, alternating between sections of transition arcs, and edges that have an orientation given by whether they map positively to $S^1_a$ or $S^1_b$.
            Those edges are called \emph{directed edges}.
            The orientations of directed edges are indicated by bold arrows in Figures \ref{fig:new-discs}, \ref{fig:zebra}, \ref{fig:cor-bnd-lab}, \ref{fig:ex:stripeptrn}, \ref{fig:unzip}, \ref{fig:ang-str-reg}.
            Likewise, each hexagon has six boundary edges, alternating between sections of transitions arcs, and directed edges.
            Stripes and hexagons are glued along directed edges, in such a way that orientations of directed edges which are glued together agree.
            
            Now the key observation is that each stripe and each hexagon contains two directed edges with opposite orientations (i.e. one of them matches the orientation of the stripe or hexagon, while the other does not).
            In stripes (Figure \ref{fig:opp-or-str}), this is because parallel directed edges have parallel orientations, so one of them matches the orientation of the stripe, and the other does not.
            In hexagons (Figure \ref{fig:opp-or-hex}), one must come back to the construction of \S{}\ref{subsec:ext-cell} to see that the labels of the boundary edges correspond to letters $x_1,x_2,x_3$ as in the definition of letter-quasimorphisms (Definition \ref{def:let-qm}), and so in particular $x_1x_2x_3\in\left\{a^{\pm1},b^{\pm1}\right\}$.
            This means that each hexagon has two directed edges with opposite orientations.
        
            \begin{figure}[htb]
                \centering
                \begin{subfigure}[c]{13em}
                    \centering
                    \begin{tikzpicture}[every node/.style={rectangle,draw=none,fill=none},rotate=-70,scale=1]
                        \coordinate (a0) at (0,0);
                        \coordinate (a1) at (2.25,0);
                        \coordinate (b0) at (0,4);
                        \coordinate (b1) at (2.25,4);
                        \draw [pattern={Lines[angle=\angred,distance=\spr]},pattern color=red] (a0) -- (a1) -- (b1) -- (b0) -- cycle;
                        \draw [thick] (a0) -- (a1) -- (b1) -- (b0) -- cycle;
                        \draw [ultra thick] (a0) -- (a1) (b1) -- (b0);
                        \draw [ultra thick,red,shorten <=2pt,shorten >=2pt,->]
                            ([yshift=5pt]a0) -- ([yshift=5pt]a1);
                        \draw [ultra thick,red,shorten <=2pt,shorten >=2pt,->]
                            ([yshift=-5pt]b0) -- ([yshift=-5pt]b1);
                        \node at (barycentric cs:a0=1,a1=1,b0=1,b1=1) {\Huge$\circlearrowleft$};
                    \end{tikzpicture}
                    \caption{Stripe.}\label{fig:opp-or-str}
                \end{subfigure}
                \hspace{3em}
                \begin{subfigure}[c]{13em}
                    \centering
                    \begin{tikzpicture}[every node/.style={rectangle,draw=none,fill=none},rotate=25,scale=1.8]
                        \foreach \i in {0,...,5}{
                            \coordinate (a\i) at ({\i*360/6}:1cm);
                            \coordinate (ap\i) at ({\i*360/6}:.9cm);}
                        \draw [pattern={Lines[angle=\angred,distance=\spr]},pattern color=red] (a0) \foreach \i in {1,...,5}{-- (a\i)} -- cycle;
                        \draw [thick] (a0) \foreach \i in {1,...,5}{-- (a\i)} -- cycle;
                        \foreach \i [evaluate={\j={\i+1}}] in {0,2}{
                            \draw [ultra thick] (a\i) -- (a\j);}
                        \draw [ultra thick,red,->] (ap0) -- (ap1);
                        \draw [ultra thick,red,<-] (ap2) -- (ap3);
                        \draw [ultra thick,red,->] (ap4) -- (ap5);
                        \node at (barycentric cs:a0=1,a2=1,a4=1) {\Huge$\circlearrowleft$};
                    \end{tikzpicture}
                    \caption{Hexagon.}\label{fig:opp-or-hex}
                \end{subfigure}
                \caption{Directed edges with opposite orientations (highlighted in bold) in stripes and hexagons.}\label{fig:opp-or-str-hex}
            \end{figure}
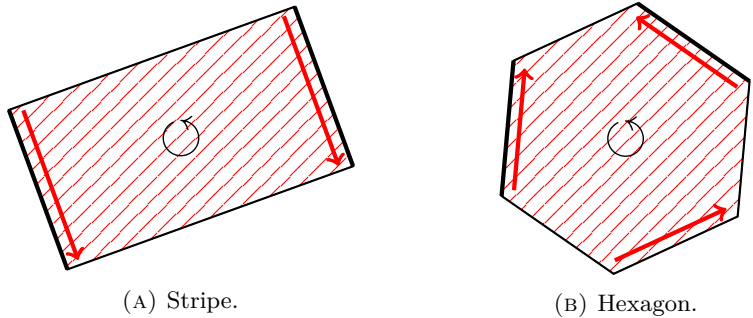
            
            Using this observation, and the fact that stripes and hexagons are glued so that directed edges have matching orientations, we can construct two boundary arcs of $\Lambda$ with opposite orientations as follows.
            Start at a boundary arc of $\Lambda$; this arc is a directed edge $e_1$ of a stripe or a hexagon, which we denote by $\Delta_1$.
            We have observed that $\partial\Delta_1$ has a directed edge $e_2$ with orientation opposite to $e_1$; pick a point $p_1$ in the interior of $e_1$ and a point $p_2$ in the interior of $e_2$ and connect them via an arc in the interior of $\Delta_1$.
            Now $e_2$ borders another stripe or hexagon, which we denote by $\Delta_2$; extend the previous arc to a point $p_3$ in the interior of a directed edge $e_3$ of $\Delta_3$ with orientation opposite to $e_2$.
            Iterate this process; it will follow from the following claim that we cannot visit the same stripe or hexagon twice:
            \begin{claim*}
                There is no sequence of at least two successively adjacent stripes and hexagons in $\Lambda$ (without backtracking) starting and finishing on the same stripe or hexagon.
            \end{claim*}
            \begin{proof}[Proof of the claim]
                Suppose for contradiction that there is a sequence of stripes and hexagons as above in $\Lambda$.
                Draw a loop $\beta$ in $\Lambda$ connecting the midpoints of the successive connecting edges of this sequence.
                We will show that $\beta$ is non-contractible (in $\Lambda$), contradicting the assumption that $\Lambda$ is topologically a disc.
                Suppose otherwise: hence, there is a homotopy $\beta_\bullet:[0,1]^2\rightarrow\Lambda$ from $\beta=\beta_0$ to a constant loop $\beta_1$.
                Note that the restriction of $\beta$ to each stripe or hexagon enters and exits using two opposite edges; since those are separated by edges contained in $\partial\Lambda$, this must also be true of each $\beta_t$ (for $t\in[0,1]$).
                Therefore, the number of stripes and hexagons visited by $\beta_t$ does not depend on $t$.
                But this number is at least $2$ for $\beta_0$, while it is exactly $1$ for $\beta_1$; this is a contradiction.
            \end{proof}
            Moreover, $\Lambda$ is compact, so the above process eventually reaches a boundary arc of $\Lambda$.
            The construction is illustrated in Figure \ref{fig:constr-opp-or}.
            \begin{figure}[htb]
                \centering
                \def\r{.8}
                \begin{tikzpicture}[every node/.style={rectangle,draw=none,fill=none},scale=1.5,rotate=20]
                    \foreach \i in {0,...,5}{
                        \coordinate (b\i) at ({2.5*cos(120)+\r*cos(360*\i/6+30)},{2.5*sin(120)+\r*sin(360*\i/6+30)});
                        \coordinate (bp\i) at ({2.5*cos(120)+.9*\r*cos(360*\i/6+30)},{2.5*sin(120)+.9*\r*sin(360*\i/6+30)});
                        \coordinate (c\i) at ({\r*cos(360*\i/6+30)},{\r*sin(360*\i/6+30)});
                        \coordinate (cp\i) at ({.9*\r*cos(360*\i/6+30)},{.9*\r*sin(360*\i/6+30)});
                        \coordinate (d\i) at ({3+\r*cos(360*\i/6+30)},{\r*sin(360*\i/6+30)});
                        \coordinate (dp\i) at ({3+.9*\r*cos(360*\i/6+30)},{.9*\r*sin(360*\i/6+30)});}
                        
                    \coordinate (a0) at ([xshift=-3em]b2);
                    \coordinate (a1) at ([xshift=-3em]b3);
                    \coordinate (ap0) at ([yshift=1em]a0);
                    \coordinate (ap1) at ([yshift=-1em]a1);
                    
                    \coordinate (e0) at ([xshift=1.5em,yshift=2.598em]d0);
                    \coordinate (e1) at ([xshift=1.5em,yshift=2.598em]d1);
                    \coordinate (ep0) at ([xshift=1.299em,yshift=-.75em]e0);
                    \coordinate (ep1) at ([xshift=-1.299em,yshift=.75em]e1);
                    
                    \draw [pattern={Lines[angle=\anggrn,distance=\spr]},pattern color=PineGreen] (a0) -- (a1) -- (b3) -- (b2) -- cycle;
                    \draw [thick] (a0) -- (a1) -- (b3) -- (b2) -- cycle;
                    \draw [pattern={Lines[angle=\anggrn,distance=\spr]},pattern color=PineGreen] (b0) \foreach \i in {1,...,5}{-- (b\i)} -- cycle;
                    \draw [thick] (b0) \foreach \i in {1,...,5}{-- (b\i)} -- cycle;
                    \draw [pattern={Lines[angle=\anggrn,distance=\spr]},pattern color=PineGreen] (b4) -- (c2) -- (c1) -- (b5) -- cycle;
                    \draw [thick] (b4) -- (c2) -- (c1) -- (b5) -- cycle;
                    \draw [pattern={Lines[angle=\anggrn,distance=\spr]},pattern color=PineGreen] (c0) \foreach \i in {1,...,5}{-- (c\i)} -- cycle;
                    \draw [thick] (c0) \foreach \i in {1,...,5}{-- (c\i)} -- cycle;
                    \draw [pattern={Lines[angle=\anggrn,distance=\spr]},pattern color=PineGreen] (c0) -- (c5) -- (d3) -- (d2) -- cycle;
                    \draw [thick] (c0) -- (c5) -- (d3) -- (d2) -- cycle;
                    \draw [pattern={Lines[angle=\anggrn,distance=\spr]},pattern color=PineGreen] (d0) \foreach \i in {1,...,5}{-- (d\i)} -- cycle;
                    \draw [thick] (d0) \foreach \i in {1,...,5}{-- (d\i)} -- cycle;
                    \draw [pattern={Lines[angle=\anggrn,distance=\spr]},pattern color=PineGreen] (e0) -- (e1) -- (d1) -- (d0) -- cycle;
                    \draw [thick] (e0) -- (e1) -- (d1) -- (d0) -- cycle;
                    
                    \draw [ultra thick] (ap0) -- (ap1) node [below] {$\partial\Sigma$};
                    \draw [ultra thick] (ep0) -- (ep1) node [left] {$\partial\Sigma$};
                    
                    \draw [ultra thick,PineGreen,->] (bp2) -- (bp3);                    
                    \draw [ultra thick,PineGreen,->] (bp5) -- (bp4);
                    \draw [ultra thick,PineGreen,->] (cp1) -- (cp2);
                    \draw [ultra thick,PineGreen,->] (cp0) -- (cp5);
                    \draw [ultra thick,PineGreen,->] (dp2) -- (dp3);
                    \draw [ultra thick,PineGreen,->] (dp1) -- (dp0);
                    
                    \begin{scope}[ultra thick,PineGreen,shorten <=1.75pt,shorten >=1.75pt,->]
                        \draw ([xshift=2pt]a0) -- ([xshift=2pt]a1);
                        \draw ([xshift=-2pt]b2) -- ([xshift=-2pt]b3);
                        \draw ([xshift=-2*.5pt,yshift=2*.866025404pt]c1) -- ([xshift=-2*.5pt,yshift=2*.866025404pt]c2);
                        \draw ([xshift=2*.5pt,yshift=-2*.866025404pt]b5) -- ([xshift=2*.5pt,yshift=-2*.866025404pt]b4);
                        \draw ([xshift=2pt]c0) -- ([xshift=2pt]c5);
                        \draw ([xshift=-2pt]d2) -- ([xshift=-2pt]d3);
                        \draw ([xshift=2*.5pt,yshift=2*.866025404pt]d1) -- ([xshift=2*.5pt,yshift=2*.866025404pt]d0);
                        \draw ([xshift=-2*.5pt,yshift=-2*.866025404pt]e1) -- ([xshift=-2*.5pt,yshift=-2*.866025404pt]e0);
                    \end{scope}
                    
                    \draw node[circle,draw,fill=black,inner sep=1.5pt,MidnightBlue] (p1) at (barycentric cs:a0=1,a1=1) {};
                    \draw node[circle,draw,fill=black,inner sep=1.5pt,MidnightBlue] (p2) at (barycentric cs:b2=1,b3=1) {};
                    \draw node[circle,draw,fill=black,inner sep=1.5pt,MidnightBlue] (p3) at (barycentric cs:b5=1,b4=1) {};
                    \draw node[circle,draw,fill=black,inner sep=1.5pt,MidnightBlue] (p4) at (barycentric cs:c1=1,c2=1) {};
                    \draw node[circle,draw,fill=black,inner sep=1.5pt,MidnightBlue] (p5) at (barycentric cs:c0=1,c5=1) {};
                    \draw node[circle,draw,fill=black,inner sep=1.5pt,MidnightBlue] (p6) at (barycentric cs:d2=1,d3=1) {};
                    \draw node[circle,draw,fill=black,inner sep=1.5pt,MidnightBlue] (p7) at (barycentric cs:d1=1,d0=1) {};
                    \draw node[circle,draw,fill=black,inner sep=1.5pt,MidnightBlue] (p8) at (barycentric cs:e1=1,e0=1) {};
                    
                    \draw [ultra thick,MidnightBlue] (p1) -- (p2) -- (p3) -- (p4) -- (p5) -- (p6) -- (p7) -- (p8);
                    
                    \draw [dashed,thick,MidnightBlue,<-,shorten <= 2pt] (p1) to [bend left=30] (-2.5,1.6) node [below] {$p_1$};
                    \draw [dashed,thick,MidnightBlue,<-,shorten <= 2pt] (p2) to [bend right=40] (-2.3,2.9) node [left] {$p_2$};
                    \draw [dashed,thick,MidnightBlue,<-,shorten <= 2pt] (p3) to [bend left=20] (0,1.7) node [right] {$p_3$};
                    \draw [dashed,thick,MidnightBlue,<-,shorten <= 2pt] (barycentric cs:p3=1,p4=1) to [bend left=30] (-1.5,1) node [left] {$\eta$};
                    \draw node [circle,fill=white,draw=none,inner sep=.5pt] at (-.15,-.15) {\color{PineGreen}{$\Lambda$}};
                    \draw node at (3.15,-.15) {\Huge${\circlearrowleft}$};
                \end{tikzpicture}
                \caption{Construction of a path connecting two boundary arcs with opposite orientations in an $a$- or $b$-region.}\label{fig:constr-opp-or}
            \end{figure}
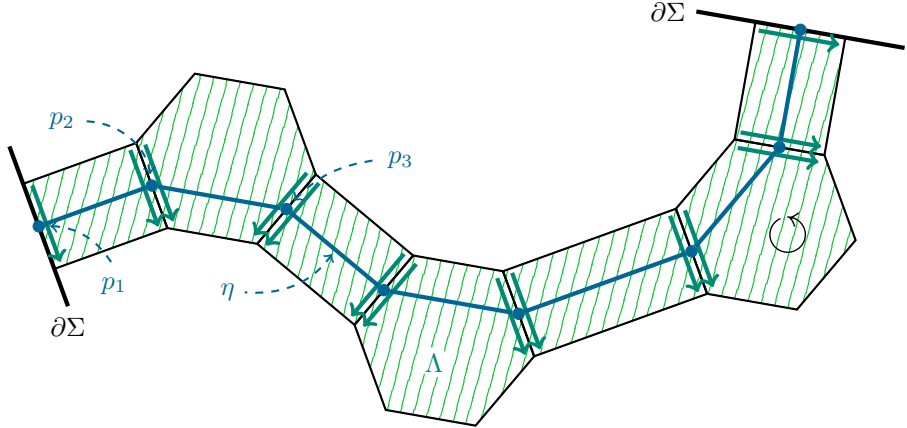
            Hence, we obtain an arc $\eta$ visiting an injective sequence of stripes and hexagons, with endpoints on distinct boundary arcs of $\Lambda$, with the property that all directed edges of stripes or hexagons crossed by $\eta$ have the same transverse orientation with respect to $\eta$.
            In particular, the boundary arcs at the extremities of $\eta$ must have opposite orientations (with respect to the orientation of $\Lambda$).
        \end{proof}
        
        \subsection{Estimating the curvature}
    
        We will estimate the total curvature of $\Sigma$ using its decomposition into $a$-regions, $b$-regions, and vertex discs.
        Corollary \ref{cor:surf-dec} says that it suffices to estimate the curvature of each vertex and the interior curvature of each region.
        
        \begin{claim}
            Each arc endpoint of $\Sigma$ has zero curvature.
        \end{claim}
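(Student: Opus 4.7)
The plan is to verify the claim by direct case analysis, substituting the definition of the total angle at an arc endpoint into the curvature formula
\[
    \kappa(v) = 2\pi - \pi\cdot\chi\left(\lk_\Sigma(v)\right) - \sum_{c\in\mathcal{C}(v)}\angle c,
\]
after observing that $\sum_{c\in\mathcal{C}(v)}\angle c$ equals the sum of the regional total angles $\angle_{\tot}^\Lambda(v)$ over the regions $\Lambda$ meeting $v$ (since each corner at $v$ is distributed among the faces of a single such region).

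I would then split into the three subcases appearing in the definition of the total angle at an arc endpoint, using in each case the standard fact that $\chi\left(\lk_\Sigma(v)\right)=1$ when $v\in\partial\Sigma$ (the link is a half-circle) and $\chi\left(\lk_\Sigma(v)\right)=0$ when $v$ lies in the interior of $\Sigma$ (the link is a full circle). First, when $v$ is shared by two regions and lies on $\partial\Sigma$, each regional total angle is $\pi/2$, and these combine to give $\kappa(v)=2\pi-\pi-\pi=0$. Second, when $v$ is shared by two regions but lies in the interior of $\Sigma$, each regional total angle is $\pi$, giving $\kappa(v)=2\pi-0-2\pi=0$. Third, when $v$ is contained in a single region with total angle $\pi$, I would note that this situation arises only when the adjacent boundary arc closes up into a loop, which forces $v\in\partial\Sigma$, and hence $\kappa(v)=2\pi-\pi-\pi=0$.

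There is no real obstacle here; the angle values at arc endpoints were engineered precisely so that every Euler contribution at such a vertex cancels. The only point deserving a moment of care is the last case, where one must justify that the single-region configuration forces $v\in\partial\Sigma$ before one can read off $\chi\left(\lk_\Sigma(v)\right)=1$ and conclude.
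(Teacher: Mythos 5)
Your proposal is correct and follows essentially the same route as the paper: a direct case analysis on the three configurations in the definition of the angle at an arc endpoint, plugging the total angles and $\chi\left(\lk_\Sigma(v)\right)\in\{0,1\}$ into the vertex-curvature formula (the paper organises the cases by arc type rather than by region count, but the computations are identical). The point you flag in the single-region case — that it only occurs for a boundary arc which is a loop, forcing $v\in\partial\Sigma$ — is exactly the observation the paper makes there as well.
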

        \begin{proof}[Proof of the claim]
            Let $v$ be an arc endpoint.
            \begin{itemize}
                \item Suppose that $v$ lies at the endpoint of a transition arc and a boundary arc.
                In particular, $v\in\partial\Sigma$, and $v$ is contained in two regions. The angle of $v$ in each region is $\pi/2$, and $\chi\left(\lk_\Sigma(v)\right)=1$, so that $\kappa(v)=0$.
                \item If $v$ is only the endpoint of a transition arc (which has to be a loop again), then $v\not\in\partial\Sigma$, $v$ is contained in two regions, and its angle is $\pi$ in each.
                Since $\chi\left(\lk_\Sigma(v)\right)=0$ in this case, we get $\kappa(v)=0$ again.\qedhere
            \end{itemize}
        \end{proof}
        
        \begin{claim}
            Each $a$- or $b$-region of $\Sigma$ has non-positive interior curvature.
        \end{claim}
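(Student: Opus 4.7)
The plan is to expand $\kappa_{\intr}(\Lambda)$ via its definition and organize the vertex angle defect one boundary component of $\Lambda$ at a time. Fix a boundary component $C$ of $\Lambda$. In the main case, $C$ alternates between $k_C\geq 1$ boundary arcs (whose indices, read in the cyclic order induced by the orientation of $\Lambda$, form a sequence $i_1,\ldots,i_{k_C}$) and $k_C$ transition arcs, so it carries $2k_C$ arc endpoints and $k_C$ transition vertices. Substituting the values of the angle structure, each arc endpoint contributes $\pi/2-\pi=-\pi/2$ and each transition vertex contributes $\theta_{i_s,i_{s+1}}-\pi$, which equals $+\pi$ if $i_s\geq i_{s+1}$ and $-\pi$ otherwise. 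Writing $d_C$ for the number of cyclic descents in $(i_1,\ldots,i_{k_C})$, the total contribution of $C$ is
\[
    -k_C\pi+(2d_C-k_C)\pi=2\pi(d_C-k_C).
\]
The degenerate components (a single transition-arc loop, or a boundary-arc loop forming an entire component) contribute $0$, as a direct calculation from the exceptional angle assignments shows.

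Summing over all boundary components yields
\[
    \kappa_{\intr}(\Lambda)=2\pi\Big(\chi(\Lambda)+\sum_C(d_C-k_C)\Big).
\]
Since $d_C\leq k_C$ always and $\chi(\Lambda)\leq 1$ (because $\Lambda$ has non-empty boundary), this expression is already $\leq 0$ whenever $\chi(\Lambda)\leq 0$, which handles every $\Lambda$ that is not a disc.

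The main obstacle is the disc case $\chi(\Lambda)=1$, where one must strengthen the inequality to $d_C\leq k_C-1$ on the unique boundary component $C$; equivalently, the cyclic sequence $(i_1,\ldots,i_{k_C})$ must not be constant. This is precisely what Lemma \ref{lmm:bnd-labels} delivers: provided $\partial\Lambda$ carries at least two boundary arcs, it produces two such arcs with opposite orientations, and since two boundary arcs sharing the same index correspond to the same letter of $\Phi(g)$ and hence to the same orientation relative to $\Sigma$, such a pair must carry distinct indices. This forces a strict cyclic ascent somewhere along $C$ and closes the argument. The remaining small cases---a disc $a$- or $b$-region bounded by a lone transition-arc loop, or carrying only one boundary arc---will have to be ruled out by a separate inspection of the stripe-and-hexagon structure, using for instance that any $1$-handle stripe inside a disc region is forced by the extension construction to have both of its long edges on $\partial\Sigma$.
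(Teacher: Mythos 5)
Your main line of argument is essentially the paper's: you group the angle defects along transition arcs (equivalently, along the boundary components of $\Lambda$), dispose of the case $\chi(\Lambda)\leq0$ by non-positivity of that sum, reduce the disc case to the non-constancy of the cyclic index sequence, and get that non-constancy from Lemma \ref{lmm:bnd-labels} via the observation that equal indices give equal letters and hence equal orientations. Your exact identity $\kappa_{\intr}(\Lambda)=2\pi\bigl(\chi(\Lambda)+\sum_C(d_C-k_C)\bigr)$ is just a slightly sharper bookkeeping of the same computation as the paper's inequality.

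Where you stop short is in the residual disc cases you defer (a disc region bounded by a lone transition-arc loop, or carrying a single boundary arc). To be fair, the paper treats these only implicitly as well, since Lemma \ref{lmm:bnd-labels} is stated under the hypothesis of at least two distinct boundary arcs. But the fact you propose to use there is false: a $1$-handle stripe in a disc region need not have both long edges on $\partial\Sigma$ (glue two $1$-handle stripes onto the two directed edges of a single cellular-disc stripe: this is a disc region in which each $1$-handle stripe has exactly one long edge on $\partial\Sigma$). The correct way to eliminate both cases is the mechanism already inside the proof of Lemma \ref{lmm:bnd-labels}: every directed edge of a stripe or hexagon is either a boundary arc or glued to a directed edge of another piece of the same region, so a region disjoint from $\partial\Sigma$ has all its pieces glued along all of their (at least two) directed edges, whence $\chi\leq0$ and the transition-loop disc cannot occur; and running the transverse path of that proof starting from the unique boundary arc of a one-arc region produces a second boundary arc of opposite orientation, so $k_C=1$ cannot occur either. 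With that substitution in place of your suggested inspection, your proof is complete and coincides with the paper's.
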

        \begin{proof}[Proof of the claim]
            Let $\Lambda$ be an $a$- or $b$-region of $\Sigma$.
            Recall from \S{}\ref{sec:ang-str} that the interior curvature of $\Lambda$ is
            \[
                \kappa_{\intr}(\Lambda)=2\pi\cdot\chi(\Lambda)+\sum_{v\in V(\Lambda)}\left(\angle_{\tot}^\Lambda(v)-\pi\right).
            \]
            We first show that the sum $S\coloneqq\sum_{v}\left(\angle_{\tot}^\Lambda(v)-\pi\right)$ is non-positive.
            To do so, observe that each vertex of $\Lambda$ lies on a unique transition arc.
            Therefore, for the purpose of computing the sum $S$, we can partition $V(\Lambda)$ into subsets corresponding to transition arcs.
            A transition arc which is a loop contains two vertices of $\Lambda$, each of which has total angle $\pi$ in $\Lambda$, so that its contribution to $S$ is zero.
            Each transition arc which is not a loop contains three vertices of $\Lambda$: two arc endpoints with total angle $\pi/2$ in $\Lambda$ each, and one transition vertex with total angle $0$ or $2\pi$ in $\Lambda$ --- see Figure \ref{fig:ang-str-reg}.
            Hence, the total contribution of a transition arc to $S$ is at most $\left(\frac{\pi}{2}-\pi\right)+\left(\frac{\pi}{2}-\pi\right)+\left(2\pi-\pi\right)=0$.
            It follows that $S\leq0$.
            This proves that $\kappa_{\intr}(\Lambda)\leq0$ as soon as $\chi(\Lambda)\leq0$.
            
            It remains to analyse the case where $\chi(\Lambda)>0$.
            Since $\Lambda$ has nonempty boundary, the assumption that $\chi(\Lambda)>0$ implies that $\Lambda$ is a disc, and $\chi(\Lambda)=1$.
            Looking back at the previous argument, the total contribution to the sum $S$ of a transition arc preceded by a boundary arc with index $i\in\left\{1,\dots,\ell\right\}$ and succeeded by a boundary arc with index $j\in\left\{1,\dots,\ell\right\}$ is exactly $\left(\theta_{i,j}-2\pi\right)$.
            Now let $i_1,\dots,i_k$ be the successive indices of the boundary arcs of $\Lambda$, ordered according to the orientation of $\Lambda$.
            Then we have
            \[
                \kappa_{\intr}(\Lambda)=2\pi+\sum_{j=1}^k\left(\theta_{i_j,i_{j+1}}-2\pi\right),
            \]
            with the convention that $i_{k+1}=i_1$.
            If the indices $i_1,\dots,i_k$ are not all equal, then there is some $j_0$ such that $i_{j_0}<i_{j_0+1}$, and so $\left(\theta_{i_{j_0},i_{j_0+1}}-2\pi\right)=-2\pi$.
            Since $\theta_{i_j,i_{j+1}}\leq2\pi$ for all $j$ by definition, it then follows that
            \[
                \kappa_{\intr}(\Lambda)\leq2\pi+\left(\theta_{i_{j_0},i_{j_0+1}}-2\pi\right)=0.
            \]
            
            Therefore, the only case where $\kappa_{\intr}(\Lambda)$ can be positive is if $\Lambda$ is topologically a disc and all its boundary arcs have the same index $i$.
            In particular, all the boundary arcs are labelled by the same letter (either $a$, $a^{-1}$, $b$, or $b^{-1}$), which contradicts Lemma \ref{lmm:bnd-labels}.
        \end{proof}
        
        \begin{claim}
            The combined contribution of vertex discs and transition vertices to the total curvature of $\Sigma$ is at most $-2\pi\cdot n(\Sigma)$.
        \end{claim}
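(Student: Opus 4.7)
The plan is to compute
\[
    T := \sum_{\Delta}\kappa_{\intr}(\Delta) + \sum_{v\text{ transition vertex}}\kappa(v)
\]
by first simplifying the transition-vertex contributions, then expanding $\kappa_{\intr}$ for each vertex disc, and finally localising the resulting expression to the boundary components of $\Sigma$ to exploit the periodicity of the boundary labelling. The first step is to dispose of the transition vertices lying on type-$aD$ or type-$bD$ transition arcs. Each such $v$ is in the interior of $\Sigma$, so $\chi(\lk_{\Sigma}(v))=0$, and~\eqref{eq:def-angle-2} gives $\angle_{\tot}^{\Delta}(v)+\angle_{\tot}^{\Lambda}(v)=2\pi$, whence $\kappa(v)=0$. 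Hence only type-$ab$ transition vertices (separating two $a/b$-regions) can contribute nontrivially, and for such $v$ we have $\kappa(v)=2\pi-\theta^{\Lambda_1}(v)-\theta^{\Lambda_2}(v)$ with each $\theta^{\Lambda_i}(v)\in\{0,2\pi\}$ by~\eqref{eq:def-angle}.

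Next, I would expand each $\kappa_{\intr}(\Delta)$. Using $\chi(\Delta)=1$, the angle $\pi/2$ at each arc endpoint of $\partial\Delta$ (all of which lie on $\partial\Sigma$, as $\Gamma$ is a $1$-manifold after unzipping), and~\eqref{eq:def-angle-2} at each transition vertex of $\partial\Delta$, I obtain
\[
    \kappa_{\intr}(\Delta) = 2\pi-\frac{\pi}{2}\,p_{\Delta}+\sum_{v\text{ tv on }\partial\Delta}\bigl(\pi-\theta^{\Lambda(v)}(v)\bigr),
\]
where $p_{\Delta}$ counts arc endpoints on $\partial\Delta$ and $\Lambda(v)$ denotes the $a/b$-region adjacent to $\Delta$ across $v$. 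Summing over $\Delta$ and combining with $\sum_{v}\kappa(v)$, the quantity $T$ becomes a sum of local terms indexed by the arc endpoints of $\partial\Sigma$. The heart of the argument is then a local estimate on each boundary component $\partial_{j}$ of $\Sigma$, which is labelled by $\Phi(g)^{k_{j}}$ with $\sum_{j}k_{j}=n(\Sigma)$. Along $\partial_{j}$ the successive boundary-arc indices cycle through $(1,1,2,2,\ldots,\ell,\ell)$ exactly $k_{j}$ times, and the goal is to show that the portion of $T$ attached to $\partial_{j}$ is bounded above by $-2\pi k_{j}$: each of the $k_{j}$ period boundaries --- where the index resets from $\ell$ back to $1$ --- forces a curvature defect of $-2\pi$, concentrated either at a type-$ab$ transition vertex whose indices match in both adjacent regions, or shared between such a vertex and an adjacent vertex disc's interior curvature. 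Summing over $j$ then yields $T\leq -2\pi\,n(\Sigma)$.

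The hard part is that $\kappa(v)$ at a type-$ab$ transition vertex can equal $+2\pi$ when neither adjacent region sees matching indices (both $\theta^{\Lambda_{i}}(v)=0$). These positive contributions must be absorbed by negative pieces of $\kappa_{\intr}(\Delta)$ at neighbouring vertex discs, and the balancing depends on the global periodicity of the boundary indices together with Lemma~\ref{lmm:bnd-labels}, which constrains which combinations of matching and non-matching indices can simultaneously arise along the boundaries of the regions of $\Sigma$ and so guarantees that enough negative curvature is available to compensate.
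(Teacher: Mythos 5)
Your setup is on the right track (transition vertices adjacent to a vertex disc have $\kappa(v)=0$ by $\left(\ref{eq:def-angle-2}\right)$, and your expansion of $\kappa_{\intr}(\Delta)$ is consistent with the paper's), but the proposal has a genuine gap exactly at the point you yourself flag as ``the hard part''. The missing idea is a purely local one: the two angles at a type-$ab$ transition vertex are not independent. If the transition arc $\alpha$ separates an $a$-region $\Lambda_a$ from a $b$-region $\Lambda_b$, and the boundary arcs preceding and succeeding $\alpha$ in $\partial\Lambda_a$ have indices $i$ and $j$, then --- because the indices along each component of $\partial\Sigma$ read $1,1,2,2,\dots,\ell,\ell$, alternating between $a$- and $b$-arcs --- the boundary arcs preceding and succeeding $\alpha$ in $\partial\Lambda_b$ necessarily have indices $j-1$ and $i$ (mod $\ell$). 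Hence $\kappa(v)=2\pi-\theta_{i,j}-\theta_{j-1,i}$, and the case you worry about (both angles equal to $0$) cannot occur: $\theta_{i,j}=0$ and $\theta_{j-1,i}=0$ would force $i<j$ and $j-1<i$ simultaneously, which is impossible for integers. So $\kappa(v)\in\{0,-2\pi\}$, with $-2\pi$ exactly when $j=1$, and there are no positive contributions to absorb. Your proposed global balancing mechanism --- absorbing putative $+2\pi$ defects into neighbouring vertex discs using Lemma \ref{lmm:bnd-labels} --- is both unproved (you only assert that enough negative curvature ``is available'') and not the right tool: that lemma concerns boundary orientations of $a$- and $b$-regions and is what rules out positively curved $a$- and $b$-regions in the previous claim; it says nothing about vertex discs or transition vertices.

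The same index correlation is also what you need, and do not supply, for the vertex discs themselves. Writing the regions adjacent to a vertex disc $\Delta$ in cyclic order as $\Lambda_a^{(1)},\Lambda_b^{(1)},\dots,\Lambda_a^{(r)},\Lambda_b^{(r)}$ with indices $(i_k,j_k)$ in $\Lambda_a^{(k)}$ and hence $(j_k-1,i_{k+1})$ in $\Lambda_b^{(k)}$, one gets $\kappa_{\intr}(\Delta)=2\pi-\sum_{k=1}^r\left(\theta_{i_k,j_k}+\theta_{j_k-1,i_{k+1}}\right)$; positivity would force the cyclic chain $i_1<j_1\leq i_2<j_2\leq\cdots\leq i_r<j_r\leq i_1$, a contradiction (this uses $r\geq1$, i.e.\ disc-freeness), and each $k$ with $j_k=1$ contributes an extra $-2\pi$. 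With these two local computations in hand, your final counting step is correct in spirit: each of the $n(\Sigma)$ $a$-boundary arcs of index $1$ produces a $-2\pi$ defect, either at a transition vertex or inside a vertex disc, and everything else is non-positive. But as written, the proposal replaces the decisive local observation by an unexecuted global compensation argument, so it does not constitute a proof.
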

        \begin{proof}[Proof of the claim]
            Let us start with transition vertices.
            Each transition vertex $v$ lies on a transition arc $\alpha$, bounding two distinct regions of $\Sigma$.
            If $\alpha$ bounds a vertex disc $\Delta$ and an $a$- or $b$-region $\Lambda$, then by $\left(\ref{eq:def-angle-2}\right)$, we have
            \[
                \kappa(v)=2\pi-\angle_{\tot}^\Delta(v)-\angle_{\tot}^\Lambda(v)=0.
            \]
            Otherwise, $\alpha$ bounds an $a$-region $\Lambda_a$ and a $b$-region $\Lambda_b$.
            Let $i$ and $j$ be the indices of the boundary arcs preceding and succeeding $\alpha$ in $\partial\Lambda_a$.
            Hence, the indices of the boundary arcs preceding and succeeding $\alpha$ in $\partial\Lambda_b$ must be $j-1$ and $i$ (with indices taken modulo $\ell$), as in Figure \ref{fig:trans-vrtx}.
            
            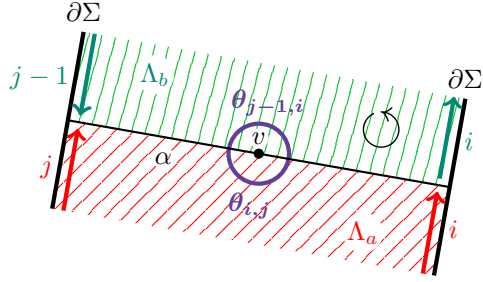
\begin{figure}[htb]
                \centering
                \begin{tikzpicture}[every node/.style={rectangle,draw=none,fill=none},scale=1.7,rotate=80]
                    \foreach \i [evaluate={\x={.7*(\i-1)}}] in {0,1,2}{
                        \coordinate (a\i) at (\x,0);
                        \coordinate (b\i) at (\x,3);}
                    \coordinate (v) at (barycentric cs:a1=1,b1=1);
                    
                    \draw [draw=none,pattern={Lines[angle=\angred,distance=\spr]},pattern color=red]
                        (a0) -- (a1) -- (b1) -- (b0) -- cycle;
                    \draw [draw=none,pattern={Lines[angle=\anggrn,distance=\spr]},pattern color=PineGreen]
                        (a2) -- (a1) -- (b1) -- (b2) -- cycle;
                    
                    \draw node [draw,circle,ultra thick,fill=none,inner sep=8pt,RoyalPurple] at (v) {};
                    
                    \draw [ultra thick] (a0) -- (a2) node [above] {$\partial\Sigma$}
                        (b0) -- (b2) node [above] {$\partial\Sigma$};
                    \draw [thick] (a1) -- (b1);
                    
                    \begin{scope}[ultra thick,shorten >=2.5pt]
                        \draw [red,->] ([yshift=2.5pt]a0) -- ([yshift=2.5pt]a1)
                            node [midway,right,inner sep=6pt] {$i$};
                        \draw [red,->] ([yshift=-2.5pt]b0) -- ([yshift=-2.5pt]b1)
                            node [midway,left,inner sep=6pt] {$j$};
                        \draw [PineGreen,<-] ([yshift=2.5pt]a2) -- ([yshift=2.5pt]a1)
                            node [midway,right,inner sep=6pt] {$i$};
                        \draw [PineGreen,->] ([yshift=-2.5pt]b2) -- ([yshift=-2.5pt]b1)
                            node [midway,left,inner sep=6pt] {$j-1$};
                    \end{scope}
                    
                    \draw node[circle,draw,fill=black,inner sep=1.2pt,
                        label={[circle,draw=none,fill=white,inner sep=.25pt]above:$v$}] at (v) {};
                    \draw node[
                        label={[circle,draw=none,fill=white,inner sep=.25pt]below:$\alpha$}] at (barycentric cs:a1=.25,b1=.75) {};
                        
                    \draw node at (barycentric cs:a0=1.4,b0=1.4,a1=1,b1=1) {\color{RoyalPurple}{$\bm{\theta_{i,j}}$}};
                    \draw node at (barycentric cs:a2=1.4,b2=1.4,a1=1,b1=1) {\color{RoyalPurple}{$\bm{\theta_{j-1,i}}$}};
                    
                    \draw node [circle,fill=white,draw=none,inner sep=.2pt] at (barycentric cs:a0=8,b0=2,a1=4,b1=1) {\color{red}{$\Lambda_a$}};
                    \draw node [circle,fill=white,draw=none,inner sep=.2pt] at (barycentric cs:a2=2,b2=8,a1=1,b1=4) {\color{PineGreen}{$\Lambda_b$}};
                    \draw node [inner sep=.2pt] at (barycentric cs:a2=4,b2=1,a1=4,b1=1) {\Huge$\circlearrowleft$};
                \end{tikzpicture}
                \caption{Estimating the curvature of transition vertices.}\label{fig:trans-vrtx}
            \end{figure}
            
            It follows that
            \[
                \kappa(v)=2\pi-\angle_{\tot}^{\Lambda_a}(v)-\angle_{\tot}^{\Lambda_b}(v)=2\pi-\theta_{i,j}-\theta_{j-1,i}.
            \]
            If $j\neq1$, then we have $i\geq j$ if and only if $j-1<i$, so that $\theta_{i,j}=2\pi$ if and only if $\theta_{j-1,i}=0$ (see $\left(\ref{eq:def-angle}\right)$), and therefore $\kappa(v)=0$.
            
            On the other hand, if $j=1$, then $j-1=\ell$, and we have $\theta_{i,j}=\theta_{j-1,i}=2\pi$ and $\kappa(v)=-2\pi$.
            
            We now turn our attention to vertex discs.
            By definition, each vertex disc $\Delta$ is a topological disc, with boundary alternating between transition arcs and edges of $\partial\Sigma$ (which are not boundary arcs since they do not bound a stripe).
            The successive transition arcs of $\partial\Delta$ alternatively bound an $a$-region $\Lambda_a^{(1)}$, then a $b$-region $\Lambda_b^{(1)}$, then another $a$-region $\Lambda_a^{(2)}$, etc.
            Here, we are ordering the transition arcs in clockwise order --- i.e. in the order opposite to the orientation of $\partial\Delta$.
            Hence there is an even number of transition arcs, say $2r$, so that the regions adjacent to $\Delta$ are (in clockwise order) $\Lambda_a^{(1)},\Lambda_b^{(1)},\dots,\Lambda_a^{(r)},\Lambda_b^{(r)}$.
            See Figure \ref{fig:cell-disc}, where $r=2$.
            \begin{figure}[htb]
                \centering
                \begin{tikzpicture}[every node/.style={rectangle,draw=none,fill=none},scale=3.8,rotate=0]
                    \foreach \i [evaluate={\t={360*(\i+.5)/8}}] in {1,...,8}{
                        \coordinate (a\i) at ({cos(\t)},{sin(\t)});
                        \coordinate (ap\i) at ({.8*cos(\t)},{.8*sin(\t)});}
                    \foreach \i [evaluate={\t={360*(\i-.5)/4}}] in {1,...,4}{
                        \coordinate (lab\i) at ({.7*cos(\t)},{.7*sin(\t)});}
                    
                    \coordinate (b1) at ({cos(360*1.5/8)+.3},{sin(360*1.5/8)});
                    \coordinate (b2) at ({cos(360*2.5/8)-.3},{sin(360*2.5/8)});
                    \coordinate (b3) at ({cos(360*3.5/8)},{sin(360*3.5/8)+.3});
                    \coordinate (b4) at ({cos(360*4.5/8)},{sin(360*4.5/8)-.3});
                    \coordinate (b5) at ({cos(360*5.5/8)-.3},{sin(360*5.5/8)});
                    \coordinate (b6) at ({cos(360*6.5/8)+.3},{sin(360*6.5/8)});
                    \coordinate (b7) at ({cos(360*7.5/8)},{sin(360*7.5/8)-.3});
                    \coordinate (b8) at ({cos(360*8.5/8)},{sin(360*8.5/8)+.3});
                    
                    \draw [draw=none,pattern={Lines[angle=\anggrn,distance=\spr]},pattern color=PineGreen]
                        (a2) -- (b2) -- (b3) -- (a3) -- cycle;
                    \draw node [circle,fill=white,draw=none,inner sep=.1pt] at (barycentric cs:a2=1,b2=2,a3=1,b3=2) {\color{PineGreen}{$\Lambda_b^{(1)}$}};
                    \draw [draw=none,pattern={Lines[angle=\angred,distance=\spr]},pattern color=red]
                        (a4) -- (b4) -- (b5) -- (a5) -- cycle;
                    \draw node [circle,fill=white,draw=none,inner sep=.1pt] at (barycentric cs:a4=1,b4=2,a5=1,b5=2) {\color{red}{$\Lambda_a^{(1)}$}};
                    \draw [draw=none,pattern={Lines[angle=\anggrn,distance=\spr]},pattern color=PineGreen]
                        (a6) -- (b6) -- (b7) -- (a7) -- cycle;
                    \draw node [circle,fill=white,draw=none,inner sep=.1pt] at (barycentric cs:a6=1,b6=2,a7=1,b7=2) {\color{PineGreen}{$\Lambda_b^{(2)}$}};
                    \draw [draw=none,pattern={Lines[angle=\angred,distance=\spr]},pattern color=red]
                        (a8) -- (b8) -- (b1) -- (a1) -- cycle;
                    \draw node [circle,fill=white,draw=none,inner sep=.1pt] at (barycentric cs:a8=1,b8=2,a1=1,b1=2) {\color{red}{$\Lambda_a^{(2)}$}};
                            
                    \draw [thick,fill=gray!35] (a8) \foreach \i in {1,...,7} {-- (a\i)} -- cycle;
                    \foreach \i [evaluate={\j={\i+1}},evaluate={\h={int(mod(\i+6,8)+1)}}] in {1,3,5,7}{
                        \draw (a\i) -- (a\h) coordinate [midway] (v\i);
                        \draw node[circle,draw,fill=black,inner sep=1.2pt] at (v\i) {};
                        \begin{scope}
                            \clip (a8) \foreach \l in {1,...,7} {-- (a\l)} -- cycle;
                            \draw node [draw,circle,ultra thick,fill=none,inner sep=7pt,RoyalPurple] at (v\i) {};
                            \draw node [draw,circle,ultra thick,fill=none,inner sep=7pt,RoyalPurple] at (a\i) {};
                            \draw node [draw,circle,ultra thick,fill=none,inner sep=7pt,RoyalPurple] at (a\j) {};
                            \draw node at (ap\i) {\color{RoyalPurple}{$\bm{\pi/2}$}};
                            \draw node at (ap\j) {\color{RoyalPurple}{$\bm{\pi/2}$}};
                        \end{scope}
                        \draw [ultra thick] (b\i) -- (b\j);}
                    
                    \begin{scope}[ultra thick,shorten >=5pt]
                        \draw [red,->] ([yshift=-1pt]b1) -- ([yshift=-1pt]a1)
                            node [midway,above,inner sep=6pt] {$i_2$};
                        \draw [PineGreen,->] ([yshift=-1pt]b2) -- ([yshift=-1pt]a2)
                            node [midway,above,inner sep=6pt] {$i_2$};
                        \draw [PineGreen,->] ([xshift=1pt]b3) -- ([xshift=1pt]a3)
                            node [midway,left,inner sep=6pt] {$j_1-1$};
                        \draw [red,<-] ([xshift=1pt]b4) -- ([xshift=1pt]a4)
                            node [midway,left,inner sep=6pt] {$j_1$};
                        \draw [red,<-] ([yshift=1pt]b5) -- ([yshift=1pt]a5)
                            node [midway,below,inner sep=6pt] {$i_1$};
                        \draw [PineGreen,->] ([yshift=1pt]b6) -- ([yshift=1pt]a6)
                            node [midway,below,inner sep=6pt] {$i_1$};
                        \draw [PineGreen,<-] ([xshift=-1pt]b7) -- ([xshift=-1pt]a7)
                            node [midway,right,inner sep=6pt] {$j_2-1$};
                        \draw [red,->] ([xshift=-1pt]b8) -- ([xshift=-1pt]a8)
                            node [midway,right,inner sep=6pt] {$j_2$};
                    \end{scope}
                    
                    \draw node at (lab1) {\color{RoyalPurple}{$\bm{2\pi-\theta_{i_2,j_2}}$}};
                    \draw node at (lab2) {\color{RoyalPurple}{$\bm{2\pi-\theta_{j_1-1,i_2}}$}};
                    \draw node at (lab3) {\color{RoyalPurple}{$\bm{2\pi-\theta_{i_1,j_1}}$}};
                    \draw node at (lab4) {\color{RoyalPurple}{$\bm{2\pi-\theta_{j_2-1,i_1}}$}};
                    
                    \draw node at (-.25,0) {\LARGE$\Delta$};
                    \draw node at (.25,0) {\Huge$\circlearrowleft$};
                    
                    \foreach \i [evaluate={\t={360*\i/4}}] in {1,...,4}{
                        \draw node at ({cos(\t)},{sin(\t)}) {$\partial\Sigma$};}
                \end{tikzpicture}
                \caption{An octagonal vertex disc.}\label{fig:cell-disc}
            \end{figure}
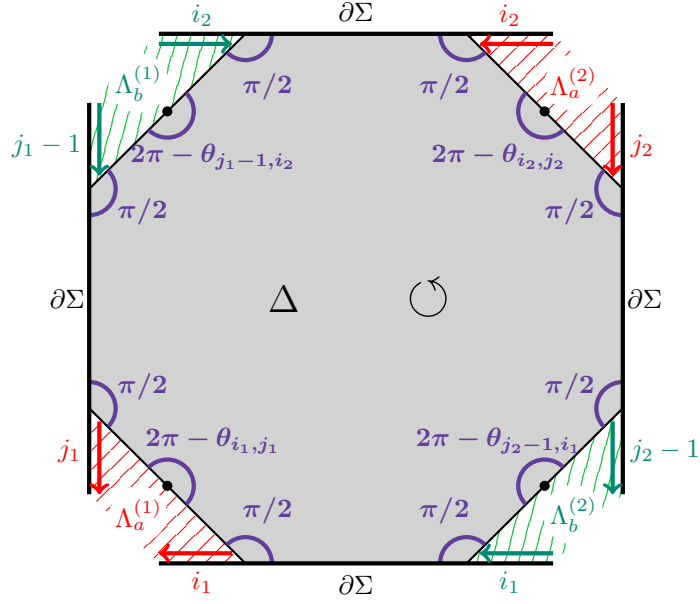
            In $\Lambda_a^{(k)}$, let $i_k$ and $j_k$ be the indices of the boundary arcs preceding and succeeding the transition arc bounding $\Delta$.
            Then in $\Lambda_b^{(k)}$, the indices of the boundary arcs preceding and succeeding the transition arc bounding $\Delta$ must be $j_k-1$ and $i_{k+1}$ (with the convention $i_{r+1}=i_1$).
            Again, we refer the reader to Figure \ref{fig:cell-disc} for an illustration of these notations.
            
            Now $\Delta$ has one transition vertex and two arc endpoints on each transition arc, and each arc endpoint has a total angle of $\pi/2$ in $\Delta$.
            Therefore, the interior curvature of $\Delta$ is
            \begin{equation}\label{eq:kappa-int-cell}
                \kappa_{\intr}(\Delta)=2\pi-\sum_{k=1}^r\left(\theta_{i_k,j_k}+\theta_{j_k-1,i_{k+1}}\right).
            \end{equation}
            The assumption that $\Sigma$ is disc-free implies that $r\geq1$.
            From $\left(\ref{eq:kappa-int-cell}\right)$, observe first that $\kappa_{\intr}(\Delta)\leq0$.
            Indeed, if $\kappa_{\intr}(\Delta)$ were positive, then each term in the sum would have to equal zero, so that we would have
            \[
                i_k<j_k\quad\textrm{and}\quad j_k-1<i_{k+1}
            \]
            for all $k$, implying that $i_1<j_1\leq i_2<j_2\leq\cdots\leq i_r<j_r\leq i_1$, which is impossible.
            
            Further, note that each $k\in\left\{1,\dots, r\right\}$ such that $j_k=1$ adds an extra contribution of $-2\pi$ to the right-hand side of $\left(\ref{eq:kappa-int-cell}\right)$.
            
            Wrapping everything up, the above computations show that transition vertices and vertex discs have non-positive (interior) curvature; moreover, every $a$-boundary arc $\alpha$ with index $1$ will contribute $-2\pi$ to the curvature
            \begin{itemize}
                \item Of a transition vertex if $\alpha$ is preceded by a $b$-boundary arc along $\partial\Sigma$, as in Figure \ref{fig:trans-vrtx} with $j=1$, or
                \item Of a vertex disc if $\alpha$ is preceded by an edge that is part of a vertex disc, as in Figure \ref{fig:cell-disc} with for instance $j_1=1$.
            \end{itemize}
            Since the indices along each component of $\partial\Sigma$ read $1,1,\dots,\ell,\ell$, alternating between $a$- and $b$-boundary arcs, with a total of $n(\Sigma)$ repetitions of this pattern across all boundary components of $\Sigma$, there are in total $n(\Sigma)$ $a$-boundary arcs with index $1$, and their total contribution to the curvature of transition vertices and vertex discs is at most $-2\pi\cdot n(\Sigma)$ as wanted.
        \end{proof}
        
        Using the decomposition of $\Sigma$ into vertex discs and $a$- and $b$-regions to compute its total curvature via Corollary \ref{cor:surf-dec} now yields
        \begin{align*}
            \kappa(\Sigma)&=\sum_{\substack{\textrm{$v$ arc}\\\textrm{endpoint}}}\overbrace{\kappa(v)}^{=0}
            +\overbrace{\sum_{\substack{\textrm{$v$ transition}\\\textrm{vertex}}}\kappa(v)
            +\sum_{\substack{\textrm{$\Delta$ vertex}\\\textrm{disc}}}\kappa_{\intr}(\Delta)}^{\leq-2\pi\cdot n(\Sigma)}
            +\sum_{\substack{\textrm{$\Lambda$ $a$- or}\\\textrm{$b$-region}}}\overbrace{\kappa_{\intr}(\Lambda)}^{\leq0}\\
            &\leq-2\pi\cdot n(\Sigma).\qedhere
        \end{align*}
    \end{proof}
    
\bibliography{PhD-Refs.bib}

\end{document}